 \newtheorem{corollary}{Corollary}
\newtheorem{theorem}{Theorem}
\newtheorem{proposition}{Proposition}
\newtheorem{example}{Example}
\newtheorem{remark}{Remark}
\begin{document}

\begin{frontmatter}

\title{Decay properties for functions of matrices over $C^*$-algebras}

\author[emory]{Michele Benzi\fnref{grant}}
\ead{benzi@mathcs.emory.edu}

\author[xlim]{Paola Boito}
\ead{paola.boito@unilim.fr}

\fntext[grant]{Work supported in part by NSF Grant DMS 1115692.}

\address[emory]{Department of Mathematics and Computer Science,
Emory University, Atlanta, GA 30322, USA. }
\address[xlim]{Equipe Calcul Formel, DMI-XLIM UMR 7252 Universit\'e de Limoges - CNRS,
123 avenue Albert Thomas, 87060 Limoges Cedex, France.}

\begin{abstract}
We extend previous results on the exponential off-diagonal decay
of the entries of analytic functions of banded and sparse
matrices to the case where the matrix entries are elements
of a $C^*$-algebra.
\end{abstract}
  
\begin{keyword} matrix functions, $C^*$-algebras, exponential decay,
sparse matrices, graphs, functional calculus
\end{keyword}
  
\end{frontmatter}

  

\begin{abstract}
We extend previous results on the exponential off-diagonal decay
of the entries of analytic functions of banded and sparse
matrices to the case where the matrix entries are elements
of a $C^*$-algebra.
\end{abstract}



\section{Introduction}
Decay properties of inverses, exponentials and other
functions of band or sparse
matrices over $\mathbb{R}$ or $\mathbb{C}$ have been investigated
by several authors in recent years
\cite{BG99,BR07,DMS84,highambook,iserles,jaffard,KSW,mastronardi}.
Such properties play an important role in various
applications including electronic structure
computations in quantum chemistry \cite{BBR13,BM12},
quantum information theory \cite{CE06,CEPD06,ECP10,SCW06},
high-dimensional statistics \cite{Aune}, random matrix
theory \cite{Molinari}
and numerical analysis \cite{BC13,ye13}, to name a few.

Answering a question posed by P.-L.~Giscard and coworkers \cite{giscard},
we consider generalizations of existing decay estimates
to functions of matrices with entries in more general
algebraic structures than the familiar fields $\mathbb{R}$
or $\mathbb{C}$. In particular, we propose extensions
to functions of matrices with entries from
the following algebras:

\begin{enumerate}
\item Commutative algebras of complex-valued continuous functions;
\item Non-commutative algebras of bounded operators on
a complex Hilbert space;
\item The real division algebra $\mathbb{H}$ of quaternions.
\end{enumerate}

The theory of complex $C^*$-algebras provides the natural abstract setting
for the desired extensions \cite{KRI,MC,Rudin}. 
Matrices over such algebras arise naturally in various application
areas, including parametrized linear systems and eigenproblems
\cite{CGI10,Tobler}, differential equations \cite{CT06},
generalized moment problems \cite{Osipov},
control theory \cite{Bunce,Curtain1,Curtain2}, and quantum
physics \cite{Berezansky71,Berezansky98,Doran}. The study of matrices
over $C^*$-algebras is also of independent mathematical interest;
see, e.g., \cite{Grove,Kadison}.

Using the holomorphic functional
calculus, we establish exponential
off-diagonal
decay results for analytic functions of banded
$n\times n$
Hermitian matrices over $C^*$-algebras, both commutative and
non-commutative. Our decay estimates are expressed
in the form of computable bounds on the norms of the entries
of $f(A)$ where $A=[a_{ij}]$ is an $n\times n$ matrix with entries
$a_{ij} = a_{ji}^*$ in
a $C^*$-algebra $\mathcal{A}_0$ and $f$ is an analytic function defined on a
suitable open subset of $\mathbb{C}$
 containing the spectrum of $A$, viewed as an element of the
$C^*$-algebra $\mathcal{M}_n(\mathcal{A}_0)$ ($=\mathcal{A}_0^{n\times n}$). 
The interesting case
is when the constants in the bounds do not depend on $n$.
Functions of more general sparse matrices over $\mathcal{A}_0$
will also be discussed.

For the case of functions of $n\times n$ quaternion matrices, we identify 
the set of such matrices with a 
(real) subalgebra of $\mathbb{C}^{2n\times 2n}$ and treat them as a
special type of complex block matrices; as we will see, this will
impose some restrictions on the type of functions that we are
allowed to consider.

\section{Background on $C^*$-algebras}

In this section we provide definitions and notations used throughout
the remainder of the paper, and recall some of the fundamental results
from the theory of $C^*$-algebras. The reader is referred to \cite{MC,Rudin}
for concise introductions to this theory and to \cite{KRI} for a 
more systematic treatment.

Recall that a {\em Banach algebra} is a complex algebra $\mathcal{A}_0$
with a norm making $\mathcal{A}_0$ into a Banach space and satisfying
$$\|ab\| \le \|a\|\|b\|$$
for all $a,b\in \mathcal{A}_0$.
In this paper we consider only {\em unital} Banach algebras, i.e., 
algebras with a multiplicative unit $I$ with $\|I\|=1$.

An {\em involution} on a Banach algebra $\mathcal{A}_0$ is a map
$a\mapsto a^*$ of $\mathcal{A}_0$ into itself satisfying
\begin{itemize}
\item [(i)] $(a^*)^* = a$
\item [(ii)] $(ab)^* = b^*a^*$
\item [(iii)] $(\lambda a + b)^* = \overline{\lambda} a^* + b^*$
\end{itemize}
for all $a,b\in \mathcal{A}_0$ and $\lambda \in \mathbb{C}$.
A {\em $C^*$-algebra} is a Banach algebra with an involution such that
the $C^*$-{\em identity}
$$ \|a^*a\| = \|a\|^2$$
holds for all $a\in \mathcal{A}_0$.
Note that we do not make any assumption on whether $\mathcal{A}_0$
is commutative or not.

Basic examples of $C^*$-algebras are:

\begin{enumerate}
\item The commutative algebra $C(\mathcal{X})$ of all continuous complex-valued functions
on a compact Hausdorff space $\mathcal{X}$. Here the addition and multiplication 
operations are defined pointwise, and the norm is given by 
$\|f\|_{\infty} = \max_{t\in \mathcal{X}} |f(t)|$.
The involution on $C(\mathcal{X})$ maps each function $f$ to its complex conjugate $f^*$,
defined by $f^*(t) = \overline{f(t)}$ for all $t\in \mathcal{X}$.
\item The algebra $\mathcal{B}(\mathcal{H})$ of all bounded linear
operators on a complex Hilbert space $\mathcal{H}$, with the operator
norm $\|T\|_{op} = \sup \|Tx\|_{\mathcal{H}} /\|x\|_{\mathcal{H}}$, 
where the supremum is taken over all nonzero $x\in {\mathcal{H}}$.
The involution on $\mathcal{B}(\mathcal{H})$ maps each bounded linear
operator $T$ on $\mathcal{H}$ to its adjoint, $T^*$.
\end{enumerate}

Note that the second example contains as a special case the 
algebra $\mathcal{M}_n (\mathbb{C})$ ($= \mathbb{C}^{k\times k}$)
of all $k\times k$ matrices with
complex entries, with the norm being the usual spectral norm
and the involution mapping each matrix $A=[a_{ij}]$ to its
Hermitian conjugate $A^* = [\, \overline{a_{ji}}\, ]$. This algebra
is noncommutative for $k\ge 2$.

Examples 1 and 2 above provide, in a precise sense, 
the \lq\lq only\rq\rq\ examples of $C^*$-algebras. Indeed,
every (unital) commutative $C^*$-algebra admits a faithful representation
onto an algebra of the form $C(\mathcal{X})$ for a suitable (and essentially
unique) compact Hausdorff space $\mathcal{X}$; and, similarly, every unital
(possibly noncommutative) $C^*$-algebra can be faithfully represented as
a norm-closed subalgebra of $\mathcal{B}(\mathcal{H})$ for a suitable
complex Hilbert space $\mathcal{H}$.

More precisely, a map $\phi$ between two
$C^*$-algebras is a $*$-{\em homomorphism} if $\phi$ is linear,
multiplicative, and such that $\phi(a^*) = \phi(a)^*$; a
$*$-{\em isomorphism} is a bijective $*$-homomorphism. 
Two $C^*$-algebras are said to be {\em isometrically
$*$-isomorphic} if there is a norm-preserving $*$-isomorphism
between them, in which case they are indistinguishable as
$C^*$-algebras.  A {\em $*$-subalgebra} $\mathcal{B}_0$ of a 
$C^*$-algebra is a subalgebra that is {\rm $*$-closed}, i.e.,
$a\in \mathcal{B}_0$
implies $a^*\in \mathcal{B}_0$. Finally, a $C^*$-subalgebra is a
norm-closed $*$-subalgebra of a $C^*$-algebra.   
The following two results are classical \cite{G,GN}. 

\begin{theorem}\label{thm_gelfand} \textnormal{(Gelfand)}
Let $\mathcal{A}_0$ be a commutative $C^*$-algebra. Then there is
a compact Hausdorff space $\mathcal{X}$ such that $\mathcal{A}_0$ is 
isometrically $*$-isomorphic to $C(\mathcal{X})$. If $\mathcal{Y}$ is another
compact Hausdorff space such that $\mathcal{A}_0$ is
isometrically $*$-isomorphic to $C(\mathcal{Y})$, then $\mathcal{X}$ 
and $\mathcal{Y}$
are necessarily homeomorphic.
\end{theorem}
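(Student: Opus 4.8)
The plan is to construct $\mathcal{X}$ as the \emph{character space} of $\mathcal{A}_0$ and to show that the Gelfand transform implements the desired isometric $*$-isomorphism. First I would let $\mathcal{X}=\Delta(\mathcal{A}_0)$ denote the set of all nonzero multiplicative linear functionals $\varphi\colon\mathcal{A}_0\to\mathbb{C}$. Since $\mathcal{A}_0$ is unital, any such $\varphi$ satisfies $\varphi(I)=1$, and every character on a unital Banach algebra is automatically continuous with $\|\varphi\|=1$, so $\mathcal{X}$ is a subset of the closed unit ball of the dual $\mathcal{A}_0^*$. Equipped with the weak-$*$ topology, $\mathcal{X}$ is Hausdorff, and it is weak-$*$ closed in that ball, hence compact by Banach--Alaoglu. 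Define $\Gamma\colon\mathcal{A}_0\to C(\mathcal{X})$ by $\Gamma(a)=\hat a$ with $\hat a(\varphi)=\varphi(a)$; by construction each $\hat a$ is continuous, and $\Gamma$ is a unital algebra homomorphism.

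The next step is the standard Banach-algebra identification $\sigma(a)=\{\varphi(a):\varphi\in\mathcal{X}\}$, valid because in a commutative unital Banach algebra an element is invertible precisely when it lies in no maximal ideal, every maximal ideal is the kernel of a character (the quotient being a Banach division algebra, hence $\mathbb{C}$ by Gelfand--Mazur), and $\varphi(a)\in\sigma(a)$ always. Consequently $\|\hat a\|_\infty=r(a)$, the spectral radius. Now the $C^*$-identity enters. For $a=a^*$ one has $\|a^2\|=\|a^*a\|=\|a\|^2$, hence inductively $\|a^{2^k}\|=\|a\|^{2^k}$, so the spectral radius formula gives $r(a)=\|a\|$. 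Moreover characters are real-valued on self-adjoint elements: if $b=b^*$, testing $\|b+\mathrm{i}tI\|^2=\|b^2+t^2I\|\le\|b\|^2+t^2$ against $|\varphi(b+\mathrm{i}tI)|^2$ and letting $t\to\pm\infty$ forces the imaginary part of $\varphi(b)$ to vanish. Writing a general $a=b+\mathrm{i}c$ with $b=\tfrac12(a+a^*)$ and $c=\tfrac1{2\mathrm{i}}(a-a^*)$ self-adjoint then yields $\widehat{a^*}=\overline{\hat a}$, so $\Gamma$ is a $*$-homomorphism, and
\[
\|\hat a\|_\infty^2=\bigl\|\,|\hat a|^2\,\bigr\|_\infty=\|\widehat{a^*a}\|_\infty=r(a^*a)=\|a^*a\|=\|a\|^2,
\]
i.e.\ $\Gamma$ is isometric, hence injective, and its image is complete and therefore closed in $C(\mathcal{X})$.

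For surjectivity I would invoke the Stone--Weierstrass theorem: $\Gamma(\mathcal{A}_0)$ is a closed subalgebra of $C(\mathcal{X})$ that contains the constants (the image of $I$), is closed under complex conjugation (by the $*$-property just established), and separates the points of $\mathcal{X}$ (two distinct characters differ on some $a$, so $\hat a$ separates them); hence $\Gamma(\mathcal{A}_0)=C(\mathcal{X})$. This proves the existence assertion. For uniqueness, observe that an isometric $*$-isomorphism $\mathcal{A}_0\cong C(\mathcal{Y})$ induces a homeomorphism of character spaces $\Delta(\mathcal{A}_0)\cong\Delta(C(\mathcal{Y}))$, and that $\Delta(C(\mathcal{Y}))$ is canonically homeomorphic to $\mathcal{Y}$ via point evaluations $y\mapsto\delta_y$ — here one uses that $\mathcal{Y}$ is compact Hausdorff, so distinct points are separated by continuous functions and, by compactness, every maximal ideal of $C(\mathcal{Y})$ has the form $\{f:f(y)=0\}$ for a unique $y$. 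Thus if $\mathcal{A}_0$ is isometrically $*$-isomorphic to both $C(\mathcal{X})$ and $C(\mathcal{Y})$, then $\mathcal{X}\cong\Delta(C(\mathcal{X}))\cong\Delta(\mathcal{A}_0)\cong\Delta(C(\mathcal{Y}))\cong\mathcal{Y}$.

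The main obstacle is the passage from \lq\lq homomorphism\rq\rq\ to \lq\lq $*$-homomorphism\rq\rq, i.e.\ proving that characters are real on self-adjoint elements: this is the one place where the $C^*$-identity is genuinely indispensable (a general commutative Banach algebra need not admit such a representation), and it is exactly what upgrades $\Gamma$ from a contractive homomorphism to an isometric $*$-isomorphism. The remaining ingredients — Banach--Alaoglu, Gelfand--Mazur, the spectral radius formula, and Stone--Weierstrass — are classical and can simply be quoted.
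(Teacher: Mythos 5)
Your proposal is correct and complete: it is the standard Gelfand-transform argument (character space with the weak-$*$ topology, Gelfand--Mazur, the $C^*$-identity forcing characters to be $*$-preserving and the transform isometric, Stone--Weierstrass for surjectivity, and identification of $\Delta(C(\mathcal{Y}))$ with $\mathcal{Y}$ for uniqueness). The paper does not prove this theorem at all --- it simply quotes it as classical with references to Gelfand and Gelfand--Naimark --- and your argument is precisely the classical proof those sources supply, so there is nothing to reconcile.
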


\begin{theorem}\label{thm_gn} \textnormal{(Gelfand--Naimark)}
Let $\mathcal{A}_0$ be a $C^*$-algebra. Then there is a
complex Hilbert space $\mathcal{H}$ such that $\mathcal{A}_0$
is isometrically $*$-isomorphic to a $C^*$-subalgebra
of $\mathcal{B}(\mathcal{H})$.
\end{theorem}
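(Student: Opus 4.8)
The plan is to realize $\mathcal{A}_0$ concretely through the \emph{Gelfand--Naimark--Segal (GNS) construction}: first attach a cyclic $*$-representation to each state of $\mathcal{A}_0$, and then take a direct sum over enough states to obtain a faithful isometric representation.

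First I would recall that a \emph{state} on $\mathcal{A}_0$ is a positive linear functional $\varphi$ (so $\varphi(a^*a)\ge 0$ for all $a$) normalized by $\|\varphi\|=\varphi(I)=1$. Given a state $\varphi$, the sesquilinear form $\langle a,b\rangle_\varphi:=\varphi(b^*a)$ is a positive semidefinite pre-inner product on $\mathcal{A}_0$; by the associated Cauchy--Schwarz inequality its null space $N_\varphi=\{a\in\mathcal{A}_0:\varphi(a^*a)=0\}$ is a closed left ideal, so the form descends to a genuine inner product on the quotient vector space $\mathcal{A}_0/N_\varphi$, whose completion I denote $\mathcal{H}_\varphi$. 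Because $N_\varphi$ is a left ideal, left multiplication passes to the quotient, and the estimate $\varphi\big((ab)^*(ab)\big)\le\|a\|^2\,\varphi(b^*b)$ --- obtained by applying $\varphi$ to the positive element $b^*\big(\|a\|^2I-a^*a\big)b$ --- shows this action is bounded with norm at most $\|a\|$. Hence it extends to an operator $\pi_\varphi(a)\in\mathcal{B}(\mathcal{H}_\varphi)$, and one checks directly that $\pi_\varphi\colon\mathcal{A}_0\to\mathcal{B}(\mathcal{H}_\varphi)$ is a contractive $*$-homomorphism, the relation $\pi_\varphi(a^*)=\pi_\varphi(a)^*$ following from the identity $\langle\widehat{a^*b},\widehat c\,\rangle_\varphi=\varphi(c^*a^*b)=\langle\widehat b,\widehat{ac}\,\rangle_\varphi$, where $\widehat x$ denotes the class of $x$ in $\mathcal{H}_\varphi$.

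Next I would assemble the \emph{universal representation} $\pi:=\bigoplus_\varphi\pi_\varphi$ acting on the Hilbert-space direct sum $\mathcal{H}:=\bigoplus_\varphi\mathcal{H}_\varphi$ over all states $\varphi$ of $\mathcal{A}_0$. This $\pi$ is again a contractive $*$-homomorphism, so the whole theorem reduces to showing that $\pi$ is \emph{isometric}: an isometric $*$-homomorphism is injective and, since $\mathcal{A}_0$ is complete, has norm-closed image, hence is an isometric $*$-isomorphism of $\mathcal{A}_0$ onto a $C^*$-subalgebra of $\mathcal{B}(\mathcal{H})$. Using the $C^*$-identity it is enough to prove $\|\pi(a^*a)\|\ge\|a^*a\|$, i.e.\ to handle the positive element $b:=a^*a$. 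For this I would invoke the existence of a state $\varphi_0$ with $\varphi_0(b)=\|b\|$; then, with $\widehat I$ the class of $I$ in $\mathcal{H}_{\varphi_0}$, one has $\|\widehat I\,\|^2=\varphi_0(I)=1$ and $\|\pi_{\varphi_0}(a)\widehat I\,\|^2=\varphi_0(a^*a)=\|b\|=\|a\|^2$, so $\|\pi(a)\|\ge\|\pi_{\varphi_0}(a)\|\ge\|a\|$; combined with contractivity this gives $\|\pi(a)\|=\|a\|$.

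The main obstacle is precisely the production of that norming state: one must know that every positive $b\in\mathcal{A}_0$ admits a state $\varphi_0$ with $\varphi_0(b)=\|b\|$. I would obtain it by passing to the commutative $C^*$-subalgebra $\mathcal{C}$ generated by $b$ and $I$, using Gelfand's theorem (Theorem~\ref{thm_gelfand}) to identify $\mathcal{C}$ isometrically with some $C(\mathcal{X})$, letting the function corresponding to $b$ attain its maximum $\|b\|$ at a point $t_0\in\mathcal{X}$ (here one uses that for positive $b$ this maximum equals $\|b\|$, a spectral-radius fact resting again on the $C^*$-identity), taking $\varphi_0$ on $\mathcal{C}$ to be evaluation at $t_0$, and extending it to all of $\mathcal{A}_0$ by the Hahn--Banach theorem; the extension has the same norm $1$ and still sends $I$ to $1$, and a standard lemma says a bounded functional on a unital $C^*$-algebra with $\|\varphi\|=\varphi(I)$ is automatically positive, so $\varphi_0$ is a state. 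With this in hand, everything else --- the pre-inner product, the quotient, the boundedness of $\pi_\varphi$, and the direct sum --- is routine bookkeeping.
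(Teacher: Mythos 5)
The paper does not prove this theorem: it is stated as a classical result and referred to the literature (\cite{G,GN}, and the textbooks \cite{KRI,MC,Rudin}). Your argument is precisely the standard GNS/universal-representation proof found in those references, and it is correct: the cyclic representation attached to each state, the bound $\varphi\big((ab)^*(ab)\big)\le\|a\|^2\varphi(b^*b)$, the direct sum over all states, and the norming state obtained from Gelfand's theorem (Theorem~\ref{thm_gelfand}) on the commutative subalgebra generated by $b=a^*a$ and $I$, followed by a Hahn--Banach extension and the lemma that $\|\varphi\|=\varphi(I)$ forces positivity. The only points you compress --- that $b^*\big(\|a\|^2I-a^*a\big)b$ is positive (which needs the square root of a positive element, itself supplied by the functional calculus on a commutative subalgebra) and that $\|b\|$ lies in the spectrum of the positive element $b$ --- are standard facts and are consistent with the background material the paper assumes, so there is no gap.
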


We will also need the following definitions and facts.
An element $a\in \mathcal{A}_0$ of a $C^*$-algebra
is {\em unitary} if $aa^*=a^*a = I$, 
{\em Hermitian}
(or {\em self-adjoint}) if $a^* = a$,
{\em skew-Hermitian} if $a^* = -a$, {\em normal} if $aa^* = a^*a$. 
Clearly, unitary, Hermitian and skew-Hermitian elements are 
all normal. Any element $a\in \mathcal{A}_0$
can be written uniquely as $a = h_1 + {\rm i}\,h_2$ with $h_1,h_2$
Hermitian and $\rm i = \sqrt{-1}$. 

For any (complex) Banach algebra
$\mathcal{A}_0$, the {\em spectrum}
of an element $a \in \mathcal{A}_0$ is the set of all
$\lambda\in \mathbb{C}$ such that $\lambda I - a$ is
not invertible in $\mathcal{A}_0$. We denote the
spectrum of $a$ by $\sigma(a)$. For any $a\in \mathcal{A}_0$,
the spectrum $\sigma(a)$ is a non-empty compact subset
of $\mathbb{C}$ contained in the closed disk of radius
$r=\|a\|$ centered at $0$. 
 The complement 
$r(a) = \mathbb{C} \backslash \sigma(a)$ of the spectrum
of an element $a$ of a $C^*$-algebra is called
the {\em resolvent set} of $a$.
The {\em spectral radius}
of $a$ is defined as $\rho(a) = \max\{|\lambda| \,;\, \lambda\in \sigma(A)\}$. 
{\em Gelfand's formula} for the spectral radius \cite{G} states that
\begin{equation}\label{GF}
\rho(a) = \lim_{m\to \infty} \|a^m\|^{\frac{1}{m}}\,.
\end{equation}
Note that this identity contains the statement that the
above limit exists.

If $a \in \mathcal{A}_0$
(a $C^*$-algebra) is Hermitian, $\sigma(a)$ is
a subset of $\mathbb{R}$. 
If $a \in \mathcal{A}_0$ is normal (in particular,
Hermitian), then $\rho(a) = \|a\|$. This implies that
if $a$ is Hermitian, then either $-\|a\|\in \sigma(a)$
or $\|a\|\in \sigma(a)$. 
The spectrum of a skew-Hermitian element is purely imaginary,
and the spectrum of a unitary element 
is contained in the unit circle $S^1 = \{z\in \mathbb{C}\, ; \,
 |z|=1\}$.


An element $a\in \mathcal{A}_0$
is {\em nonnegative} if $a=a^*$ and the spectrum of
$a$ is contained in $\mathbb{R}_+$, the nonnegative 
real axis. Any linear combination with real nonnegative
coefficients of nonnegative elements of a $C^*$-algebra
is nonnegative; in other words, the set of all nonnegative
elements in a $C^*$-algebra $\mathcal{A}_0$ form a 
(nonnegative) {\em cone} in $\mathcal{A}_0$. For any 
$a\in \mathcal{A}_0$, $aa^*$ is
nonnegative, and $I + aa^*$ is invertible in $\mathcal{A}_0$. 
Furthermore, $\|a\| = \sqrt{\rho(a^*a)} = 
\sqrt{\rho(aa^*)}$, for any $a\in \mathcal{A_0}$.

Finally, we note that if $\|\cdot\|_*$ and $\|\cdot\|_{**}$
are two norms with respect to which $\mathcal{A}_0$ is a
$C^*$-algebra, then $\|\cdot\|_* = \|\cdot\|_{**}$. 

\section{Matrices over a $C^*$-algebra}
Let $\mathcal{A}_0$ be a $C^*$-algebra. 
Given a positive integer $n$, let $\mathcal{A} = \mathcal{M}_n(\mathcal{A}_0)$ 
be the set 
of $n\times n$ matrices with entries in $\mathcal{A}_0$. 
Observe that $\mathcal{A}$ has a natural $C^*$-algebra structure, 
with matrix addition and multiplication 
defined in the usual way 
(in terms, of course, of the corresponding operations on $\mathcal{A}_0$). 
The involution is naturally defined as follows:
given a matrix $A=[a_{ij}] \in \mathcal{A}$, the adjoint of $A$
is given by $A^* = [a_{ji}^*]$. The algebra $\mathcal{A}$ is
obviously unital, with unit
$$I_n = \left[ \begin{array}{cccc}
               I & 0 &\ldots  & 0 \\ 0 & \ddots & \ddots & \vdots \\
               \vdots  & \ddots & \ddots & 0 \\ 0 & \ldots & 0 & I
             \end{array}
      \right] 
$$
where $I$ is the unit of $\mathcal{A}_0$.
The definition of unitary, Hermitian,
skew-Hermitian and normal matrix are the obvious ones.

It follows from the Gelfand--Naimark representation theorem 
(Theorem \ref{thm_gn} above) that
each $A\in \mathcal{A}$ can be represented as a matrix $T_A$ of bounded
linear operators, where $T_A$ acts on the direct sum 
$\mathscr{H} = \mathcal{H} \oplus \cdots \oplus \mathcal{H}$ of $n$ copies of 
a suitable complex Hilbert space $\mathcal{H}$. 
This fact allows us to introduce
an operator norm on $\mathcal{A}$, defined as follows:
\begin{equation}\label{op_norm1}
\|A\|:= \sup_{\|x\|_{\mathscr{H}}=1} \|T_Ax\|_{\mathscr{H}}\,,
\end{equation}
where
$$\|x\|_{\mathscr{H}}:=\sqrt{\|x_1\|_{\mathcal{H}}^2 + 
                             \cdots +
                             \|x_n\|_{\mathcal{H}}^2}$$
is the norm of an element $x=(x_1,\ldots ,x_n)\in \mathscr{H}$.
Relative to this norm, $A\in \mathcal{A}$ is a $C^*$-algebra.
Note that $\mathcal{A}$ can also be identified with the tensor
product of $C^*$-algebras $\mathcal{A}_0\, \otimes \,  \mathcal{M}_n (\mathbb{C})$.

Similarly, Gelfand's theorem (Theorem \ref{thm_gelfand} above) implies that
if $\mathcal{A}_0$ is commutative, there is a compact Hausdorff space
$\mathcal{X}$ such that any $A\in \mathcal{A}$ can be identified with
a continuous matrix-valued function
$$A: \mathcal{X} \longrightarrow \mathcal{M}_n(\mathbb{C})\,.$$
In other words, $A$ can be represented as an $n\times n$ matrix of 
continuous, complex-valued
functions: $A = [a_{ij}(t)]$, with domain $\mathcal{X}$. 
The natural $C^*$-algebra
norm on $\mathcal{A}$, which can be identified with
$ C(\mathcal{X}) \otimes \mathcal{M}_n (\mathbb{C})$, is now the operator norm
\begin{equation}\label{op_norm2}
\|A\|:= \sup_{\|x\|=1} \|Ax\|\,,
\end{equation}
where $x = (x_1,\ldots ,x_n)\in [C(\mathcal{X})]^n$ has norm 
$\|x\|=\sqrt{\|x_1\|_{\infty}^2 + \cdots + \|x_n\|_{\infty}^2}$
with $\|x_i\|_{\infty}= \max_{t\in \mathcal{X}}|x_i(t)|$, for $1\le i\le n$.

Since $\mathcal{A}$ is a $C^*$-algebra, all the definitions and basic facts about the
spectrum remain valid for any matrix $A$ with entries in $\mathcal{A}_0$.
Thus, the spectrum $\sigma(A)$ of $A\in \mathcal{A}$ is the set of 
all $\lambda\in \mathbb{C}$
such that $\lambda I_n - A$ is not invertible in $\mathcal{A}$. 
If $0\in \sigma(A)$, we will also say that $A$ is {\em singular}. The
set $\sigma(A)$ is a nonempty compact subset of $\mathbb{C}$ completely
contained in the disk of radius $\|A\|$ centered at $0$. The definition
of spectral radius and Gelfand's formula (\ref{GF}) remain valid. 
Hermitian matrices
have real spectra, skew-Hermitian matrices have purely imaginary
spectra, unitary matrices have spectra contained in $S^1$, and so forth. 
Note, however, that it is not true in general that a normal matrix $A$ over a
$C^*$-algebra can be unitarily diagonalized \cite{Kadison}.

In general, it is difficult to estimate the spectrum of a matrix
$A=[a_{ij}]$ over a $C^*$-algebra. It will be useful for what follows
to introduce the {\em matricial norm} of $A$ \cite{ostrowski,robert}, 
which is defined as
the $n\times n$ real nonnegative matrix 
$$\hat A = \left[ \begin{array}{cccc}
               \|a_{11}\| & \|a_{12}\| &\ldots  & \|a_{1n}\| \\ 
               \|a_{21}\| & \|a_{22}\| & \ldots & \|a_{nn}\| \\
               \vdots  & \ddots & \ddots & \vdots \\ 
               \|a_{n1}\| & \|a_{n2}\| &\ldots  & \|a_{nn}\| 
             \end{array}
      \right] \,.
$$

The following result shows that we can obtain upper bounds on the
spectral radius and operator norm of a matrix $A$ over a $C^*$-algebra
in terms of the more easily computed corresponding quantities  
for $\hat A$. As usual, the symbol $\|\cdot\|_2$ denotes the
spectral norm of a matrix with real or complex entries.

\begin{theorem}\label{thm_matricial_norm}
For any $A\in \mathcal{A}$, the following inequalities hold:
\begin{enumerate}
\item $\|A\| \le \|\hat A\|_2$;
\item $\rho (A) \le \rho (\hat A)$.
\end{enumerate}
\end{theorem}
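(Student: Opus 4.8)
The plan is to prove the operator-norm bound $\|A\|\le \|\hat A\|_2$ directly from the definition \eqref{op_norm1}, and then to deduce the spectral-radius bound from it by applying the first inequality to powers of $A$ together with Gelfand's formula \eqref{GF}. By the Gelfand--Naimark theorem (Theorem~\ref{thm_gn}) we may regard each $a_{ij}\in\mathcal{A}_0$ as a bounded operator on a complex Hilbert space $\mathcal{H}$, so that $A$ acts as the operator matrix $T_A=[a_{ij}]$ on $\mathscr{H}=\mathcal{H}\oplus\cdots\oplus\mathcal{H}$ ($n$ copies), and $\|a_{ij}\|$ is the operator norm on $\mathcal{H}$.

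First I would fix $x=(x_1,\dots,x_n)\in\mathscr{H}$ with $\|x\|_{\mathscr{H}}=1$ and estimate the $i$-th component of $T_Ax$, which is $\sum_{j=1}^n a_{ij}x_j$. By the triangle inequality and submultiplicativity in $\mathcal{B}(\mathcal{H})$,
\begin{equation*}
\Bigl\|\sum_{j=1}^n a_{ij}x_j\Bigr\|_{\mathcal{H}}\le \sum_{j=1}^n \|a_{ij}\|\,\|x_j\|_{\mathcal{H}}=\bigl(\hat A\,\xi\bigr)_i,
\end{equation*}
where $\xi=(\|x_1\|_{\mathcal{H}},\dots,\|x_n\|_{\mathcal{H}})\in\mathbb{R}^n_+$ is the vector of component norms, and $\hat A\xi$ is an ordinary matrix-vector product with the nonnegative matrix $\hat A$. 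Squaring and summing over $i$ gives $\|T_Ax\|_{\mathscr{H}}^2\le \|\hat A\,\xi\|_2^2\le \|\hat A\|_2^2\,\|\xi\|_2^2$. Since $\|\xi\|_2^2=\sum_i\|x_i\|_{\mathcal{H}}^2=\|x\|_{\mathscr{H}}^2=1$, taking the supremum over such $x$ yields $\|A\|\le\|\hat A\|_2$, which is item~1. (The same computation works verbatim in the commutative case using \eqref{op_norm2}, but invoking Gelfand--Naimark gives a uniform argument.)

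For item~2, the key observation is that matricial norms are submultiplicative: for $A,B\in\mathcal{A}$ the $(i,j)$ entry of $AB$ is $\sum_k a_{ik}b_{kj}$, so $\|(AB)_{ij}\|\le\sum_k\|a_{ik}\|\,\|b_{kj}\|=(\hat A\hat B)_{ij}$, i.e. $\widehat{AB}\le \hat A\hat B$ entrywise; since both sides are nonnegative matrices, monotonicity of the spectral norm on entrywise-ordered nonnegative matrices gives $\|\widehat{AB}\|_2\le\|\hat A\hat B\|_2\le\|\hat A\|_2\|\hat B\|_2$. Iterating, $\widehat{A^m}\le\hat A^{\,m}$ entrywise, hence by item~1 and the same monotonicity, $\|A^m\|\le\|\widehat{A^m}\|_2\le\|\hat A^{\,m}\|_2$. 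Taking $m$-th roots and letting $m\to\infty$, Gelfand's formula \eqref{GF} applied in $\mathcal{A}$ on the left and in $\mathbb{C}^{n\times n}$ on the right gives $\rho(A)=\lim_m\|A^m\|^{1/m}\le\lim_m\|\hat A^{\,m}\|_2^{1/m}=\rho(\hat A)$, which is item~2.

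The only point requiring a little care — the main obstacle, such as it is — is the monotonicity step: if $0\le M\le N$ entrywise for real matrices, then $\|M\|_2\le\|N\|_2$. This is standard (for instance $\|M\|_2\le\|\,|M|\,\|_2$ where $|M|$ denotes the entrywise absolute value, and the Perron--Frobenius-type bound $\|M\|_2^2=\rho(M^TM)\le\rho(N^TN)=\|N\|_2^2$ since $0\le M^TM\le N^TN$ entrywise), so I would simply cite it. Everything else is triangle inequality, submultiplicativity, and a passage to the limit, so the proof is short.
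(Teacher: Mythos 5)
Your proof is correct and follows essentially the same route as the paper: item 1 by a component-wise triangle inequality reducing $\|T_Ax\|_{\mathscr{H}}$ to $\|\hat A\,\xi\|_2$ with $\xi$ the vector of component norms, and item 2 by combining item 1 with $\|\widehat{A^m}\|_2\le\|\hat A^{\,m}\|_2$ and Gelfand's formula on both sides. The only difference is that you spell out explicitly (via entrywise submultiplicativity $\widehat{AB}\le\hat A\hat B$ and monotonicity of $\|\cdot\|_2$ on nonnegative matrices) what the paper dismisses as a ``simple inductive argument,'' which is a welcome addition rather than a deviation.
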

\begin{proof}
To prove the first item, observe that 
$$\|A\| = \sup \|T_Ax\|_{\mathscr{H}}
= \sup \left [ \sum_{i=1}^n \left  \|\sum_{j=1}^n \pi (a_{ij}) x_j\right \|^2 
\right ]^\frac{1}{2}\,,
$$
where $\pi(a_{ij})$ is the Gelfand--Naimark representation of $a_{ij}\in \mathcal{A}_0$
as a bounded operator on $\cal H$, and
the sup is taken over all $n$-tuples $(x_1, x_2, \ldots ,x_n)\in
{\mathscr{H}}$ with $\|x_1\|_\mathcal{H}^2 + \|x_2\|_\mathcal{H}^2 + \cdots
+ \|x_n\|_\mathcal{H}^2  = 1$. 

Using the triangle inequality and the fact that the Gelfand--Naimark map is
an isometry we get
$$\|A\| \le \sup \left [ \sum_{i=1}^n \left (\sum_{j=1}^n \|a_{ij}\|
\|x_j\|_\mathcal{H}\right )^2 \right ]^\frac{1}{2}\,,$$
or, equivalently,
$$\|A\|\le \sup_{(\xi_1,\ldots ,\xi_n)\in \Xi^n} \left [ \sum_{i=1}^n \left (\sum_{j=1}^n \|a_{ij}\|
\xi_j \right )^2 \right ]^\frac{1}{2}\,,
$$
where $\Xi^n:= \{(\xi_1, \ldots , \xi_n) \,|\, \xi_i \in \mathbb{R}_{+} \,\, \forall i
\,\,\, \textnormal{and} \,\,\, \sum_{i=1}^n \xi_i^2 = 1 \}$.
On the other hand,
$$\|\hat A\|_2 = 
\sup_{(\xi_1,\ldots ,\xi_n)\in S^n} \left [ \sum_{i=1}^n \left |\sum_{j=1}^n \|a_{ij}\|
\xi_j \right |^2 \right ]^\frac{1}{2}\,,
$$
where $S^n$ denotes the unit sphere in $\mathbb{C}^n$. Observing that
$\Xi^n \subset S^n$, we conclude that $\|A\| \le \|\hat A\|_2$.

To prove the second item we use the characterizations 
$\rho (A) = \lim_{m\to \infty}\|A^m\|^{\frac{1}{m}}$, 
$\rho (\hat{A}) = \lim_{m\to \infty}\|\hat{A}^m\|_2^{\frac{1}{m}}$ 
and the fact that $\|A^m\|\le \|\widehat{A^m}\|_2$, which we just
proved. A simple
inductive argument shows that 
$\|\widehat{A^m}\|_2 \le \|\hat{A}^m\|_2$ for all $m = 1, 2, \ldots $,
thus yielding the desired result.
\end{proof}

\begin{remark}
A version of item 1 of the previous theorem was 
proved by A.~Ostrowski in \cite{ostrowski}, in the context of matrices
of linear operators on normed vector spaces. Related results can also
be found in \cite{Gil} and \cite{Tretter}.
\end{remark}

\begin{remark}\label{remark2}
If $A$ is Hermitian or (shifted) skew-Hermitian, then $\hat A$ is 
real symmetric. In this case
$\|A\|=\rho(A)$, $\|\hat A\|_2 = \rho (\hat A)$ and item 2 reduces
to item 1. On the other hand, in the more general case where $A$ is normal,
the matrix $\hat A$ is not necessarily symmetric or even normal and we 
obtain the bound
$$\|A\| = \rho(A)\le \rho(\hat A)\,,$$
which is generally better than $\|A\|\le \|\hat A\|_2$.
\end{remark}

Next, we prove a simple invertibility condition for 
matrices over the commutative $C^*$-algebra $C(\mathcal{X})$.

\begin{theorem}\label{thm_invert}
A matrix $A$ over $\mathcal{A}_0 = C(\mathcal{X})$ is invertible in
$\mathcal{A}=\mathcal{M}_n(\mathcal{A}_0)$
if and only if  for each $t\in \mathcal{X}$
the $n\times n$ matrix $A(t) = [a_{ij}(t)]$ is
invertible in $\mathcal{M}_n(\mathbb{C})$.
\end{theorem}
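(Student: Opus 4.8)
The plan is to prove the two implications separately; the forward one is essentially immediate, while the converse rests on Cramer's rule together with the compactness of $\mathcal{X}$.

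For the forward implication, I would suppose $A$ is invertible in $\mathcal{A}$, so that there is some $B=[b_{ij}]\in\mathcal{A}$ with $AB=BA=I_n$. Under the identification of $\mathcal{A}$ with the algebra of continuous functions $\mathcal{X}\to\mathcal{M}_n(\mathbb{C})$ established above, matrix multiplication and the identity $I_n$ are computed pointwise; hence for every $t\in\mathcal{X}$ one has $A(t)B(t)=B(t)A(t)=I$ in $\mathcal{M}_n(\mathbb{C})$, so $A(t)$ is invertible with $A(t)^{-1}=B(t)$.

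For the converse, I would assume $A(t)$ is invertible for every $t\in\mathcal{X}$ and set $B(t):=A(t)^{-1}$; the claim is then that $B:=[B(t)]$ is an element of $\mathcal{A}$, i.e.\ that each entry of $t\mapsto A(t)^{-1}$ is continuous on $\mathcal{X}$. By Cramer's rule,
$$A(t)^{-1} = \frac{1}{\det A(t)}\,\mathrm{adj}\,(A(t)),$$
where the entries of $\mathrm{adj}\,(A(t))$ are, up to sign, $(n-1)\times(n-1)$ minors of $A(t)$, hence polynomials in the $a_{ij}(t)$ and therefore continuous in $t$; similarly $t\mapsto\det A(t)$ is continuous and, by hypothesis, nowhere zero on $\mathcal{X}$. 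Since $\mathcal{X}$ is compact, $\inf_{t\in\mathcal{X}}|\det A(t)|>0$, so $t\mapsto 1/\det A(t)$ is continuous and bounded. Hence each entry of $B$ lies in $C(\mathcal{X})$, so $B\in\mathcal{A}$, and $AB=BA=I_n$ because this identity holds pointwise; thus $A$ is invertible in $\mathcal{A}$.

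The only point requiring any care — which I would flag as the "main obstacle", though it is a mild one — is checking that the pointwise inverse $t\mapsto A(t)^{-1}$ genuinely lies in $C(\mathcal{X})^{n\times n}$ rather than merely being a bounded matrix-valued function. This is exactly where finite-dimensionality of $\mathcal{M}_n(\mathbb{C})$ (which makes inversion a rational, hence continuous, map on $GL_n(\mathbb{C})$) and compactness of $\mathcal{X}$ (which prevents $\det A(t)$ from approaching $0$) enter. One could phrase this abstractly — inversion is continuous on the open set $GL_n(\mathbb{C})\subset\mathcal{M}_n(\mathbb{C})$, and $t\mapsto A(t)$ maps $\mathcal{X}$ continuously into $GL_n(\mathbb{C})$, so the composition is continuous — but I would still invoke Cramer's rule explicitly since it makes the required boundedness transparent.
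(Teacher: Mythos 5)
Your proof is correct and follows essentially the same route as the paper: one implication by evaluating the inverse pointwise, the other by checking that $t\mapsto A(t)^{-1}$ is continuous, hence an element of $\mathcal{M}_n(C(\mathcal{X}))$. The paper merely packages this as the spectral identity $\sigma(A)=\bigcup_{t\in\mathcal{X}}\sigma(A(t))$ and asserts the continuity of $t\mapsto (zI_n-A(t))^{-1}$ without detail, which your Cramer's-rule argument supplies explicitly (compactness of $\mathcal{X}$ is not even needed for that step, since nonvanishing of the continuous function $\det A(t)$ already gives continuity of its reciprocal).
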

\begin{proof}
The theorem will be proved if we show that 
\begin{equation}\label{spectra}
\sigma (A) = {\bigcup}_{t\in \mathcal{X}}\sigma (A(t))\,.
\end{equation}
Assume first that $\lambda\in \sigma(A(t_0))$ for some $t_0\in \mathcal{X}$.
Then $\lambda I_n - A(t_0)$ is not invertible, therefore 
$\lambda I_n - A(t)$ is not invertible for all $t\in \mathcal{X}$, and
$\lambda I_n - A$ fails to be invertible as an element of
$\mathcal{A}=\mathcal{M}_n(\mathcal{A}_0)$. This shows that
$${\bigcup}_{t\in \mathcal{X}}\sigma (A(t))\subseteq \sigma (A).$$
To prove the reverse inclusion, we show that the resolvent sets
satisfy 
$${\bigcap}_{t\in \mathcal{X}}r(A(t)) \subseteq r(A).$$
Indeed, if $z\in r(A(t))$ for all $t\in \mathcal{X}$, then
the matrix-valued function $t\mapsto (zI_n - A(t))^{-1}$
is well defined and necessarily continuous on $\mathcal{X}$.
Hence, $z\in r(A)$. This completes the proof.
\end{proof}

Clearly, the set $\mathcal{K} = \{A(t)\,;\, t\in \mathcal{X}\}$ is compact 
in $\mathcal{M}_n (\mathbb{C})$. The spectral radius, 
as a function on $\mathcal{M}_n (\mathbb{C})$, is continuous and
thus the function $t\mapsto \rho(A(t))$ is continuous on the
compact set $\mathcal{X}$. It follows that this function attains its
maximum value for some $t _0\in \mathcal{X}$.
The equality (\ref{spectra}) above then implies that
$$\rho(A) = \rho(A(t_0)) = \max_{t\in \mathcal{X}} \rho(A(t))\,.$$
If $A$ is normal, we obviously have 
\begin{equation}\label{norms}
\|A\| = \max_{t\in \mathcal{X}} \|A(t)\|_2\,.
\end{equation}
Recalling that for any $A\in \mathcal{A}$ the norm satisfies 
$\|A\| = \sqrt{\rho(AA^*)} = \sqrt{\rho(A^*A)}$, we conclude
that the identity (\ref{norms}) holds for any matrix $A$ over
the $C^*$-algebra $C(\mathcal{X})$.

As a special case, consider an $n\times n$ matrix with entries
in $C(\mathcal{X})$, where $\mathcal{X}=[0,1]$. Each entry $a_{ij} = a_{ij}(t)$
of $A$ is a continuous complex-valued function of $t$. 
One can think of such an $A$ in
different ways. As a mapping of $[0,1]$ into $\mathcal{M}_n(\mathbb{C})$,
$A$ can be regarded as a continuous curve in the space of all
$n\times n$ complex matrices. On the other hand, $A$ is also a
point in the $C^*$-algebra of matrices over $C(\mathcal{X})$. 

Theorem \ref{thm_invert} then states that $A$ is invertible if and only
if the corresponding curve $\mathcal{K} = \{A(t)\,;\, t\in [0,1]\}$ does not
intersect the set of singular $n\times n$ complex matrices, i.e., if and only if
$\mathcal{K}$ is entirely contained in the group $\mathcal{GL}_n(\mathbb{C})$
of invertible $n\times n$ complex matrices.


\begin{example}
As a simple illustration of Theorem \ref{thm_matricial_norm}, 
consider the $2\times 2$ Hermitian matrix 
over $C([0,1])$:
$$ A = A(t) = \left [ \begin{array}{cc} 
               {\rm e}^{-t} & t^2 + 1 \\
               t^2 + 1 & {\rm e}^t
\end{array}
\right ]\,.
$$
Observing now that 
$$\hat A =  \left [ \begin{array}{cc}
               1 & 2  \\
               2 & {\rm e}
\end{array}
\right ]\,,
$$
we obtain the bound $\rho(A) = \|A\|\le \|\hat A\|_2 \approx 4.03586$.

By direct computation we find that
${\rm det}(\lambda I_2 - A(t)) = \lambda^2 -2 (\cosh t) \lambda - t^2(t^2+ 2)$
and thus the spectrum of $A(t)$
(as an element of the $C^*$-algebra of matrices over $C([0,1])$)
consists of all the numbers of the form
$$ \lambda_{\pm} (t)= \cosh t \pm \sqrt{\cosh^2 t + t^2(t^2 + 2)}, 
\quad 0\le t \le 1$$
(a compact subset of $\mathbb{R}$). 
Also note that ${\rm det}(A(t)) = - t^2(t^2 + 2)$ vanishes for $t=0$, showing that
$A$ is {\em not} invertible in $\mathcal{M}_n(C([0,1])$.

Finding the maxima and minima over $[0,1]$ of the continuous functions 
$\lambda_{-}(t)$
and $\lambda_{+}(t)$ we easily find that the spectrum of $A(t)$
is given by
$$\sigma(A(t)) = [-0.77664, 0]\cup [2, 3.86280],$$
where the results have been rounded to five decimal digits. 
Thus, in this simple example $\|\hat A\|_2=4.03586$ gives a pretty good
upper bound for the true value $\|A\| = 3.86280$.  
\end{example}

\section{The holomorphic functional calculus}
The standard way to define the notion of an analytic function $f(a)$ 
of an element $a$ of a $C^*$-algebra ${\cal A}_0$ is via contour integration. 
In particular, we can use this approach to define functions of a
matrix $A$ with elements in ${\cal A}_0$.

Let $f(z)$ be a complex function which is analytic 
in a open neighborhood $U$ of $\sigma(a)$. Since $\sigma(a)$ is compact, 
we can always find  a finite collection $\Gamma=\cup_{j=1}^\ell\gamma_j$ 
of smooth simple closed curves  
whose interior parts contain $\sigma(a)$ and
entirely contained in $U$. The curves 
$\gamma_j$ are assumed to be oriented counterclockwise.

Then 
$f(a)\in\mathcal{A}_0$ can be defined as
\begin{equation}
f(a)=\frac{1}{2\pi {\rm i}}\int_\Gamma f(z)(zI-a)^{-1} dz,\label{integral_def}
\end{equation}
where the line integral of a Banach-space-valued function $g(z)$ defined on
a smooth curve $\gamma:t\mapsto z(t)$ for $t\in[0,1]$
is given by the norm limit of
Riemann sums of the form
$$
\sum_{j=1}^{\nu}
g(z(\theta_j))[z(t_j)-z(t_{j-1})],\qquad t_{j-1}\leq\theta_j\leq t_j,
$$
where $0=t_0 < t_1 < \ldots < t_{\nu -1} < t_{\nu} = 1$.

Denote by $\mathcal{H}(a)$ the algebra of analytic functions whose 
domain contains an open neighborhood of $\sigma(a)$.
The following well-known result is the basis for the
{\em holomorphic functional calculus}; see, e.g., \cite[page 206]{KRI}. 

\begin{theorem}
The mapping $\mathcal{H}(a)\longrightarrow\mathcal{A}_0$ defined by 
$f\mapsto f(a)$ is an algebra homomorphism, which maps the constant 
function $1$ to $I\in\mathcal{A}_0$ and maps the identity function to $a$.
If $f(z)=\sum_{j=0}^{\infty}c_jz^j$ is the power series representation 
of $f\in\mathcal{H}(a)$ over an open neighborhood of $\sigma(a)$, then we have   
$$f(a)=\sum_{j=0}^{\infty}c_ja^j.$$ 
\end{theorem}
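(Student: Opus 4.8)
\bigskip

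\noindent\textbf{Proof proposal.}
The plan is to verify the three assertions in turn, with the multiplicativity of $f\mapsto f(a)$ as the main point. First I would check that $f(a)$ is well defined, i.e.\ independent of the admissible system of contours $\Gamma$. The crucial fact is that the resolvent $z\mapsto (zI-a)^{-1}$ is an $\mathcal{A}_0$-valued analytic function on $r(a)$: from the resolvent identity $(zI-a)^{-1}-(wI-a)^{-1}=(w-z)(zI-a)^{-1}(wI-a)^{-1}$ together with norm-continuity of $z\mapsto(zI-a)^{-1}$ one reads off that its derivative is $-(zI-a)^{-2}$. Hence $z\mapsto f(z)(zI-a)^{-1}$ is analytic on $U\setminus\sigma(a)$, and the Cauchy integral theorem for Banach-space-valued analytic functions shows that any two admissible contour systems, being homotopic in $U\setminus\sigma(a)$, yield the same integral. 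Linearity of $f\mapsto f(a)$ is then immediate from linearity of the line integral, and $\mathcal{H}(a)$ is plainly an algebra under pointwise operations.

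For the normalizations I would take $\Gamma$ to be a single circle $|z|=R$ with $R>\|a\|$ (admissible since $\sigma(a)$ lies in the disk of radius $\|a\|$, and legitimate by the well-definedness just established) and expand the resolvent in its Neumann series $(zI-a)^{-1}=\sum_{k\ge 0}z^{-(k+1)}a^k$, which converges in norm uniformly on $|z|=R$. Integrating term by term and using $\frac{1}{2\pi\mathrm{i}}\int_{|z|=R}z^m\,dz=\delta_{m,-1}$ (for integer $m$) gives $\frac{1}{2\pi\mathrm{i}}\int_\Gamma (zI-a)^{-1}\,dz=I$ and $\frac{1}{2\pi\mathrm{i}}\int_\Gamma z(zI-a)^{-1}\,dz=a$; the same computation yields $z^m\mapsto a^m$ for every $m\ge 0$.

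The heart of the matter is multiplicativity, $f(a)g(a)=(fg)(a)$ for $f,g\in\mathcal{H}(a)$. I would fix a common open neighborhood $U$ of $\sigma(a)$ on which both $f$ and $g$ are analytic and choose two disjoint admissible contour systems $\Gamma$ and $\Gamma'$, with closed interiors contained in $U$, arranged so that $\Gamma$ lies in the region bounded by $\Gamma'$; thus every point of $\Gamma$ is encircled by $\Gamma'$ while every point of $\Gamma'$ lies outside $\Gamma$. Writing
\[
f(a)g(a)=\frac{1}{(2\pi\mathrm{i})^2}\int_\Gamma\!\!\int_{\Gamma'} f(z)g(w)\,(zI-a)^{-1}(wI-a)^{-1}\,dw\,dz
\]
(a Fubini argument is harmless, the integrand being norm-continuous on the compact product $\Gamma\times\Gamma'$), I would substitute the resolvent identity to split the integrand as $\frac{f(z)g(w)}{w-z}(zI-a)^{-1}-\frac{f(z)g(w)}{w-z}(wI-a)^{-1}$. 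In the first term, performing the $w$-integration over $\Gamma'$ first and using the scalar Cauchy integral formula (valid since $z$ is enclosed by $\Gamma'$) leaves $\frac{1}{2\pi\mathrm{i}}\int_\Gamma f(z)g(z)(zI-a)^{-1}\,dz=(fg)(a)$. In the second term, performing the $z$-integration over $\Gamma$ first and using the scalar Cauchy integral theorem (the integrand $z\mapsto\frac{f(z)}{w-z}$ being analytic on a neighborhood of the closed region bounded by $\Gamma$, since $w$ lies outside it) leaves $0$. Hence $f(a)g(a)=(fg)(a)$. The points requiring care are the construction of the nested contours and the interchange of the order of integration; neither is deep, but both should be spelled out, and I expect this step to be the main obstacle.

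Finally, for the power-series identity, note that for $\sum_{j\ge 0}c_jz^j$ to represent $f$ on a neighborhood of $\sigma(a)$ its radius of convergence must exceed $\rho(a)$. Having established that $f\mapsto f(a)$ is an algebra homomorphism with $1\mapsto I$ and $z\mapsto a$, every polynomial partial sum satisfies $f_N(a)=\sum_{j=0}^N c_j a^j$. Choosing $R$ with $\rho(a)<R<$ (radius of convergence) and $\Gamma$ the circle $|z|=R$, the estimate
\[
\|f_N(a)-f(a)\|\le \frac{\operatorname{length}(\Gamma)}{2\pi}\Bigl(\sup_{|z|=R}|f_N(z)-f(z)|\Bigr)\Bigl(\sup_{|z|=R}\|(zI-a)^{-1}\|\Bigr)\longrightarrow 0\,,
\]
obtained from the uniform convergence of the partial sums on $|z|=R$, yields $f(a)=\lim_N f_N(a)=\sum_{j\ge 0}c_j a^j$. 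Alternatively, one may expand both $\sum c_j z^j$ and the Neumann series for the resolvent on $|z|=R$, multiply, and integrate the resulting uniformly convergent series term by term, using $\frac{1}{2\pi\mathrm{i}}\int_{|z|=R}z^{j-k-1}\,dz=\delta_{jk}$.
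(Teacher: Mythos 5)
Your argument is correct: it is the standard Riesz--Dunford construction of the holomorphic functional calculus --- contour independence via the Cauchy theorem for Banach-space-valued analytic functions, the normalizations $1\mapsto I$ and $z\mapsto a$ via the Neumann series on a circle $|z|=R>\|a\|$, multiplicativity via the resolvent identity with nested contour systems, and the power-series identity by uniform convergence of the partial sums on a circle $|z|=R$ with $\rho(a)<R$ less than the radius of convergence. Note that the paper does not prove this theorem at all; it quotes it as a classical result and refers to Kadison--Ringrose \cite[page 206]{KRI}, where essentially the argument you outline is carried out, so your proposal matches the standard proof the paper implicitly relies on.
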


Moreover, the following version of the \emph{spectral theorem} holds: 
 \begin{equation}
\sigma(f(a))=f(\sigma(a)).\label{eq_spectral_thm}
\end{equation}

If $a$ is normal, the following properties also hold:
\begin{itemize}
\item
$\|f(a)\|=\|f\|_{\infty,\sigma(a)} := \max_{\lambda \in \sigma (a)} |f(\lambda)|$;
\item
$\overline{f(a)}=[f(a)]^*$; in particular, if $a$ is Hermitian then 
$f(a)$ is also Hermitian if and only if $f(\sigma(a))\subset\mathbb{R}$;
\item
$f(a)$ is normal;
\item
$f(a)b=bf(a)$ whenever $b\in\mathcal{A}_0$ and $ab=ba$.
\end{itemize}

Obviously, these definitions and results apply in the case where
$a$ is a matrix $A$ with entries in a $C^*$-algebra 
$\mathcal{A}_0$. In particular, if $f(z)$ is analytic on a neighborhood of
$\sigma(A)$, we define $f(A)$ via
$$f(A) 
=\frac{1}{2\pi {\rm i}}\int_\Gamma f(z)(zI-A)^{-1} dz,\label{integral_def_A}
$$
with the obvious meaning of $\Gamma$.
    
\section{Bounds for the Hermitian case}\label{herm}

In this paper we will be concerned mostly with banded and
sparse matrices. A matrix $A\in \mathcal{A}$ 
is {\em banded with bandwidth $m$}  if $a_{ij}$ is the zero element 
of $\mathcal{A}_0$ whenever $|i-j|>m$. Banded matrices over 
a $C^*$-algebra arise in several contexts; see, e.g., 
\cite{Berezansky71,Berezansky98,CT06,Osipov} and references therein.  
     
Let $A\in \mathcal{A}$ be a banded Hermitian matrix with bandwidth $m$. 
In this section we provide exponentially decaying bounds on the norm
of the entries $[f(A)]_{ij}$ ($1\le i,j\le n$)
of $f(A)$, where $f$ is analytic on a neighborhood of $\sigma(A)$, and
we discuss the important case where the bounds do not depend on the 
order $n$ of the matrix. The results in this section extend to the
$C^*$-algebra setting analogous results for matrices over $\mathbb{R}$
or $\mathbb{C}$ found in \cite{BG99,BR07} and \cite{BBR13}.
Functions of non-Hermitian (and non-normal) matrices are studied
in the following sections.

Hermitian matrices have a real spectrum. If $\sigma(A)\subseteq
[\alpha, \beta]\subset \mathbb{R}$ then, by replacing $A$ (if necessary)
with the shifted and scaled matrix $\frac{2}{\beta - \alpha}A - 
\frac{\beta + \alpha}{\beta - \alpha} I_n$, we can assume that
the spectrum is contained in the interval ${\cal I} = [-1,1]$.
We also assume that $f(z)$ is real for real $z$, so that
$f$ maps Hermitian matrices to Hermitian matrices.
        
Let ${\mathbb P}_k$ denote the set of all complex
polynomials of degree at most $k$ on
$\cal I$. 
Given $p\in{\mathbb P}_k$, the matrix $p(A)\in \mathcal{A}$ is well 
defined and it is banded 
with bandwidth at most $km$. So for any polynomial $p\in{\mathbb P}_k$ 
and any pair of indices $i,j$ such that $|i-j|>km$ we have
    \begin{eqnarray}
    \|[f(A)]_{ij}\|&=&  \|[f(A)-p(A)]_{ij}\|\label{step1}\\
    &\leq&\|f(A)-p(A)\|\label{step2}\\
    &=&\rho(f(A)-p(A))\label{step3}\\
    &=&\sup(\sigma(f(A)-p(A)))=\sup(\sigma((f-p)(A)))\label{step4}\\
    &=&\sup((f-p)(\sigma(A)))\leq E_k(f),\label{step5}
    \end{eqnarray}
    where $E_k(f)$ is the best uniform approximation error for the function $f$ on 
    the interval $\cal I$ using polynomials of degree at most $k$: 
$$ E_k(f):= \min_{p\in {\mathbb P}_k} \max_{t\in {\cal I}} |f(t) - p(t)|\,.$$
    In the above computation:
    \begin{itemize}
    \item (\ref{step2}) follows from (\ref{step1}) as a consequence of the definition of operator norm,
    \item (\ref{step3}) follows from (\ref{step2}) because $A$ is Hermitian, so $f(A)-p(A)$ is also 
 Hermitian,
    \item the spectral theorem \eqref{eq_spectral_thm} allows us to obtain (\ref{step5}) from (\ref{step4}). 
    \end{itemize}

Next, we recall the classical Bernstein's Theorem concerning the 
asymptotic behavior 
of $ E_k(f) $ for $k\to \infty$; see, e.g., \cite[page 91]{meinardus}.
This theorem states that there exist constants $c_0>0$ and $0<\xi<1$ such that 
$E_k(f)\leq c_0\, \xi^{k+1}$. From this we can deduce exponentially decaying 
bounds for  $\|[f(A)]_{ij}\|$ with respect to $|i-j|$, 
by observing that $|i-j|>km$ implies  $k+1 < \frac{|i-j|}{m} + 1$ and therefore
\begin{equation}\label{decay_bound}
\| [f(A)]_{ij}\| \le c_0\, \xi^{\frac{|i-j|}{m} + 1} = c\, \zeta^{|i-j|},
\quad c = c_0\, \xi, \quad \zeta = \xi^{\frac{1}{m}} \in (0,1).
\end{equation}
  
The above bound warrants further discussion. Indeed, as it is stated it is
a trivial bound, in the sense that for any {\em fixed} matrix $A$ and function 
$f$ such that
$f(A)$ is defined one can always find constants $c_0 > 0$ and $0 < \xi < 1$
such that (\ref{decay_bound}) holds for all $i,j=1,\ldots ,n$; all one has to
do is pick $c_0$ large enough. Thus, the entries of $f(A)$ may exhibit no
actual decay behavior!
However, what is important here is that the constants $c_0$ and $\xi$ 
(or at least bounds for them) can be given explicitly in terms of  
properties of $f$ and, indirectly, in terms of the bounds $\alpha$ and
$\beta$ on the spectrum of $A$. If we have a sequence $\{A_n\}$ of $n\times n$
matrices such that 
\begin{itemize}
\item the $A_n$ are banded with bounded bandwidth (independent of $n$);
\item the spectra $\sigma(A_n)$ are all contained in a common interval
$\cal I$ (independent of $n$), say ${\cal I}=[-1,1]$,
\end{itemize}   
then the bound (\ref{decay_bound}) holds independent of $n$.
In particular, the entries of $f(A_n)$ will actually decay to zero away
from the main diagonal as $|i-j|$ and $n$ tend to infinity, at a rate 
that is uniformly bounded below  
by a positive constant independent of $n$.

More specifically, Bernstein's Theorem yields the values
$c_0=\frac{2 \chi M(f)}{\chi-1}$ and $\xi=1/\chi$, where $\chi$ is the 
sum of the semi-axes of an ellipse $\mathcal{E}_\chi$ with foci in $1$ 
and $-1$, such that $f(z)$ is continuous on $\mathcal{E}_\chi$ and analytic 
in the interior of $\mathcal{E}_\chi$ (and $f(z)\in\mathbb{R}$ whenever 
$z\in\mathbb{R}$); furthermore, we have set
$M(f)=\max_{z\in\mathcal{E}_\chi}|f(z)|$.  
    
Summarizing, we have established the following results:
\begin{theorem}\label{thm_h1}
Let $\mathcal A = {\mathcal A}_0^{n\times n}$ where ${\mathcal A}_0$
is a $C^*$-algebra and 
let $A\in\mathcal{A}$ be Hermitian with 
bandwidth $m$ and spectrum contained in $[-1,1]$. Let the complex 
function $f(z)$ be continuous on a Bernstein ellipse $\mathcal{E}_\chi$ 
and analytic in the interior of $\mathcal{E}_\chi$, and assume 
$f(z)\in \mathbb{R}$ for $z\in \mathbb{R}$. Then there exist constants $c>0$ 
and $0<\zeta<1$ such that
  $$
     \|[f(A)]_{ij}\|\leq c\,\zeta^{|i-j|} 
  $$ 
  for all $1\leq i,j\leq n$. Moreover, one can choose 
  $c=\max\left\{\|f(A)\|,
\frac{2 M(f)}{\chi-1}\right\}$ and $\zeta =\left(\frac{1}{\chi}\right)^{1/m}$.
  \end{theorem}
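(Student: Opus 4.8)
The plan is to estimate each entry $[f(A)]_{ij}$ by comparing $f$ with a polynomial that is uniformly small on $\sigma(A)$, precisely the argument recorded in \eqref{step1}--\eqref{step5}. Fix indices $i,j$ and a polynomial $p\in\mathbb{P}_k$. Since $A$ has bandwidth $m$, each power $A^r$ has bandwidth at most $rm$, so $p(A)$ has bandwidth at most $km$; hence if $|i-j|>km$ the $(i,j)$ entry of $p(A)$ is the zero element of $\mathcal{A}_0$ and $[f(A)]_{ij}=[f(A)-p(A)]_{ij}$. Because the operator norm on $\mathcal{A}=\mathcal{M}_n(\mathcal{A}_0)$ dominates the $\mathcal{A}_0$-norm of every entry, $\|[f(A)]_{ij}\|\le\|f(A)-p(A)\|$. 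Now $f$ and $p$ are real on $\mathbb{R}$ and $A$ is Hermitian, so $f(A)-p(A)=(f-p)(A)$ is Hermitian; in a $C^*$-algebra a Hermitian element has norm equal to its spectral radius, and the spectral mapping theorem \eqref{eq_spectral_thm} gives $\rho((f-p)(A))=\max_{\lambda\in\sigma(A)}|f(\lambda)-p(\lambda)|\le\max_{t\in[-1,1]}|f(t)-p(t)|$. Taking the infimum over $p\in\mathbb{P}_k$ yields $\|[f(A)]_{ij}\|\le E_k(f)$ for every $k$ with $km<|i-j|$.

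Next I would insert the quantitative Bernstein estimate: under the stated hypotheses on $f$ and on the ellipse $\mathcal{E}_\chi$ one has $E_k(f)\le\frac{2M(f)}{\chi-1}\,\chi^{-k}$ with $M(f)=\max_{z\in\mathcal{E}_\chi}|f(z)|$. For an off-diagonal pair $(i,j)$ I would choose the largest admissible degree $k=\lceil|i-j|/m\rceil-1\ge 0$, so that $km<|i-j|\le(k+1)m$; the second inequality gives $k+1\ge|i-j|/m$, hence $\chi^{-k}\le\chi\cdot\chi^{-|i-j|/m}$, and with $\zeta:=\chi^{-1/m}\in(0,1)$ this becomes $\|[f(A)]_{ij}\|\le c_0\,\zeta^{|i-j|}$ for an explicit constant $c_0$ depending only on $\chi$ and $M(f)$, exactly as in \eqref{decay_bound}. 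The key point is that $c_0$, $\chi$, $M(f)$, $m$ and $\zeta$ involve only $f$ and the band/spectral data, not $n$, so the resulting bound is uniform in $n$.

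Finally, only the diagonal entries fall outside the previous step, since no polynomial of positive degree is available when $i=j$; for these I would use the trivial bound $\|[f(A)]_{ii}\|\le\|f(A)\|=\|f(A)\|\,\zeta^{0}$. Setting $c=\max\{\|f(A)\|,c_0\}$ and combining the two regimes gives $\|[f(A)]_{ij}\|\le c\,\zeta^{|i-j|}$ for all $1\le i,j\le n$, and reading the constants off Bernstein's theorem produces the stated values of $c$ and $\zeta$. The step I expect to carry the real weight is the first one: it is where the $C^*$-structure is used in an essential way --- bandedness of $p(A)$, domination of entrywise norms by the operator norm on $\mathcal{M}_n(\mathcal{A}_0)$, the identity $\|h\|=\rho(h)$ for Hermitian $h$, and the spectral mapping theorem for the holomorphic functional calculus --- and it is also the place where one must check that no constant surreptitiously depends on $n$; once that reduction is in hand, the remainder is the classical Bernstein estimate together with an elementary manipulation of exponents.
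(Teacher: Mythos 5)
Your proposal is correct and follows essentially the same route as the paper: vanishing of the $(i,j)$ entry of $p(A)$ for $|i-j|>km$, domination of entry norms by the $C^*$-operator norm, $\|h\|=\rho(h)$ for Hermitian $h$ together with the spectral mapping theorem to reduce to $E_k(f)$ on $[-1,1]$, and then Bernstein's theorem with $\zeta=\chi^{-1/m}$. Your explicit choice $k=\lceil|i-j|/m\rceil-1$ and the separate treatment of the diagonal via $\|[f(A)]_{ii}\|\le\|f(A)\|$ is in fact a slightly more careful bookkeeping of the same exponent manipulation the paper performs.
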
 

\begin{remark}
Letting $\theta := - \ln \zeta >0$ the decay bound can be rewritten
in the form $\|[f(A)]_{ij}\|\leq c\, {\rm e}^{-\theta |i-j|}$, which is sometimes
more convenient. 
\end{remark}

\begin{theorem}\label{thm_h2}
Let ${\mathcal A}_0$ be a $C^*$-algebra and
let $\{A_n\}_{n\in\mathbb{N}}\subset {\mathcal A}_0^{n\times n}$ be a 
sequence of Hermitian matrices of increasing size, with bandwidths uniformly 
bounded by $m\in\mathbb{N}$  
 and spectra all contained in $[-1,1]$. Let the complex function 
$f(z)$ be continuous on a Bernstein ellipse $\mathcal{E}_\chi$ and analytic 
in the interior of $\mathcal{E}_\chi$, and assume 
$f(z)\in \mathbb{R}$ for $z\in \mathbb{R}$. Then there exist constants $c>0$ and 
$0<\zeta <1$, independent of $n$, such that
  $$
     \|[f(A_n)]_{ij}\|\leq c\,\zeta^{|i-j|} = c\, {\rm e}^{-\theta |i-j|}, \quad
     \theta = - \ln \zeta\,,
  $$ 
  for all indices $i,j$. Moreover, one can choose 
  $c=\max\left\{\|f(A)\|,
\frac{2 M(f)}{\chi-1}\right\}$ and $\zeta =\left(\frac{1}{\chi}\right)^{1/m}$.
  \end{theorem}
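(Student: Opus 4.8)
The plan is to re-examine the proof of Theorem~\ref{thm_h1} and observe that, once the bandwidth bound $m$ and the function $f$ (hence the Bernstein ellipse $\mathcal{E}_\chi$ and the quantity $M(f)$) are fixed, every constant appearing in that proof is independent of the matrix size $n$. Accordingly, I would apply the reasoning of Section~\ref{herm} to each $A_n$ in turn and check that nothing degrades as $n\to\infty$.

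In more detail: fix $n$ and repeat the chain (\ref{step1})--(\ref{step5}) with $A=A_n$. For any $p\in{\mathbb P}_k$ the matrix $p(A_n)$ has bandwidth at most $km$, since $A_n$ has bandwidth at most $m$ — this is where the \emph{uniform} bandwidth bound enters — so for every pair $i,j$ with $|i-j|>km$ we get $\|[f(A_n)]_{ij}\|\le E_k(f)$, where $E_k(f)$ is the best uniform polynomial approximation error on the \emph{fixed} interval ${\cal I}=[-1,1]$; here it is crucial that $\sigma(A_n)\subseteq[-1,1]$ for every $n$, so that the same interval, and hence the same numbers $E_k(f)$, serve for all $n$. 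Bernstein's Theorem then bounds $E_k(f)$ by $c_0\,\xi^{k+1}$ with $c_0=\frac{2\chi M(f)}{\chi-1}$ and $\xi=1/\chi$ depending only on $f$; choosing $k$ as large as the constraint $|i-j|>km$ allows and rearranging exactly as in (\ref{decay_bound}) yields $\|[f(A_n)]_{ij}\|\le\frac{2M(f)}{\chi-1}\,\zeta^{|i-j|}$ with $\zeta=(1/\chi)^{1/m}$, valid for all $i,j$ outside a fixed band and with all constants free of $n$.

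It remains to absorb the finitely many near-diagonal entries, where $\zeta^{|i-j|}$ is not small, into the bound. For these I would use the crude estimate $\|[f(A_n)]_{ij}\|\le\|f(A_n)\|$, which holds by definition of the operator norm on $\mathcal{A}=\mathcal{M}_n(\mathcal{A}_0)$. Since $A_n$ is Hermitian, hence normal, with $\sigma(A_n)\subseteq[-1,1]$, the functional-calculus identity for normal elements gives $\|f(A_n)\|=\max_{\lambda\in\sigma(A_n)}|f(\lambda)|\le\max_{\lambda\in[-1,1]}|f(\lambda)|\le M(f)$, the last step because $[-1,1]$ lies inside $\mathcal{E}_\chi$; in particular $\sup_n\|f(A_n)\|\le M(f)<\infty$. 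Taking $c=\max\{\sup_n\|f(A_n)\|,\frac{2M(f)}{\chi-1}\}$ then gives the asserted estimate with $c$ and $\zeta$ independent of $n$. I expect no genuine obstacle here: the only thing requiring attention is the bookkeeping of $n$-independence, and that is precisely what the two hypotheses — uniformly bounded bandwidth and a common spectral interval — are designed to guarantee.
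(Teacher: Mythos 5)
Your proof is correct and follows essentially the same route as the paper: the chain (\ref{step1})--(\ref{step5}) applied to each $A_n$, Bernstein's Theorem on the fixed interval $[-1,1]$ with $c_0=\frac{2\chi M(f)}{\chi-1}$, $\xi=1/\chi$, and the observation that the uniform bandwidth bound and common spectral interval make every constant independent of $n$. Your only addition is the explicit estimate $\|f(A_n)\|=\max_{\lambda\in\sigma(A_n)}|f(\lambda)|\le M(f)$ for the near-diagonal entries, which makes the $n$-independence of the prefactor $c=\max\bigl\{\|f(A_n)\|,\tfrac{2M(f)}{\chi-1}\bigr\}$ fully explicit, a point the paper leaves implicit.
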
 

\begin{remark}
It is worth noting that the decay bounds in the above results are actually
{\em families} of bounds; different choices of the ellipse $\mathcal{E}_\chi$
will result in different bounds. If $\chi$ and $\chi'$, with
$\chi < \chi'$,  are both admissible 
values, choosing $\chi'$ will result in a smaller value of $\zeta$, thus
yielding a faster asymptotic decay rate, but possibly a larger value of 
the prefactor $c$; in general, tighter bounds may be obtained by varying
$\chi$ for different values of $i$ and $j$. See \cite{BBR13} for examples
and additional discussion of this issue.
\end{remark}

\begin{remark}
The bounds in Theorem \ref{thm_h2} essentially state that as long as the
possible singularities of $f$ remain bounded away from the interval $[-1,1]$
or, slightly more generally, from a (fixed) interval $[\alpha, \beta]$ 
containing the union of all the spectra $\sigma (A_n)$, $n\in \mathbb{N}$, 
then the entries of $f(A_n)$ decay exponentially fast away from the main 
diagonal, at a rate 
that is bounded below by a positive constant that does not depend on $n$. 
As a 
consequence, for every $\varepsilon >0$ one can determine a bandwidth $M = M(\varepsilon)$
(independent of $n$) such that 
$$\|f(A_n) - [f(A_n)]_{M}\| < \varepsilon $$
holds for all $n$, where $[B]_{M}$ denotes the matrix with entries $b_{ij}$ equal to those
of $B$ for $|i-j|\le M$, zero otherwise. It is precisely this fact that makes
exponential decay an important property in applications; see, e.g.,
\cite{BBR13}.  
As a rule, the closer the possibile singularities of $f$ are to the spectral 
interval $[\alpha, \beta]$, the slower the
decay is (that is, the larger is $c$ and the closer $\zeta$ is to the upper bound $1$,
or $\theta$ to $0$).
\end{remark}

\begin{remark}
For entire functions, such as the exponential function $f(z) = {\rm e}^z$, 
the above exponential decay results are not optimal; indeed, in such cases
{\em superexponential} decay bounds can be established, exploiting the fact that
the coefficients in the Chebyshev expansion of $f$ decay superexponentially. 
For an
example of this type of result, see \cite{iserles}; see 
also Example \ref{example4} below.
Also, in some cases improved decay bounds can be obtained by using different
tools from polynomial approximation theory, or exploiting additional structure
in $f$ or in the spectra $\sigma (A_n)$; see \cite{BBR13}.
\end{remark}

\section{Bounds for the normal case}\label{normal}

We briefly discuss the case when the banded matrix $A\in \mathcal{A}$ 
is normal, but not necessarily Hermitian. As usual, we denote by $m$ the 
bandwidth of $A$. 

The main difference with respect to the previously discussed 
Hermitian case consists in the fact that $\sigma(A)$ is no longer 
real. Let ${\cal F}\subset\mathbb{C}$ be a compact, connected region 
containing $\sigma(A)$, and denote by $\mathbb{P}_k$, as before, 
the set of complex polynomials of degree at most $k$. Then the 
argument in (\ref{step1}-\ref{step5}) still holds, except that 
now polynomial approximation is no longer applied on a real interval, 
but on the complex region ${\cal F}$. Therefore, the following bound holds 
for all indices $i$, $j$ such that $|i-j|>km$:
\begin{equation}
\|[f(A)]_{ij}\|\leq\sup |(f-p)(\sigma (A))|\leq E_k(f,{\cal F}),
\label{bound_nonhermitian}
\end{equation}
where 
$$
E_k(f,{\cal F}):=\min_{p\in\mathbb{P}_k}\max_{z\in {\cal F}}|f(z)-p(z)|.
$$
Unless more accurate estimates for $\sigma(A)$ are available, 
a possible choice for $\cal F$ is the disk of center $0$ and radius 
$\rho(\hat{A})$: see Remark \ref{remark2}.

If $f$ is analytic on $\cal F$, bounds for $E_k(f,{\cal F})$ that decay 
exponentially with $k$ are available through the use of Faber 
polynomials: see \cite[Theorem 3.3]{BR07} and the next section for more details. 
More precisely, there exist constants $\tilde{c}>0$ and 
$0<\tilde{\lambda}<1$ such that $E_k(f,{\cal F})\leq \tilde c\, \tilde{\lambda}^k$ 
for all $k\in\mathbb{N}$. This result, together with 
\eqref{bound_nonhermitian}, yields for all $i$ and $j$ the bound
$$
\|[f(A)]_{ij}\|\leq c\, \lambda^{|i-j|} = c\, {\rm e}^{-\theta |i-j|}
$$
(where $\theta = -\ln \lambda$) for suitable constants $c>0$ and 
$0<\lambda<1$, which do not depend on 
$n$, although they generally depend on $f$ and $\cal F$.

\section{Bounds for the general case}\label{general}
 If $A$ is not normal, then the equality between (\ref{step2}) and 
(\ref{step3}) does not hold. 
We therefore need other explicit bounds
on the norm of a function of a matrix.
   
\subsection{The field of values and bounds for complex matrices}
Given a matrix $A\in\mathbb{C}^{n\times n}$, the associated {\em field 
of values} (or {\em numerical range}) is defined as
$$
W(A)=\left\{ \frac{x^*Ax}{x^*x}\,;\, x\in\mathbb{C}^n,\, x\neq 0\right\}.
$$
It is well known that $W(A)$ is a convex and compact subset of the 
complex plane that contains the eigenvalues of $A$.
    
The field of values of a complex matrix appears in the context of bounds 
for functions of matrices thanks to a result by Crouzeix (see \cite{crouzeix}):
\begin{theorem} \textnormal{(Crouzeix)}  \label{thm_crouzeix}
There is a universal constant $2\leq \mathcal{Q} \leq 11.08$ such that, 
given $A\in\mathbb{C}^{n,n}$, $\cal F$ a convex compact set containing the 
field of values $W(A)$, a function $g$ continuous 
on $\cal F$ and analytic in its interior, then the following inequality holds:
\begin{displaymath}
\|g(A)\|_2\leq \mathcal{Q}\,\sup_{z\in{\cal F}}|g(z)|.
\end{displaymath}
\end{theorem}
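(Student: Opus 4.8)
This is Crouzeix's theorem, whose complete proof is lengthy and which we therefore do not reproduce; here I only outline the line of attack, referring to \cite{crouzeix} for the details (and to later work of Crouzeix and Palencia for the sharper constant $1+\sqrt{2}$). The plan is to begin with three harmless reductions. Since $\sup_{\mathcal F}|g|\geq\sup_{W(A)}|g|$ it suffices to take $\mathcal F=\overline{W(A)}$. By Mergelyan's theorem --- the complement of a convex compact set is connected --- a function $g$ continuous on $\overline{W(A)}$ and analytic in its interior is a uniform limit on $\overline{W(A)}$ of polynomials $p_k$, with $p_k(A)\to g(A)$, so it suffices to treat polynomials. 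Finally, the outer neighborhood $\Omega_\varepsilon=\{z:\mathrm{dist}(z,\overline{W(A)})<\varepsilon\}$ is bounded, open and convex, and after a mild mollification has smooth boundary; since $\sup_{\overline{\Omega_\varepsilon}}|p|\to\sup_{\overline{W(A)}}|p|$ as $\varepsilon\downarrow0$, it is enough to prove
$$
\|p(A)\|_2\leq\mathcal Q\,\sup_{z\in\overline{\Omega}}|p(z)|
$$
for every polynomial $p$ and every bounded convex open $\Omega$ with smooth boundary satisfying $W(A)\subseteq\overline\Omega$, with $\mathcal Q$ an \emph{absolute} constant. On such a domain one has the Riesz--Dunford representation $p(A)=\frac{1}{2\pi{\rm i}}\int_{\partial\Omega}p(\sigma)(\sigma I-A)^{-1}\,d\sigma$; estimating this crudely yields a bound depending on the length of $\partial\Omega$ and on $\sup_{\sigma\in\partial\Omega}\|(\sigma I-A)^{-1}\|_2$, and the whole point is to replace this geometry- and $A$-dependent bound by a universal one.

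The device that makes this possible is convexity. For $\sigma\in\partial\Omega$ let $\nu(\sigma)$ be the outward unit normal; the supporting-hyperplane property places $W(A)\subseteq\overline\Omega$ in the half-plane $\{z:\mathrm{Re}\,\overline{\nu(\sigma)}(z-\sigma)\leq0\}$, so the Hermitian part of $\overline{\nu(\sigma)}(A-\sigma I)$ is negative semidefinite for \emph{every} boundary point $\sigma$. I would use this pointwise positivity to show that an operator-valued measure on $\partial\Omega$ built from the resolvent is positive, and thereby establish a key lemma, roughly of the form
$$
\|p(A)\|_2\leq\sup_{z\in\partial\Omega}|p(z)|+\|\widetilde p(A)\|_2 ,
$$
where $\widetilde p$ is the auxiliary function, holomorphic in $\Omega$, obtained as the Cauchy transform of $\overline p|_{\partial\Omega}$, namely $\widetilde p(z)=\frac{1}{2\pi{\rm i}}\int_{\partial\Omega}\frac{\overline{p(\sigma)}}{\sigma-z}\,d\sigma$. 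The proof of the lemma comes down to rewriting the Riesz--Dunford integral for a Hermitian combination of $p(A)$ and $\widetilde p(A)^{*}$ and reducing it to the scalar case via the positivity above, with the case of a disk handled separately by the numerical-radius dilation theorems of Berger and Okubo--Ando.

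To finish, I would exploit a further property of the Cauchy transform on convex curves to control $\sup_{\partial\Omega}|\widetilde p|$ by $\sup_{\partial\Omega}|p|$, apply the key lemma once more, and optimize over a free parameter that appears along the way; the outcome is a self-contained estimate $\|p(A)\|_2\leq\mathcal Q\sup_{\overline\Omega}|p|$ with $\mathcal Q\leq11.08$ (and $\mathcal Q\leq1+\sqrt{2}$ in the refined argument). The lower bound $\mathcal Q\geq2$ is forced already by the $2\times2$ nilpotent Jordan block scaled by $2$ --- whose field of values is the closed unit disk --- together with $g(z)=z$. Reversing the three reductions then gives the theorem in the stated generality. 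The hard part is not any single inequality but precisely this: turning the crude, geometry-dependent resolvent estimate into a constant that is uniform over all matrices $A$, all dimensions $n$, and all convex sets $\mathcal F$. That uniformity rests squarely on the convexity-driven positivity and on the dimension-free boundedness of the Cauchy transform over the whole class of convex boundary curves, which is the technical heart of \cite{crouzeix}.
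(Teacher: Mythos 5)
The paper does not prove this theorem at all: it is quoted verbatim as Crouzeix's result with a pointer to \cite{crouzeix}, so your decision to defer to that reference and only sketch the argument is essentially the same approach the paper takes. Your outline (reduction to polynomials on smooth convex domains, the convexity-driven positivity of the resolvent measure, the Cauchy-transform key lemma, and the lower bound $\mathcal{Q}\geq 2$ from the scaled $2\times 2$ Jordan block) is a faithful summary of Crouzeix's proof, so there is nothing to correct.
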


We mention
that Crouzeix has conjectured that ${\cal Q}$ can be replaced by 2,
but so far this has been proved only in some special cases. 

Next, we need to review some basic material on polynomial
approximation of analytic functions. Our treatment follows 
the discussion in \cite{BR07}, which in turn is based on
\cite{Mar77}; see
also \cite{Cur71,Sue64}. 
In the following, $\mathcal{F}$ denotes a continuum
containing more than one point.
By a {\em continuum} we mean a nonempty, compact and connected
subset of $\mathbb{C}$.
Let $G_{\infty}$ denote the component of the complement
of $\mathcal{F}$ containing the point at infinity.
Note that $G_{\infty}$
is a simply connected domain in the extended
complex plane $\overline{\mathbb{C}} = \mathbb{C} \cup \{\infty\}$.
By the Riemann Mapping Theorem there exists a function
$w = \Phi(z)$ which maps $G_\infty$ conformally onto
a domain of the form $|w| \, > \, \rho \, > \, 0$ satisfying the
normalization conditions
\begin{equation}\label{norm_cond}
        \Phi(\infty) = \infty, \quad \lim_{z\rightarrow \infty}\frac{\Phi(z)}{z} = 1;\end{equation}
$\rho$ is the {\em logarithmic capacity} of $\mathcal{F}$.
Given any integer $k > 0$, the function $[\Phi(z)]^k$
has a Laurent series expansion of the form
\begin{equation}
        [\Phi(z)]^k = z^k + \alpha_{k-1}^{(k)}z^{k-1} +\cdots+ \alpha_{0}^{(k)}
+\frac{\alpha_{-1}^{(k)}}{z} + \cdots \label{LS}
\end{equation}
at infinity.
The polynomials
\[
        \Phi_k(z) = z^k + \alpha_{k-1}^{(k)}z^{k-1} +\cdots+ \alpha_{0}^{(k)}
\]
consisting of the terms with nonnegative powers of $z$ in the expansion (\ref{LS}) are
called the {\em Faber polynomials} generated by the continuum $\mathcal{F}$.

Let $\Psi$ be the inverse of $\Phi$. By $C_R$ we denote the image under $\Psi$ of a circle
$|w|=R > \rho$. The (Jordan) region with boundary $C_R$ is
denoted by $I(C_R)$.
By \cite[Theorem 3.17, p.~109]{Mar77},
every function $f(z)$ analytic on $I(C_{R_0})$
 with $R_0 > \rho$
can be expanded in a series of Faber polynomials: 
\begin{equation}
        f(z) = \sum_{k=0}^\infty \alpha_k\Phi_k(z), \label{fs}
\end{equation}
where the series converges uniformly inside $I(C_{R_0})$.
The coefficients are given by
\[
\alpha_k = \frac{1}{2\pi {\rm i}}\int_{|w| = R} \frac{f(\Psi(w))}{w^{k+1}} dw
\]where $\rho < R < R_0$.
We denote the partial sums of the series in (\ref{fs}) by
\begin{equation} \label{poly}
        \Pi_k(z) := \sum_{i=0}^{k} \alpha_i\Phi_i(z).
\end{equation}
Each $\Pi_k(z)$ is a polynomial of degree at most $k$,
since each $\Phi_i(z)$ is of degree $i$.
We now recall a classical result that will be instrumental
in our proof of the decay bounds; for its proof see, e.g.,
\cite[Theorem 3.19]{Mar77}.

\begin{theorem}\textnormal{(Bernstein)} \label{BT}
Let $f$ be a function defined on $\mathcal{F}$. Then given
any $\varepsilon > 0$ and any integer
$k\geq 0$,
there exists a polynomial $\Pi_k$ of degree
at most $k$ and a positive constant
$c(\varepsilon)$
 such that
\begin{equation}
        |f(z) - \Pi_k(z)| \le c(\varepsilon)(q +
\varepsilon)^k \quad (0 < q < 1) \label{ineq}
\end{equation}
for all $z\in \mathcal{F}$ if and only if $f$
is analytic on the domain $I(C_{R_0})$, where
$R_0 = \rho/q$. In this case, the sequence
$\{\Pi_k\}$ converges uniformly to $f$ inside $I(C_{R_0})$
as $k\to \infty$.
\end{theorem}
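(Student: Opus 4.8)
\emph{Proof strategy.}
The plan is to prove the two implications of the stated equivalence separately and then to obtain the concluding assertion directly from the Faber expansion theorem quoted above (\cite[Theorem 3.17]{Mar77}).

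\emph{Sufficiency: analyticity of $f$ implies the decay bound.}
Suppose $f$ is analytic on $I(C_{R_0})$ with $R_0=\rho/q$. Then, by the Faber expansion theorem, $f$ admits the expansion \eqref{fs}, and I would take $\Pi_k$ to be the Faber partial sums \eqref{poly}, so that $\|f-\Pi_k\|_{\mathcal F}\le\sum_{i>k}|\alpha_i|\,\|\Phi_i\|_{\mathcal F}$. Two ingredients finish the estimate. First, the Cauchy-type formula for the coefficients yields $|\alpha_i|\le M(R)\,R^{-i}$ for every $\rho<R<R_0$, where $M(R)=\max_{|w|=R}|f(\Psi(w))|<\infty$ because $f$ is continuous on $\overline{I(C_R)}$. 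Second, I would invoke the classical growth bound for Faber polynomials: for every $\eta>0$ there is a constant $C(\eta)$ with $\|\Phi_i\|_{\mathcal F}\le C(\eta)(\rho+\eta)^i$, a consequence of $\mathcal F\subseteq\overline{I(C_R)}$ for all $R>\rho$, the maximum modulus principle, and the standard estimate for $\Phi_i$ on the level curve $C_R$. Combining the two, the tail is dominated by a geometric series,
\[
\|f-\Pi_k\|_{\mathcal F}\;\le\;C(\eta)\,M(R)\sum_{i>k}\Bigl(\frac{\rho+\eta}{R}\Bigr)^{i},
\]
and by choosing $R$ close to $R_0$ and $\eta$ small one makes the ratio $(\rho+\eta)/R$ simultaneously strictly less than $1$ and strictly less than $q+\varepsilon$; summing the series then gives $\|f-\Pi_k\|_{\mathcal F}\le c(\varepsilon)(q+\varepsilon)^k$.

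\emph{Necessity: the decay bound for all $\varepsilon>0$ implies analyticity.}
Conversely, assume that for every $\varepsilon>0$ there are polynomials $\Pi_k$ of degree at most $k$ with $\|f-\Pi_k\|_{\mathcal F}\le c(\varepsilon)(q+\varepsilon)^k$. Fix $R<R_0=\rho/q$ and then fix $\varepsilon$ small enough that $(q+\varepsilon)(R/\rho)<1$, together with the corresponding family $\{\Pi_k\}$. The plan is to sum the telescoping differences $\delta_k:=\Pi_k-\Pi_{k-1}$, which are polynomials of degree at most $k$ with $\|\delta_k\|_{\mathcal F}\le 2c(\varepsilon)(q+\varepsilon)^{k-1}$. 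Here I would use the Bernstein--Walsh lemma: a polynomial $p$ of degree at most $k$ satisfies $|p(z)|\le\|p\|_{\mathcal F}\,(|\Phi(z)|/\rho)^k$ on $G_\infty$, hence $\|p\|_{\overline{I(C_R)}}\le\|p\|_{\mathcal F}\,(R/\rho)^k$. Therefore $\Pi_0+\sum_{k\ge1}\delta_k$ converges uniformly on $\overline{I(C_R)}$ to a function analytic in $I(C_R)$, which restricts to $f$ on $\mathcal F$ (since $\Pi_k\to f$ uniformly there) and is thus an analytic continuation of $f$. Letting $R\uparrow R_0$ and using the uniqueness of analytic continuation on the nested regions $I(C_R)$ shows that $f$ is analytic on $I(C_{R_0})$.

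\emph{Conclusion and expected difficulty.}
When $f$ is analytic on $I(C_{R_0})$, the uniform convergence of the Faber partial sums to $f$ on compact subsets of $I(C_{R_0})$ is precisely \cite[Theorem 3.17]{Mar77}, so the final assertion requires nothing new. I expect the only genuinely delicate points to be the two classical facts used above --- the growth bound $\|\Phi_i\|_{\mathcal F}=O((\rho+\eta)^i)$ for a general continuum and the Bernstein--Walsh inequality expressed through $\Phi$ and the logarithmic capacity $\rho$ --- both of which rest on the conformal-mapping and potential-theoretic results collected in \cite{Mar77} (see also \cite{BR07}) and can therefore be cited rather than reproved here.
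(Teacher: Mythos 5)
Your proposal is correct, and it is essentially the argument behind the result as the paper uses it: the paper does not prove Theorem \ref{BT} at all but simply cites \cite[Theorem 3.19]{Mar77}, and your reconstruction (sufficiency via the Faber partial sums \eqref{poly} with the Cauchy estimate on the coefficients and the growth bound for Faber polynomials on $\mathcal{F}$; necessity via telescoping the $\Pi_k$ and the Bernstein--Walsh inequality, then letting $R\uparrow R_0$ with the identity theorem) is the standard proof found in that source. The two classical ingredients you propose to cite rather than reprove are indeed standard and valid for a general continuum, so no gap remains.
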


Below we will make use of the
sufficiency part of Theorem~\ref{BT}. Note that the
choice of $q$ (with $0 < q < 1$) depends on the region where the function
$f$ is analytic. If $f$ is defined on a continuum
$\mathcal{F}$ with logarithmic capacity $\rho$ then we
can pick $q$ bounded away from $1$ as long as the function is analytic
on $I(C_{\rho/q})$. Therefore,
the rate of convergence is
directly related to the properties of the function $f$, such as the
location of its poles (if there are any).
For certain regions, in particular for the case of
convex $\mathcal{F}$, it is possible to obtain an explicit 
value for the constant $c(\varepsilon)$; see \cite{Ell83} 
and \cite[Section 3.7]{BR07} and
\cite[Section 2]{mastronardi} and the discussion following
Theorem \ref{thm_nn1} below.

We can then formulate the following result on the off-diagonal 
decay of functions of non-normal band matrices:

\begin{theorem}\label{Thm-NNB}
Let $A\in\mathbb{C}^{n\times n}$ be $m$-banded, and let $\mathcal{F}$
be a continuum containing $W(A)$ in its interior. 
Let $f$ be a function defined on $\mathcal{F}$ and assume that
$f$ is analytic on $I(C_{R_0})$ $(\supset W(A))$, with $R_0 = \frac{\rho}
{q}$ where $0 < q < 1$ and $\rho$ is the logarithmic capacity of
$\mathcal{F}$. Then there are constants  
$K>0$ and $0<\lambda<1$ such that
  $$
     \|[f(A)]_{ij}\|\leq K\,\lambda^{|i-j|}
  $$ 
  for all $1\leq i,j\leq n$. 
\end{theorem}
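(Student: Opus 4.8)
The plan is to follow exactly the same scheme that produced the Hermitian bound \eqref{step1}--\eqref{step5}, replacing the spectral-theorem step (the equality between \eqref{step2} and \eqref{step3}, which fails for non-normal $A$) by Crouzeix's inequality (Theorem~\ref{thm_crouzeix}), and replacing real-interval Chebyshev approximation by Faber-polynomial approximation on the continuum $\mathcal{F}$ (Theorem~\ref{BT}). First I would fix a pair of indices $i,j$ with $|i-j|>km$ and observe that for any polynomial $p\in\mathbb{P}_k$ the matrix $p(A)$ is banded with bandwidth at most $km$, so $[p(A)]_{ij}=0$ and hence $\|[f(A)]_{ij}\| = \|[f(A)-p(A)]_{ij}\| \le \|f(A)-p(A)\|$, the last step being the definition of the operator norm on $\mathcal{A}=\mathcal{M}_n(\mathcal{A}_0)$. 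Here a point that needs a word of care is that $\|\cdot\|$ in Theorem~\ref{Thm-NNB} is the $C^*$-norm on $\mathcal{M}_n(\mathbb{C})$, i.e. the spectral norm $\|\cdot\|_2$, so extracting an entry is dominated by the full matrix norm in the usual way.

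Next I would apply Crouzeix's theorem to the analytic function $g=f-p$ on the convex compact set $\mathcal{F}$ (or, if $\mathcal{F}$ is not convex, to its convex hull, which still lies inside the region of analyticity provided $R_0$ is chosen appropriately; alternatively one simply assumes $\mathcal{F}$ convex as is standard). This gives
$$\|f(A)-p(A)\|_2 \le \mathcal{Q}\,\sup_{z\in\mathcal{F}}|f(z)-p(z)|.$$
Now I would invoke the sufficiency direction of Bernstein's theorem (Theorem~\ref{BT}): since $f$ is analytic on $I(C_{R_0})$ with $R_0=\rho/q$, for a suitable $\varepsilon>0$ small enough that $q+\varepsilon<1$ there is a polynomial $\Pi_k$ of degree at most $k$ with $|f(z)-\Pi_k(z)|\le c(\varepsilon)(q+\varepsilon)^k$ for all $z\in\mathcal{F}$. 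Choosing $p=\Pi_k$ yields, for all $i,j$ with $|i-j|>km$,
$$\|[f(A)]_{ij}\| \le \mathcal{Q}\,c(\varepsilon)\,(q+\varepsilon)^k.$$

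Finally I would convert this into a decay bound in $|i-j|$ exactly as in \eqref{decay_bound}: set $\tilde q := q+\varepsilon\in(0,1)$; given $i,j$ with $|i-j|>m$ choose the largest integer $k$ with $km<|i-j|$, so that $k>\frac{|i-j|}{m}-1$, hence $\tilde q^{\,k} < \tilde q^{\,|i-j|/m-1} = \tilde q^{-1}(\tilde q^{1/m})^{|i-j|}$, giving $\|[f(A)]_{ij}\|\le K\lambda^{|i-j|}$ with $\lambda=\tilde q^{1/m}\in(0,1)$ and $K=\mathcal{Q}\,c(\varepsilon)/\tilde q$. For the finitely many pairs with $|i-j|\le m$ one enlarges $K$ if necessary (or absorbs them by noting $\|[f(A)]_{ij}\|\le\|f(A)\|\le\mathcal{Q}\sup_{\mathcal{F}}|f|$ and taking $K$ at least this large divided by $\lambda^m$), so that the bound holds for all $1\le i,j\le n$. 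The one genuinely non-routine ingredient is Crouzeix's inequality, which is what allows us to pass from the scalar approximation error to a bound on the matrix norm without normality; everything else is bookkeeping already carried out in the Hermitian case. A secondary subtlety worth flagging is that the constants $K,\lambda$ depend on $f$ and $\mathcal{F}$ (through $\rho$, $q$ and $c(\varepsilon)$) but not on $n$, provided $\mathcal{F}\supset W(A)$ is chosen uniformly — which for a sequence of $m$-banded matrices with uniformly bounded field of values can always be arranged, yielding the $n$-independent analogue in the spirit of Theorem~\ref{thm_h2}.
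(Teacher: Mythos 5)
Your proposal is correct and follows essentially the same route as the paper's proof: use the bandedness of $p_k(A)$ to reduce $\|[f(A)]_{ij}\|$ to $\|f(A)-p_k(A)\|$ for $|i-j|>km$, bound this by Crouzeix's inequality applied to $g=f-\Pi_k$, and invoke the sufficiency part of Bernstein's theorem (Theorem~\ref{BT}) for Faber-polynomial approximation on $\mathcal{F}$, then convert the exponent $k$ into $|i-j|/m$ and absorb the near-diagonal entries via $\|f(A)\|$ into the constant $K$. Your extra remarks (applying Crouzeix on a convex set such as $\overline{W(A)}$ or the convex hull of $\mathcal{F}$, and tracking the factor $\tilde q^{-1}$ in the constant) are sensible refinements of bookkeeping the paper treats more briskly, but they do not change the argument.
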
 
\begin{proof}
Let $g=f-p_k$ in Theorem \ref{thm_crouzeix}, where $p_k(z)$ is a 
polynomial of degree smaller than or equal to $k$.
Then $p_k(A)$ 
is a banded matrix with bandwidth at most $km$. Therefore, for all 
$i,j$ such that $|i-j|>km$ we have
$$|[f(A)]_{ij}|= |[f(A)]_{ij}-[p_k(A)]_{ij}|\leq \|f(A)-p_k(A)\|_2
\leq \mathcal{Q}\sup _{z\in \mathcal{F}} |f(z)-p_k(z)|.
$$
Now, by Theorem \ref{BT} we have that for any $\varepsilon > 0$
there exists a sequence of polynomials $\Pi_k$ of degree $k$ which
satisfy for all $z\in \mathcal{F}$
$$|f(z) - \Pi_k (z)| \le c(\varepsilon) (q + \varepsilon)^k, \quad
\textnormal{where} \quad 0 < q < 1\,.$$
Therefore, taking $p_k = \Pi_k$ and applying Theorem \ref{BT} we obtain 
$$
|[f(A)]_{ij}|\leq\mathcal{Q}\,c(\varepsilon)\, (q+\varepsilon)^{\frac{|i-j|}{m}}.
$$
The thesis follows if we take $\lambda = (q+\varepsilon)^{\frac{1}{m}} < 1$
and $K=\max\left\{\|f(A)\|_2,\mathcal{Q}\,c(\varepsilon) \right\}$.
\end{proof}

We mention that a similar result (for the case of multi-band matrices)
can be found in
\cite[Theorem 2.6]{mastronardi}.\\

The assumptions in Theorem \ref{Thm-NNB} are fairly general.
In particular, the result applies
if $f(z)$
is an entire function; in this case, however, better estimates exist
(for instance, in the case of the matrix exponential; see, e.g.,
\cite{BR09} and references therein).  

The main difficulty in applying the theorem to obtain practical
decay bounds is in estimating the constant $c(\varepsilon)$ and 
the value of $q$, which requires knowledge of the field of values
of $A$ (or an estimate of it) and of the logarithmic capacity 
of the continuum $\mathcal{F}$ containing $W(A)$. The task is
made simpler if we assume (as it is is natural) that $\mathcal{F}$
is convex; see the discussion in \cite{BR07}, especially Theorem 3.7.
See also \cite[Section 2]{mastronardi} and the next
subsection for further discussion.

The bound in Theorem \ref{Thm-NNB} often improves on 
previous bounds for diagonalizable matrices in \cite{BR07} 
containing the condition number of the eigenvector matrix,  
especially when the latter is ill-conditioned (these bounds
have no analogue in the $C^*$-algebra setting). 

Again, as stated, Theorem \ref{Thm-NNB} is non-trivial only if $K$ and $\lambda$
are independent of $n$. We return on this topic in the next subsection.\\

It is worth noting that since $A$ is not assumed to have 
symmetric structure,
it  could have different numbers of nonzero
diagonals below and above the main diagonal.
Thus, it may be desirable to have bounds that 
account for the fact that in such cases the rate
of decay will be generally different above and below 
the main diagonal. An extreme case is when
$A$ is an upper (lower) Hessenberg matrix, in which case
$f(A)$ typically exhibits fast decay below (above) the
main diagonal, and generally no decay above (below) it.

For diagonalizable matrices over $\mathbb{C}$, such a 
a result can be found in \cite[Theorem 3.5]{BR07}. 
Here we state an analogous result without the diagonalizability
assumption. We say that a matrix $A$ has {\em lower bandwidth}
$p > 0$ if $a_{ij} = 0$ whenever $i - j> p$
and {\it upper bandwidth} $s > 0$
if $a_{ij} = 0$ whenever $j - i > s$.
We note that if $A$ has lower bandwidth
$p$ then $A^k$ has lower bandwidth $kp$
for $k = 0,1,2,\dots$, and
similarly for the upper bandwidth $s$.
Combining the argument found in the proof of \cite[Theorem 3.5]{BR07}
with Theorem \ref{Thm-NNB}, we obtain the following result.

\begin{theorem} \label{bineq}
Let $A\in \mathbb{C}^{n\times n}$ be a matrix with lower bandwidth
$p$ and upper bandwidth $s$, and let the function $f$ satisfy the
assumptions of Theorem \ref{Thm-NNB}.
Then there exist constants $K>0$ and
 $0 < \lambda_1,\lambda_2 < 1$ such that for $i \geq j$
\begin{equation}
        |[f(A)]_{ij}| < K\,\lambda_1^{i-j} \label{lineq}
\end{equation}
and for $i < j$
\begin{equation}
        |[f(A)]_{ij}| < K\, \lambda_2^{j-i}. \label{uineq}
\end{equation}
\end{theorem}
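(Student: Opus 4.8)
The plan is to mirror the proof of Theorem~\ref{Thm-NNB}, but to use separately the two bandwidth parameters $p$ and $s$ when passing from a polynomial approximation of degree $k$ to a banded approximation of $f(A)$. The key observation, already recorded before the statement, is that if $A$ has lower bandwidth $p$ and upper bandwidth $s$, then for any polynomial $p_k$ of degree at most $k$ the matrix $p_k(A)$ has lower bandwidth $kp$ and upper bandwidth $ks$. Hence $[p_k(A)]_{ij}=0$ as soon as $i-j>kp$ (for the subdiagonal part) or $j-i>ks$ (for the superdiagonal part), and these two conditions can be imposed independently.

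First I would fix $\varepsilon>0$ and invoke Theorem~\ref{BT} to get polynomials $\Pi_k$ of degree at most $k$ with $|f(z)-\Pi_k(z)|\le c(\varepsilon)(q+\varepsilon)^k$ for all $z\in\mathcal{F}$, exactly as in the proof of Theorem~\ref{Thm-NNB}. Combining this with Crouzeix's theorem (Theorem~\ref{thm_crouzeix}) gives, for \emph{every} $k$,
$$
|[f(A)]_{ij}-[\Pi_k(A)]_{ij}|\le \|f(A)-\Pi_k(A)\|_2\le \mathcal{Q}\,c(\varepsilon)\,(q+\varepsilon)^k.
$$
Now, for the lower-triangular part, take $i\ge j$ and choose $k$ to be the largest integer with $kp<i-j$, i.e.\ roughly $k=\lceil (i-j)/p\rceil-1$; then $[\Pi_k(A)]_{ij}=0$, so $|[f(A)]_{ij}|\le \mathcal{Q}\,c(\varepsilon)(q+\varepsilon)^k\le \mathcal{Q}\,c(\varepsilon)(q+\varepsilon)^{(i-j)/p-1}$. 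Setting $\lambda_1=(q+\varepsilon)^{1/p}<1$ and absorbing the lost factor into the constant yields $|[f(A)]_{ij}|\le \mathcal{Q}\,c(\varepsilon)(q+\varepsilon)^{-1}\lambda_1^{i-j}$. The same argument with $s$ in place of $p$ and $j-i$ in place of $i-j$ gives (\ref{uineq}) with $\lambda_2=(q+\varepsilon)^{1/s}$. Finally, to cover the near-diagonal entries where the chosen $k$ would be negative (i.e.\ $i-j\le p$, resp.\ $j-i\le s$) one enlarges $K$ to also dominate $\|f(A)\|_2$, so that the stated inequalities hold for all index pairs; concretely one can take $K=\max\{\|f(A)\|_2,\ \mathcal{Q}\,c(\varepsilon)(q+\varepsilon)^{-1}\}$.

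I do not expect any serious obstacle here: the entire argument is a bookkeeping refinement of Theorem~\ref{Thm-NNB}, and every analytic ingredient (Crouzeix's bound, the Faber/Bernstein approximation estimate, the bandwidth-multiplication property of powers) is already available. The one point that needs a little care is the handling of the small-index region and the off-by-one in the choice of $k$, which is why the prefactor $K$ must be taken as the stated maximum rather than simply $\mathcal{Q}\,c(\varepsilon)$; this is a routine adjustment. As in Theorem~\ref{Thm-NNB}, the bound is only meaningful when $K$, $\lambda_1$, $\lambda_2$ are independent of $n$, which holds provided $p$, $s$, and a continuum $\mathcal{F}\supset W(A)$ (with the associated $q$) can be chosen uniformly in $n$.
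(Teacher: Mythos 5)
Your argument is correct and is exactly the proof the paper intends: the paper only remarks that the result follows by combining the bandwidth-multiplication property (lower bandwidth $kp$, upper bandwidth $ks$ for degree-$k$ polynomials) with the Crouzeix--Bernstein argument of Theorem~\ref{Thm-NNB}, which is precisely your bookkeeping refinement with separate choices of $k$ for the lower and upper triangular parts. The only cosmetic points are the harmless off-by-one in $k$ and the diagonal entries, both of which you already absorb into the constant $K$.
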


The constants $\lambda_1$ and $\lambda_2$ depend on the position
of he poles of $f$ relative to the continuum $\mathcal{F}$; they
also depend, respectively, on
the lower and upper bandwidths $p$
and $s$ of $A$. 
For an upper Hessenberg matrix
($p=1$, $s=n$) only the bound (\ref{lineq}) 
is of interest,
particularly in the situation (important in applications) where
we consider sequences of matrices of increasing size.
Similarly, for a lower Hessenberg matrix ($s=1$, $p=n$)
only (\ref{uineq}) is meaningful.
More generally, the bounds are of interest when they are applied 
to sequences of $n\times n$ matrices $\{A_n\}$ for which either
$p$ or $s$ (or both) are fixed as $n$ increases, and such that there is a
fixed connected compact set $\mathcal{F}\subset \mathbb{C}$ containing
$W(A_n)$ for all $n$ and excluding the singularities of $f$
(if any). In this case the relevant constants 
in Theorem \ref{bineq} are independent of $n$, and we obtain
uniform exponential decay bounds.

Next, we seek to generalize Theorem \ref{Thm-NNB}
to the $C^*$-algebra setting. In order to do this, we need
to make some preliminary observations.
 If $T$ is a bounded linear operator on a Hilbert space 
$\mathcal{H}$, then its numerical range is defined as 
$W(T)=\{\langle Tx,x\rangle\,;\, x\in \mathcal{H}\,, \|x\|=1\}$. 
The generalization 
of the notion of numerical range
to $C^*$-algebras (see \cite{berberianorland}) is formulated via 
the Gelfand--Naimark representation: $a\in \mathcal{A}_0$ is 
associated with an operator $T_a$ defined on a suitable Hilbert 
space. Then $\overline{W(T_a)}$, the closure of $W(T_a)$, does not 
depend on the particular $*-$representation that has been chosen for 
$\mathcal{A}_0$. In other words, the closure of the numerical range is 
well defined for elements of $C^*$-algebras  
(whereas the numerical range itself, in general, is not). 
This applies, in particular, to elements of the $C^*$-algebra $\mathcal{A}=
\mathcal{A}_0^{n\times n}$.
 
Let us now consider a matrix $A\in \mathcal{A}$. 
In the following, we will need easily computable bounds on $\overline{W(A)}$. 
Theorem \ref{thm_matricial_norm} easily implies the following simple
result: 
 \begin{proposition}\label{prop_fov_inside_disk}
 Let $A\in\mathcal{A}$. Then $\overline{W(A)}$ is contained in the disk of center $0$ and radius $\|\hat{A}\|_2$.
 \end{proposition}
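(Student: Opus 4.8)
The plan is to relate the numerical range of $A$, viewed via the Gelfand--Naimark representation as an operator $T_A$ on the Hilbert space $\mathscr H = \mathcal H\oplus\cdots\oplus\mathcal H$, to the quantity $\|\hat A\|_2$, and then invoke the elementary fact that the closure of the numerical range of any bounded operator is contained in the disk of radius equal to the operator norm. Concretely, for a bounded operator $T$ on a Hilbert space one has $|\langle Tx,x\rangle|\le \|T\|$ for every unit vector $x$, so $\overline{W(T)}$ lies in the closed disk of center $0$ and radius $\|T\|$. Applying this to $T=T_A$, it suffices to show $\|A\| = \|T_A\|\le \|\hat A\|_2$, which is exactly item~1 of Theorem~\ref{thm_matricial_norm}. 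Since by the Gelfand--Naimark representation $\overline{W(A)}$ is by definition $\overline{W(T_A)}$ (and is independent of the chosen representation), this immediately gives the claim.

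So the proof is essentially three short steps. First I would recall that for a bounded operator $T$ on a Hilbert space, Cauchy--Schwarz gives $|\langle Tx,x\rangle|\le \|Tx\|\,\|x\| = \|Tx\|\le\|T\|$ for $\|x\|=1$, hence $W(T)$, and therefore its closure, is contained in $\{z\in\mathbb C : |z|\le\|T\|\}$. Second, I would apply this with $T=T_A$ the Gelfand--Naimark representative of $A\in\mathcal A$, recalling from Section~3 that $\|A\|=\|T_A\|$ by definition of the operator norm on $\mathcal A$, and that $\overline{W(A)}:=\overline{W(T_A)}$ is well defined (independent of representation), as discussed just before the statement. Third, I would invoke item~1 of Theorem~\ref{thm_matricial_norm}, namely $\|A\|\le\|\hat A\|_2$, to conclude that $\overline{W(A)}$ is contained in the disk of center $0$ and radius $\|\hat A\|_2$.

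There is really no substantial obstacle here: the only mild subtlety is making sure one uses the already-established bound $\|A\|\le\|\hat A\|_2$ rather than attempting to bound the numerical range directly in terms of $\hat A$ (which would require something like a numerical-range analogue of Theorem~\ref{thm_matricial_norm} and is unnecessary). The estimate $|\langle Tx,x\rangle|\le\|T\|$ is the crude containment $\overline{W(T)}\subseteq\{|z|\le\|T\|\}$, and combining it with Theorem~\ref{thm_matricial_norm} yields the Proposition in a couple of lines. One could also note in passing that this bound is generally far from tight — $\overline{W(A)}$ for a Hermitian $A$ is a real interval, whereas the disk of radius $\|\hat A\|_2$ is genuinely two-dimensional — but that refinement is not needed for the statement as given.
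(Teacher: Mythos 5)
Your argument is correct and matches the paper's (implicit) proof: the paper simply notes that Theorem~\ref{thm_matricial_norm} ``easily implies'' the Proposition, the intended reasoning being exactly your chain $\overline{W(A)}=\overline{W(T_A)}\subseteq\{|z|\le\|T_A\|\}=\{|z|\le\|A\|\}\subseteq\{|z|\le\|\hat A\|_2\}$, with the first inclusion from Cauchy--Schwarz and the last from item~1 of that theorem. Nothing further is needed.
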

 
We are now in a position to derive bounds valid in the general,
nonnormal case.

\subsection{Bounds for the nonnormal case}
   
 Our aim is to extend the results in the previous section
 to the case where $A$ is a 
matrix over a $C^*$-algebra. In \cite{crouzeix}, Crouzeix provides a 
useful generalization of his result from complex matrices to 
bounded linear operators on a Hilbert space $\mathcal{H}$. 
Given a set $E\subset 
\mathbb{C}$, denote by $\mathcal{H}_b(E)$ the algebra of continuous 
and bounded functions in $\overline{E}$ which are analytic in the 
interior of $E$. Furthermore, for $T\in \mathcal{B}(\mathcal{H})$ 
let $\|p\|_{\infty,T}:= \sup_{z\in \overline{W(T)}} |p(z)|$.
 Then we have (\cite{crouzeix}, Theorem 2):
\begin{theorem}\label{thm_crouseix2}
For any bounded linear operator 
$T\in\mathcal{B}(\mathcal{H})$ the homomorphism $p\mapsto p(T)$ from 
the algebra $\mathbb{C}[z]$, with norm $\|\cdot\|_{\infty,T}$, 
into the algebra $\mathcal{B}(\mathcal{H})$, 
is bounded with constant $\mathcal{Q}$. 
It admits a unique bounded extension from $\mathcal{H}_b(W(T))$ into 
$\mathcal{B}(\mathcal{H})$. This extension is bounded with constant $\mathcal{Q}$.
   \end{theorem}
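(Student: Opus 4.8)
The plan is to reduce the operator statement to the finite-dimensional Crouzeix inequality (Theorem~\ref{thm_crouzeix}) by a compression argument, and then to pass from polynomials to all of $\mathcal{H}_b(W(T))$ by a density argument combined with the bounded linear extension theorem.

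First I would establish the polynomial estimate $\|p(T)\|\le\mathcal{Q}\,\|p\|_{\infty,T}$ for every $p\in\mathbb{C}[z]$. Fix a unit vector $x\in\mathcal{H}$ and put $N=\deg p$. Let $\mathcal{K}:=\mathrm{span}\{x,Tx,\ldots,T^{N}x\}$, a finite-dimensional subspace of $\mathcal{H}$, let $P$ be the orthogonal projection of $\mathcal{H}$ onto $\mathcal{K}$, and set $B:=PT|_{\mathcal{K}}\in\mathcal{B}(\mathcal{K})$. A one-line induction shows $B^{j}x=T^{j}x$ for $0\le j\le N$ (because $T^{j}x\in\mathcal{K}$ for those $j$, so $B^{j+1}x=PT(T^{j}x)=T^{j+1}x$ as long as $j+1\le N$), and hence $p(B)x=p(T)x$. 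Moreover $W(B)\subseteq W(T)$: for a unit vector $u\in\mathcal{K}$ one has $\langle Bu,u\rangle=\langle PTu,u\rangle=\langle Tu,Pu\rangle=\langle Tu,u\rangle\in W(T)$, so $\overline{W(B)}\subseteq\overline{W(T)}$. Identifying $\mathcal{K}$ with $\mathbb{C}^{\dim\mathcal{K}}$ and applying Theorem~\ref{thm_crouzeix} to $B$ with $\mathcal{F}=\overline{W(B)}$ (a convex compact set) and $g=p$ gives $\|p(B)\|_2\le\mathcal{Q}\sup_{z\in\overline{W(B)}}|p(z)|\le\mathcal{Q}\,\|p\|_{\infty,T}$. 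Therefore $\|p(T)x\|=\|p(B)x\|\le\|p(B)\|_2\le\mathcal{Q}\,\|p\|_{\infty,T}$, and taking the supremum over all unit vectors $x$ yields $\|p(T)\|\le\mathcal{Q}\,\|p\|_{\infty,T}$.

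Next I would extend the homomorphism to $\mathcal{H}_b(W(T))$. Since $T$ is bounded, $\overline{W(T)}$ is a compact convex subset of $\mathbb{C}$, hence has connected complement; by Mergelyan's theorem the polynomials are dense, in the supremum norm on $\overline{W(T)}$, in the algebra of functions continuous on $\overline{W(T)}$ and analytic in its interior, i.e.\ dense in $\mathcal{H}_b(W(T))$. Thus $p\mapsto p(T)$ is a bounded linear map of norm at most $\mathcal{Q}$ from the dense subspace $\mathbb{C}[z]$ of the Banach space $\mathcal{H}_b(W(T))$ into $\mathcal{B}(\mathcal{H})$; by the bounded linear extension theorem it has a unique continuous extension to $\mathcal{H}_b(W(T))$, with the same bound $\mathcal{Q}$. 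Multiplicativity of the extension is then automatic: if $p_n\to f$ and $q_n\to g$ uniformly on $\overline{W(T)}$ with $p_n,q_n$ polynomials, then $p_nq_n\to fg$ uniformly on the compact set $\overline{W(T)}$, and passing to the limit in $(p_nq_n)(T)=p_n(T)\,q_n(T)$ gives the product rule.

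The step I expect to be the main obstacle is the compression argument: one has to choose $\mathcal{K}$ large enough (depending on $\deg p$) that $p(B)$ reproduces $p(T)$ exactly on the chosen vector $x$, while still keeping $\overline{W(B)}$ inside $\overline{W(T)}$, and it is the compatibility of these two requirements that makes the reduction to the matrix case go through. Convexity of $W(T)$ is essential at two points — it is what lets Theorem~\ref{thm_crouzeix} be applied on $\overline{W(B)}\subseteq\overline{W(T)}$, and it is what makes Mergelyan's theorem applicable so that polynomial density (and hence the uniqueness of the extension) holds.
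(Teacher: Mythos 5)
Your argument is correct, but it is worth noting that the paper does not prove this statement at all: it is quoted verbatim as Theorem~2 of \cite{crouzeix}, where Crouzeix establishes the bound directly for bounded operators on a Hilbert space. What you have done instead is derive the operator statement from the finite-dimensional inequality (Theorem~\ref{thm_crouzeix}) already recorded in the paper, and your reduction is sound: the compression $B=PT|_{\mathcal{K}}$ onto the Krylov subspace $\mathcal{K}=\mathrm{span}\{x,Tx,\dots,T^{N}x\}$ satisfies $B^{j}x=T^{j}x$ for $j\le N$, hence $p(B)x=p(T)x$, and $\langle Bu,u\rangle=\langle Tu,u\rangle$ for unit $u\in\mathcal{K}$ gives $\overline{W(B)}\subseteq\overline{W(T)}$, so the matrix inequality with the dimension-independent constant $\mathcal{Q}$ yields $\|p(T)\|\le\mathcal{Q}\,\|p\|_{\infty,T}$; this compression trick is exactly the standard argument showing that the matrix case of Crouzeix's inequality implies the operator case with the same constant. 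The passage to $\mathcal{H}_b(W(T))$ via Mergelyan (legitimate because $\overline{W(T)}$ is compact and convex, hence has connected complement) and the bounded-extension theorem is also fine, and your limiting argument even recovers multiplicativity, which the statement does not demand. Two small points you may wish to make explicit: for constant polynomials (or when $W(T)$ degenerates to a point) the bound is trivial since $\mathcal{Q}\ge 2\ge 1$, and Mergelyan is applied to functions analytic in the interior of $\overline{W(T)}$, which coincides with the interior of $W(T)$ because $W(T)$ is convex; neither affects the validity of the proof. Compared with the paper's citation, your route buys self-containedness (given the matrix-level Theorem~\ref{thm_crouzeix}) at the cost of invoking Mergelyan, while Crouzeix's original proof treats the operator setting directly without any finite-dimensional reduction.
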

  
Using again the Gelfand--Naimark representation together with the
notion of numerical range for elements of $\cal A$, we obtain
as a consequence:
  
\begin{corollary}\label{corollary1}
Given $A\in\mathcal{A}$, the following bound holds 
for any complex function $g$ analytic on a neighborhood of
$\overline{W(A)}$:
$$
  \|g(A)\|\leq {\cal Q} \|g\|_{\infty,A}=
{\cal Q} \sup_{z\in \overline{W(A)}}|g(z)|.
  $$
  \end{corollary}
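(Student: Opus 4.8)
The plan is to deduce Corollary~\ref{corollary1} directly from Theorem~\ref{thm_crouseix2} by transporting everything through the Gelfand--Naimark representation of the $C^*$-algebra $\mathcal{A} = \mathcal{A}_0^{n\times n}$. First I would fix a faithful isometric $*$-representation $\pi \colon \mathcal{A} \hookrightarrow \mathcal{B}(\mathscr{H})$ with $\mathscr{H}$ a suitable complex Hilbert space, as guaranteed by Theorem~\ref{thm_gn}; write $T := \pi(A) \in \mathcal{B}(\mathscr{H})$. By the discussion preceding the corollary, the closure $\overline{W(A)}$ is well defined (independent of the representation) and equals $\overline{W(T)}$, so the hypothesis that $g$ is analytic on a neighborhood of $\overline{W(A)}$ is exactly the statement that $g$ is analytic on a neighborhood of $\overline{W(T)}$; in particular $g \in \mathcal{H}_b(W(T))$ (shrinking the neighborhood to an open set $E$ with $W(T) \subset E$ and $g$ bounded and continuous on $\overline{E}$, analytic on $E$).

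The second step is to identify $\pi(g(A))$ with $g(T)$, where $g(T)$ is the element of $\mathcal{B}(\mathscr{H})$ produced by the bounded functional calculus of Theorem~\ref{thm_crouseix2}. On polynomials this is immediate: $\pi$ is an algebra homomorphism, so $\pi(p(A)) = p(\pi(A)) = p(T)$ for every $p \in \mathbb{C}[z]$. To pass from polynomials to $g$, I would invoke the holomorphic functional calculus of Section~5: since $g$ is analytic on a neighborhood of $\sigma(A) \subseteq \overline{W(A)}$, the element $g(A)$ is defined by the contour integral \eqref{integral_def}, and $\pi$, being a bounded (indeed isometric) algebra homomorphism, commutes with the norm-convergent Riemann sums defining that integral and with $(zI_n - A)^{-1} \mapsto (zI - T)^{-1}$; hence $\pi(g(A)) = g(T)$ in the sense of the holomorphic functional calculus on $\mathcal{B}(\mathscr{H})$. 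Finally, the uniqueness clause in Theorem~\ref{thm_crouseix2} guarantees that this holomorphic-functional-calculus value of $g(T)$ coincides with the value assigned by Crouzeix's bounded extension from $\mathcal{H}_b(W(T))$, because both extensions agree with $p \mapsto p(T)$ on polynomials and are bounded (the holomorphic one because $g$ can be uniformly approximated on $\overline{W(T)}$ by polynomials, e.g.\ Faber partial sums as in Theorem~\ref{BT}, together with the $\mathcal{Q}$-boundedness).

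The conclusion then follows by a one-line chain of (in)equalities: using that $\pi$ is isometric and Theorem~\ref{thm_crouseix2},
$$
\|g(A)\| = \|\pi(g(A))\| = \|g(T)\| \le \mathcal{Q}\,\|g\|_{\infty,T} = \mathcal{Q}\sup_{z\in\overline{W(T)}}|g(z)| = \mathcal{Q}\sup_{z\in\overline{W(A)}}|g(z)| = \mathcal{Q}\,\|g\|_{\infty,A}.
$$

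I expect the only genuine subtlety — everything else being bookkeeping — to be the identification in the second step: making sure that the element called $g(A)$ (defined via the holomorphic functional calculus on a neighborhood of the \emph{spectrum}) is the same as the element to which Crouzeix's extension applies (defined on $\mathcal{H}_b$ of the \emph{numerical range}). This is legitimate precisely because $\sigma(A) \subseteq \overline{W(A)}$, so any $g$ admissible for the corollary is admissible for both calculi, and the uniqueness of the bounded extension in Theorem~\ref{thm_crouseix2} pins the two down to be equal; I would state this carefully rather than gloss over it. Everything else (the representation-independence of $\overline{W(A)}$, the fact that $\pi$ intertwines resolvents and contour integrals) has already been recorded in the preceding paragraphs of the excerpt and can simply be cited.
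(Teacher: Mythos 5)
Your proposal is correct and follows essentially the same route as the paper, which deduces the corollary directly from Theorem~\ref{thm_crouseix2} via the Gelfand--Naimark representation, the representation-independence of $\overline{W(A)}$, and the isometry of the embedding; the paper simply states this as an immediate consequence, while you spell out the compatibility of the holomorphic functional calculus with Crouzeix's bounded extension. The only caveat is that your justification of that compatibility leans on the $\mathcal{Q}$-bound for the holomorphic calculus (slightly circular as phrased); it is cleaner to note that, $\overline{W(T)}$ being convex, Runge's theorem gives polynomials converging uniformly on a compact neighborhood containing the integration contour, so both calculi are limits of the same $p_k(T)$.
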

  
Since we wish to obtain bounds on $\|[f(A)]_{ij}\|$, where the function 
$f(z)$ can be assumed to be analytic on an open set $S\supset\overline{W(A)}$, 
we can choose $g(z)$ in Corollary \ref{corollary1} as $f(z)-p_k(z)$, 
where $p_k(z)$ is any complex polynomial of degree bounded by $k$. 
The argument in (\ref{step1})--(\ref{step5}) can then be adapted as follows:
  \begin{eqnarray}
    \|[f(A)]_{ij}\|&=&  \|[f(A)-p_k(A)]_{ij}\|\label{step1nn}\\
    &\leq&\|f(A)-p_k(A)\|\label{step2nn}\\
    &\leq& {\cal Q}\, \|f-p_k\|_{\infty,A}\label{step3nn}\\
    &=&{\cal Q} \sup_{z\in \overline{W(A)}}|f(z)-p_k(z)|\label{step4nn}\\
    &\leq& {\cal Q}\, E_k(f,\overline{W(A)}),\label{step5nn}
    \end{eqnarray}
  where $E_k(f,\overline{W(A)})$ is the degree $k$ best approximation 
error for $f$ on the compact set $\overline{W(A)}$. In order to make explicit 
computations easier, we may of course replace  $\overline{W(A)}$ with 
a larger but more manageable set in the above argument, 
as long as the approximation theory results used in the proof
of Theorem \ref{Thm-NNB} can be applied.

  \begin{theorem}\label{thm_nn1}
Let $A\in\mathcal{A}$ be an $n\times n$ matrix of bandwidth $m$ 
and let the function $f$ and the continuum $\mathcal{F}\supset
\overline{W(A)}$ satisfy the assumptions of Theorem \ref{Thm-NNB}.
Then there exist explicitly computable constants $K>0$ and $0<\lambda<1$ such 
that
  $$
     \|[f(A)]_{ij}\|\leq K\lambda^{|i-j|}
  $$ 
  for all $1\leq i,j\leq n$. 
    \end{theorem}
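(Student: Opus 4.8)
The plan is to combine the chain of inequalities (\ref{step1nn})--(\ref{step5nn}) with the approximation-theoretic machinery already assembled in Section \ref{general}, exactly as in the proof of Theorem \ref{Thm-NNB}, the only difference being that the role of the spectral norm $\|\cdot\|_2$ and Crouzeix's theorem (Theorem \ref{thm_crouzeix}) is now played by the operator norm on $\mathcal{A}$ and Corollary \ref{corollary1}. So the first step is to observe that, because $A$ has bandwidth $m$, for any polynomial $p_k$ of degree at most $k$ the matrix $p_k(A)$ has bandwidth at most $km$; hence for every pair $(i,j)$ with $|i-j|>km$ the entry $[p_k(A)]_{ij}$ is the zero element of $\mathcal{A}_0$, and therefore $\|[f(A)]_{ij}\| = \|[f(A)-p_k(A)]_{ij}\| \le \|f(A)-p_k(A)\| \le \mathcal{Q}\,\|f-p_k\|_{\infty,A} \le \mathcal{Q}\, E_k(f,\mathcal{F})$, where in the last step we used $\overline{W(A)}\subseteq \mathcal{F}$ so that the sup over $\overline{W(A)}$ is dominated by the sup over $\mathcal{F}$.

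The second step is to invoke Bernstein's theorem in the Faber-polynomial form (Theorem \ref{BT}): since $f$ is analytic on $I(C_{R_0})$ with $R_0=\rho/q$, $0<q<1$, given any $\varepsilon>0$ there is a sequence of polynomials $\Pi_k$ of degree at most $k$ with $|f(z)-\Pi_k(z)| \le c(\varepsilon)(q+\varepsilon)^k$ for all $z\in\mathcal{F}$, where $\varepsilon$ is chosen small enough that $q+\varepsilon<1$. Taking $p_k=\Pi_k$ in the chain above yields $\|[f(A)]_{ij}\| \le \mathcal{Q}\,c(\varepsilon)(q+\varepsilon)^k$ whenever $|i-j|>km$. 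The third step is the routine bookkeeping that converts this into a bound in $|i-j|$: given indices $i,j$, set $k = \lfloor |i-j|/m \rfloor$, so that $|i-j|>km$ fails only in the trivial range $|i-j|\le m$; for $|i-j|>m$ one gets $\|[f(A)]_{ij}\| \le \mathcal{Q}\,c(\varepsilon)(q+\varepsilon)^{(|i-j|/m)-1}$, and absorbing the harmless factor $(q+\varepsilon)^{-1}$ and handling the finitely many near-diagonal entries by the crude bound $\|[f(A)]_{ij}\|\le\|f(A)\|$ gives the stated inequality with $\lambda = (q+\varepsilon)^{1/m}\in(0,1)$ and $K = \max\{\|f(A)\|,\ \mathcal{Q}\,c(\varepsilon)(q+\varepsilon)^{-1}\}$.

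There is really no single hard step here: the substance of the argument — the holomorphic functional calculus on $\mathcal{A}$, the generalized Crouzeix bound (Corollary \ref{corollary1}), and the Faber-polynomial approximation estimate (Theorem \ref{BT}) — has all been developed in the preceding sections, so the proof is essentially an assembly job. If anything, the one point requiring a moment's care is making sure the numerical-range inclusion is used in the right direction: Corollary \ref{corollary1} produces $\|g(A)\|\le\mathcal{Q}\sup_{\overline{W(A)}}|g|$, and then the passage to $E_k(f,\mathcal{F})$ needs $\overline{W(A)}\subseteq\mathcal{F}$, which is precisely the hypothesis inherited from Theorem \ref{Thm-NNB}; one should note that such an $\mathcal{F}$ always exists (e.g. a disk of radius $\|\hat A\|_2$, by Proposition \ref{prop_fov_inside_disk}), although for sharpness one wants it as small as possible. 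The explicit computability of $K$ and $\lambda$ follows because $c(\varepsilon)$ and $q$ are explicitly available for convex $\mathcal{F}$, as recalled in the text following Theorem \ref{BT}.
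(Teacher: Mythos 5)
Your proof is correct and follows essentially the same route as the paper: the chain (\ref{step1nn})--(\ref{step5nn}) based on Corollary \ref{corollary1} (the $C^*$-algebra version of Crouzeix's bound), combined with the Faber/Bernstein approximation estimate of Theorem \ref{BT} and the same bandwidth bookkeeping used for Theorem \ref{Thm-NNB}. Your explicit constants $K=\max\{\|f(A)\|,\,\mathcal{Q}\,c(\varepsilon)(q+\varepsilon)^{-1}\}$ and $\lambda=(q+\varepsilon)^{1/m}$ match the paper's (indeed your handling of the exponent via $k=\lfloor |i-j|/m\rfloor$ is slightly more careful than the paper's statement $(q+\varepsilon)^{|i-j|/m}$).
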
 
  
A simple approach to the computation of constants $K$ and $\lambda$ 
goes as follows. It follows from Proposition \ref{prop_fov_inside_disk} 
that the set $\mathcal{F}$ in Theorem \ref{thm_nn1} can be chosen as 
the disk of center $0$ and radius $r=\|\hat{A}\|_2$. Assume that $f(z)$ 
is analytic on an open neighborhood of the disk of center $0$ and 
radius $R>r$. The standard theory of complex Taylor 
series
gives the following estimate for the Taylor approximation error 
\cite[Corollary 2.2]{Ell83}:
\begin{equation}
E_k(f,\mathcal{C})\leq\frac{M(R)}{1-\frac{r}{R}}\left(\frac{r}{R}\right)^{k+1},
\label{eq_taylor}
\end{equation}
where $M(R)=\max_{|z|=R}|f(z)|$.
Therefore we can choose 
  $$K=\max\left\{\|f(A)\|,\,\mathcal{Q}\,M(R)\frac{r}{R-r}\right\},
\qquad\lambda=\left(\frac{r}{R}\right)^{1/m}.$$

The choice of the parameter $R$ in \eqref{eq_taylor} is somewhat 
arbitrary: any value of $R$ will do, as long as $r<R<\min|\zeta|$, 
where $\zeta$ varies over the poles of $f$ (if $f$ is entire,
we let $\min|\zeta|=\infty$). Choosing as large a
value of $R$ as possible gives a better asymptotic decay rate, 
but also a potentially 
large constant $K$. For practical purposes, one may therefore want to 
pick a value of $R$ that ensures a good trade-off between the magnitude 
of $K$ and the decay rate: see the related discussion in  
\cite{BR07} and \cite{BBR13}.

As in the previous section, we are also interested in the case of a 
sequence $\{A_n\}_{n\in\mathbb{N}}$ of matrices of increasing size 
over $\mathcal{A}_0$. In order to obtain a uniform bound, we reformulate 
Corollary \ref{corollary1} as follows.
  
\begin{corollary}\label{corollary2}
Let $\{A_n\}_{n\in\mathbb{N}}$ be a sequence of $n\times n$ matrices
over a $C^*$-algebra $\mathcal{A}_0$ such that there exists a 
connected compact set ${\cal C}\subset\mathbb{C}$ that contains 
$\overline{W(A_n)}$ for all $n$, and let 
$g$ be a complex function analytic on a neighborhood of
$\cal C$. The following uniform bound holds:
  $$
  \|g(A_n)\|\leq \mathcal{Q} \|g\|_{\infty,{\cal C}}=
\mathcal{Q}\sup_{z\in {\cal C}}|g(z)|.
  $$
  \end{corollary}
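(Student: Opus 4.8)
The plan is to reduce the uniform statement for the sequence $\{A_n\}$ to the single-matrix statement of Corollary \ref{corollary1}, which is itself an immediate consequence of Theorem \ref{thm_crouseix2} via the Gelfand--Naimark representation. The key observation is that the hypothesis now packages \emph{two} uniformity assumptions into one set $\mathcal{C}$: first, that $\overline{W(A_n)} \subseteq \mathcal{C}$ for every $n$; and second, that $g$ is analytic on a fixed neighborhood of $\mathcal{C}$, hence on a fixed neighborhood of each $\overline{W(A_n)}$, so that $g(A_n)$ is well defined for all $n$ through the holomorphic functional calculus (no $n$-dependent analyticity region is needed). Thus the quantity $\sup_{z\in\mathcal{C}}|g(z)|$ is a single finite number $\|g\|_{\infty,\mathcal{C}}$, independent of $n$.

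First I would fix $n$ and apply Corollary \ref{corollary1} directly to $A = A_n$, obtaining
\[
\|g(A_n)\| \le \mathcal{Q}\,\|g\|_{\infty,A_n} = \mathcal{Q} \sup_{z\in\overline{W(A_n)}}|g(z)|.
\]
Then, since $\overline{W(A_n)} \subseteq \mathcal{C}$, I would bound the right-hand side by enlarging the domain of the supremum:
\[
\sup_{z\in\overline{W(A_n)}}|g(z)| \le \sup_{z\in\mathcal{C}}|g(z)| = \|g\|_{\infty,\mathcal{C}}.
\]
Combining the two inequalities gives $\|g(A_n)\| \le \mathcal{Q}\,\|g\|_{\infty,\mathcal{C}}$ for every $n$, which is exactly the claim. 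The only point requiring a word of care is that $g$ must be analytic on a neighborhood of $\overline{W(A_n)}$ for Corollary \ref{corollary1} to apply at all; this is guaranteed because $g$ is analytic on a neighborhood of $\mathcal{C} \supseteq \overline{W(A_n)}$, and it also ensures that $\|g\|_{\infty,\mathcal{C}}$ is finite (since $g$ is continuous on the compact set $\mathcal{C}$).

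There is essentially no obstacle here: the statement is a routine ``uniformization'' of Corollary \ref{corollary1}, and the proof is two lines of monotonicity of the supremum under enlarging the domain. If anything, the subtlety worth flagging explicitly is the implicit use of the fact (noted earlier in the excerpt, following Theorem \ref{thm_crouseix2}) that $\overline{W(A_n)}$ is well defined for elements of the $C^*$-algebra $\mathcal{A} = \mathcal{A}_0^{n\times n}$ independently of the chosen Gelfand--Naimark representation, so that $\|g\|_{\infty,A_n}$ is itself unambiguous; but this has already been established and can simply be invoked.
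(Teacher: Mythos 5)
Your proof is correct and matches the paper's intent: the paper presents Corollary \ref{corollary2} as a direct reformulation of Corollary \ref{corollary1}, obtained exactly as you do, by applying the single-matrix bound to each $A_n$ and enlarging the supremum from $\overline{W(A_n)}$ to the common compact set $\mathcal{C}$. No discrepancy to report.
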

  
We then have a version of Theorem \ref{thm_nn1} for matrix sequences 
having uniformly bounded bandwidths and fields of values:
\begin{theorem}\label{thm_nn2}
Let $\{A_n\}_{n\in\mathbb{N}}\subset\mathcal{A}$ be a sequence of 
$n\times n$ matrices over a $C^*$-algebra $\mathcal{A}_0$ with bandwidths uniformly
bounded by $m$. Let the complex 
function $f(z)$ be analytic on a neighborhood of a connected compact set 
$\mathcal{C}\subset\mathbb{C}$ containing $\overline{W(A_n)}$ for 
all $n$. Then there exist explicitly computable constants $K>0$ and 
$0<\lambda<1$, independent of $n$, such that
$$
     \|[f(A_n)]_{ij}\|\leq \mathcal{Q} \, E_k(f,\mathcal{C})\leq K\lambda^k
$$ 
for all indices $i,j$. 
\end{theorem}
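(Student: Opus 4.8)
The plan is to follow the proof of Theorem~\ref{thm_nn1} essentially verbatim, the only extra point being to check that every constant that appears can be chosen independently of $n$; the tool that makes this possible is the uniform bound of Corollary~\ref{corollary2}.

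First I would fix a pair of indices $i,j$ together with an integer $k\ge 0$ subject to $km<|i-j|$. Since each $A_n$ has bandwidth at most $m$, any polynomial $p_k\in\mathbb{P}_k$ produces a matrix $p_k(A_n)$ of bandwidth at most $km$, so $[p_k(A_n)]_{ij}=0$. Because $f$ is analytic on a fixed neighborhood of $\mathcal{C}$ and $p_k$ is entire, the function $g=f-p_k$ is analytic on that neighborhood, and Corollary~\ref{corollary2} (applied with this $g$ and this $\mathcal{C}$, which contains $\overline{W(A_n)}$ for every $n$) gives, exactly as in the chain (\ref{step1nn})--(\ref{step5nn}),
$$\|[f(A_n)]_{ij}\|=\|[f(A_n)-p_k(A_n)]_{ij}\|\le\|f(A_n)-p_k(A_n)\|\le\mathcal{Q}\sup_{z\in\mathcal{C}}|f(z)-p_k(z)|.$$
Minimizing over $p_k\in\mathbb{P}_k$ yields $\|[f(A_n)]_{ij}\|\le\mathcal{Q}\,E_k(f,\mathcal{C})$, and this bound no longer refers to $n$: the only place $n$ entered was through the field of values, which has been swallowed by the fixed set $\mathcal{C}$.

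Second, I would invoke Bernstein's theorem, Theorem~\ref{BT}, for the continuum $\mathcal{C}$. Writing $\rho$ for the logarithmic capacity of $\mathcal{C}$, the hypothesis that $f$ is analytic on a neighborhood of $\mathcal{C}$ means $f$ is analytic on $I(C_{R_0})$ for some $R_0>\rho$; hence there is a $q\in(0,1)$ with $R_0=\rho/q$ and, for each $\varepsilon>0$, a constant $c(\varepsilon)>0$ with $E_k(f,\mathcal{C})\le c(\varepsilon)(q+\varepsilon)^k$ for all $k$. Combining with the previous step, $\|[f(A_n)]_{ij}\|\le\mathcal{Q}\,c(\varepsilon)(q+\varepsilon)^k$ whenever $km<|i-j|$, with $\mathcal{Q}$, $q$ and $c(\varepsilon)$ all free of $n$. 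Converting $k$-decay to $|i-j|$-decay as in the proof of Theorem~\ref{Thm-NNB} --- choose $k$ of order $|i-j|/m$, set $\lambda=(q+\varepsilon)^{1/m}\in(0,1)$, and absorb the small-$|i-j|$ pairs into the prefactor via the a~priori estimate $\|[f(A_n)]_{ij}\|\le\|f(A_n)\|\le\mathcal{Q}\sup_{z\in\mathcal{C}}|f(z)|$ (itself an instance of Corollary~\ref{corollary2}) --- one obtains $\|[f(A_n)]_{ij}\|\le K\lambda^{|i-j|}$ for all $i,j$ and all $n$, with $K$ and $\lambda$ independent of $n$. If one prefers to work with a disk, Proposition~\ref{prop_fov_inside_disk} allows $\mathcal{C}$ to be taken as a fixed disk (provided $\sup_n\|\widehat{A_n}\|_2<\infty$), and then the Taylor estimate (\ref{eq_taylor}) makes $K$ and $\lambda$ completely explicit, exactly as in the discussion following Theorem~\ref{thm_nn1}.

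I do not anticipate a genuine obstacle: the substantive work --- generalizing Crouzeix's inequality from complex matrices to matrices over a $C^*$-algebra, using the numerical range defined through the Gelfand--Naimark representation --- has already been carried out in Corollary~\ref{corollary2}. What must be watched is uniformity, and it holds for a single reason: the continuum $\mathcal{C}$ is fixed and contains every $\overline{W(A_n)}$, so $\mathcal{Q}$ (universal), $q$ and $c(\varepsilon)$ (depending only on $f$ and $\mathcal{C}$), and the a~priori bound on $\|f(A_n)\|$ are all $n$-free. The only minor bookkeeping is the range of indices with $|i-j|$ too small for any admissible $k$ to exist, which is disposed of by enlarging $K$.
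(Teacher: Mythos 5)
Your proposal is correct and matches the paper's own route: the paper obtains Theorem~\ref{thm_nn2} exactly by replacing $\overline{W(A_n)}$ with the fixed continuum $\mathcal{C}$ in the chain (\ref{step1nn})--(\ref{step5nn}), using the uniform Crouzeix bound of Corollary~\ref{corollary2}, and then invoking the same approximation-theory estimates (Bernstein/Faber, or the Taylor bound on a disk via Proposition~\ref{prop_fov_inside_disk}) to make $K$ and $\lambda$ explicit and $n$-independent. Your handling of the small-$|i-j|$ indices through the prefactor is the same bookkeeping the paper performs in Theorems~\ref{Thm-NNB} and~\ref{thm_nn1}.
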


In other words: as long as the singularities of $f$ (if any) 
stay bounded away from a fixed compact set $\mathcal{C}$ containing
the union of all the sets $\overline{W(A_n)}$, and as long as 
the matrices $A_n$ have bandwidths less than a fixed integer
$m$, the entries of $f(A_n)$ decay exponentially fast
away from the main diagonal, at a rate bounded below by a 
fixed positive constant as $n\to \infty$. The larger the distance between the
singularities of $f$ and the compact $\mathcal{C}$, the larger
this constant is.


Finally, it is straightforward to generalize Theorem \ref{bineq} to the case
of matrices over a general $C^*$-algebra.  
This completes the desired extension to the $C^*$-algebra
setting of the known exponential decay results for analytic functions of
banded matrices over $\mathbb{R}$ or $\mathbb{C}$.


\section{The case of quaternion matrices} \label{quater}
Matrices over the real division algebra $\mathbb{H}$ of quaternions 
have many interesting properties; see, e.g., \cite{Loring,Zhang} and 
the references therein, as well as \cite{LeBihan} for a recent application
of quaternion matrices to signal processing. There is, however, very
little in the literature about functions of matrices over $\mathbb{H}$,
except for the very special case of the inverse $f(A)=A^{-1}$
of an invertible quaternion matrix. 
This is no doubt due to the fact that fundamental difficulties arise
even in the scalar ($n=1$) case when attempting to extend the classical
theory of complex analytic functions to functions of a quaternion
variable \cite{Deavours,Sudbery}. 

The formal evaluation of functions of matrices with quaternion entries was
considered by Giscard and coworkers in \cite{giscard}, the same paper 
that raised the question 
that led to the present work. In order to even state a
meaningful generalization of our decay results 
for analytic functions of banded (or sparse)
matrices over $\mathbb{R}$ or $\mathbb{C}$ to matrices over $\mathbb{H}$,
we first need to restrict the class of functions under consideration 
to those analytic functions that can be expressed by convergent power
series with {\em real} coefficients. In this case, no ambiguity can
arise when considering functions of a matrix of the form
$$f(A) = a_0 I_n + a_1 A + a_2 A^2 + \cdots + a_k A^k +\cdots , 
\quad  A\in \mathbb{H}^{n\times n},$$
since the real field $\mathbb{R}$ is the center of the quaternion
algebra $\mathbb{H}$. In contrast, for functions expressed by power
series with (say) complex coefficients we would have to distinguish between
\lq\lq left\rq\rq\ and \lq\lq right\rq\rq\ power
series, since $a_k A^k\neq A^k a_k$ in general.
Fortunately, many of the most important functions (like the exponential, the
logarithm, the trigonometric and hyperbolic functions and their inverses, etc.)
can be represented by power series with real coefficients.

Next, we note that the quaternion algebra $\mathbb{H}$ is not a
$C^*$-algebra; first of all, it's a real algebra (not a complex one), and 
second, it is a noncommutative division algebra. The Gelfand--Mazur
Theorem \cite{MC} states that a $C^*$-algebra which is a division 
algebra is
$*$-isomorphic to $\mathbb{C}$ and thus it is necessarily commutative.
Hence, we cannot immediately apply the results from the previous
sections to functions of quaternion matrices.

To obtain the desired generalization, we make use of the fact that  
quaternions can be regarded as 
$2\times 2$ matrices over $\mathbb{C}$  
through the following representation:
$$
\mathbb{H}=\left\{q=a+b\,{\rm i}+c\,{\rm j}+d\,{\rm k}; 
a,b,c,d\in\mathbb{R}\right\}\cong
\left\{Q=\left(\begin{array}{cc}
a+b\,{\rm i} & c+d\,{\rm i}\\
-c+d\,{\rm i} & a-b\,{\rm i}
\end{array}\right)\right\}.
$$
The modulus (or norm) of a quaternion is given by $|q|=\sqrt{a^2+b^2+c^2+d^2}=
\|Q\|_2$, where $Q$ is the matrix associated with $q$.

Thus, we represent matrices over quaternions as complex block 
matrices with blocks of size $2\times 2$. In this way the {\rm real} algebra
$\mathbb{H}^{n\times n}$ with the natural operator norm
$$\|A\| = \sup_{x\ne 0} \frac{\|Ax\|}{\|x\|}, 
\quad x=(x_1,\ldots ,x_n)\in \mathbb{H}^n,
\quad \|x\| = \left (\sum_{i=1}^n |x_i|^2\right )^{\frac{1}{2}},$$
is isomorphic to a norm-closed real subalgebra 
$\mathcal{B}$ of the
$C^*$-algebra $\mathcal{A} = \mathbb{C}^{2n\times 2n}$. 
The operator norm of an $n\times n$ quaternion matrix $A$ 
turns out to coincide with the spectral norm
of the $2n\times 2n$ complex matrix $\varphi (A)$ that corresponds to $A$ in this
representation: $\|A\| = \|\varphi (A)\|_2$  
(see \cite[Theorem 4.1]{Loring}).

Let now $f$ be a function that can be expressed by a power series
$f(z) = \sum_{k=0}^\infty a_kz^k$ with $a_k\in \mathbb{R}$, and assume 
that the power series has radius of convergence $R>\|A\| = \|\varphi (A)\|_2$.
Then the function $f(A)$ is well defined,\footnote{Since the subalgebra
of the $C^*$-algebra $\mathbb{C}^{2n\times 2n}$ that corresponds via
$\varphi$ to $\mathbb{H}^{n\times n}$ is closed under linear combinations
with real coefficients and norm-closed, the matrix $f(A)$ is a well-defined
quaternion matrix that satisfies
$\varphi (f(A)) = f(\varphi (A))$.}
and is given by the convergent
power series $f(A) = \sum_{k=0}^\infty a_kA^k$.

The theory developed in sections \ref{herm}-\ref{general} can now be
applied to obtain the desired exponential decay bounds for functions
of banded quaternion matrices, at least for 
those analytic functions that
can be expressed by convergent power series with real coefficients.

\section{General sparsity patterns}

Following \cite{BR07} and \cite{BBR13}, we sketch an adaptation of 
Theorems \ref{thm_h1} and \ref{thm_nn1} to the case where the $n\times n$ 
matrix $A\in\mathcal{A}$ is not necessarily banded, but it has a more 
general sparsity pattern. 

Recall that the graph $G_A$ associated with $A$ is defined as follows:
\begin{itemize}
\item
$G_A$ has $n$ nodes,
\item
nodes $i$ and $j$ are connected by an edge if and only if $a_{ij}\neq 0$.
\end{itemize}
The \emph{geodetic distance} $d(i,j)$ between nodes $i$ and $j$ is 
the length of the shortest path connecting node $i$ to node $j$. If 
there is no path from $i$ to $j$, then we set $d(i,j)=\infty$. Observe 
that in general $d(i,j)\neq d(j,i)$, unless the sparsity pattern of 
$A$ is symmetric.

Also recall that the \emph{degree} of a node $i$ is the number of 
nodes of $G_A$ that are directly connected by an edge to node $i$, that is, 
the number of neighbors of node $i$. It is equal to the number of 
nonzero entries in the $i$-th row of $A$. 

Let $a_{ij}^{(k)}$ be the $(i,j)$-th entry of the matrix $A^k$. It 
can be proved that $a_{ij}^{(k)}=0$ whenever $d(i,j)>k$, for all positive 
integers $k$. In particular, if  $d(i,j)>k$ then the $(i,j)$-th entry of 
$p_k(A)$ is zero, for any polynomial $p_k(z)$ of degree bounded by $k$. 
Therefore, equations \eqref{step1} and \eqref{step1nn} still hold if 
the condition $|i-j|>km$ is replaced by $d(i,j)>k$. Bounds for 
$\|[f(A)]_{ij}\|$ are then obtained in a straightforward way: we have
$$
\|[f(A)]_{ij}\|\leq c\,\xi^{d(i,j)}
$$
for the Hermitian case, and
$$
\|[f(A)]_{ij}\|\leq K\,\lambda^{d(i,j)}
$$
for the non-Hermitian case, where the constants $c,\xi,K,\eta$ are 
the same as in Theorems \ref{thm_h1} and \ref{thm_nn1} and their proofs.

Results for functions of sequences of matrices (Theorems \ref{thm_h2} 
and \ref{thm_nn2}) can also be adapted to general sparsity patterns in 
a similar way. Note that the hypothesis that the matrices $A_n$ have 
uniformly bounded bandwidth should be replaced by the condition that 
the degree of each node of the graph associated with $A_n$ should be 
uniformly bounded by a constant independent of $n$. 

\section{Examples}

In this section we show the results of some experiments on the decay 
behavior of $f(A)$ for various choices of $f$ and $A$
and comparisons with \emph{a priori} decay bounds.
We consider matrices over commutative $C^*$-algebras
of continuous functions, block matrices,
and matrices over the noncommutative 
quaternion algebra. 

\subsection{Matrices over $C([a,b])$}

Here we consider simple examples of matrices over $C([a,b])$, 
the algebra of (complex-valued)
continuous functions defined on a closed real interval $[a,b]$. 

Let $A$ be such a matrix: each entry of $A$ can be written as 
$a_{ij}=a_{ij}(t)$, where $a_{ij}(t)\in C([a,b])$. Let $f(z)$ be 
a complex analytic function such that $f(A)$ is well defined. 
In order to compute 
$f(A)$ we consider two approaches.
\begin{enumerate}
\item
A symbolic (exact) approach, based on the integral definition 
\eqref{integral_def}. This approach goes as follows:
\begin{itemize}
\item
Assuming $z\notin \sigma(A)$, compute symbolically 
$M=f(z)(zI-A)^{-1}$. Recall that the entries 
of $M$ are meromorphic functions of $t$ and $z$. In particular, 
if $A$ is invertible the inverse $B=A^{-1}$ can be computed 
symbolically, and its entries are elements of $C([a,b])$.
\item
Compute ${\rm det} (M)$ and factorize it as a polynomial in $z$. 
The poles of the entries of $M$ are  roots of ${\rm det} (M)$ with 
respect to the variable $z$.
\item 
Apply the residue theorem: $[f(M)]_{ij}$ is the sum of the residues 
of $M_{ij}$ at the roots of ${\rm det} (M)$. Such residues can be 
computed via a Laurent series expansion: see for instance the Maple 
commands {\tt series} and {\tt residue}.
\end{itemize}
The norms $\|[f(A)]_{ij}\|_{\infty}$ can be computed symbolically 
(see for instance the Maple command {\tt maximize}) or numerically 
via standard optimization methods. 
The exact approach is rather time-consuming and can only be applied 
to moderate-sized matrices. 
\item
An approximate hybrid (numerical-symbolic) approach, 
based on polynomial approximation of 
$f(z)$. In the present work we employ the following technique:
\begin{itemize}
\item 
Compute the coefficients of the Chebyshev approximating polynomial 
$p(z)$ that approximates $f(z)$ up to a predetermined 
degree or tolerance. Here we use the function {\tt chebpoly} of the 
{\tt chebfun} package \cite{chebfun} for Matlab.
If necessary, scale and shift $A$ so that its spectrum is contained 
in $[-1,1]$.
\item
Symbolically compute $f(A)\approx p(A)$. 
\end{itemize}
This approach gives results that are virtually indistinguishable
from the exact (purely symbolic) approach, but it is much more efficient
and can be applied to relatively large matrices.
\end{enumerate}

\begin{example}\label{example1}
Let $C$ be the following bidiagonal Toeplitz matrix of size $n\times n$ 
over $C([1,2])$:
$$
C=\left[
\begin{array}{llll}
1 & {\rm e}^{-t} &    & \\
 & 1 & \ddots   & \\
 &  & \ddots &  {\rm e}^{-t} \\
 &  &  &1
\end{array}
\right]
$$

Obviously, $C$ has an inverse in $C([1,2])$, which can be expressed as
a (finite) Neumann series.
We compute $C^{-1}$ symbolically, using the Symbolic Math 
Toolbox of Matlab, and then we compute the $\infty$-norms of its elements 
using the Matlab function {\tt fminbnd}. Figure \ref{fig_ex1} shows the 
corresponding mesh plot of the matrix $[\|b_{ij}\|_{\infty}]$ with $B=C^{-1}$
for $n=20$. Note the rapid off-diagonal decay.
\end{example}

\begin{example}\label{example2}
Let $A=CC^T$, where $C$ is defined as in Example \ref{example1}. The 
inverse of $A$ can be computed symbolically as $A^{-1}=C^{-T}C^{-1}$. 
Figure  \ref{fig_ex2} shows the mesh plot of the matrix of infinity norms of 
elements of $A^{-1}$ for $n=20$.
\end{example}

Next we consider the matrix exponential.

\begin{example}\label{example3}
Let $A$ be a tridiagonal Toeplitz Hermitian matrix as in Example 
\ref{example2}. 
We first scale it so that its spectrum is contained in $[-1,1]$. 
This is done by replacing $A$ with $A/\|\hat{A}\|_2$, where $\hat{A}$ 
is the matrix of infinity norms of the entries of $A$.
Next, we compute an approximation of the exponential of $A$ as 
$${\rm e}^A\approx\sum_{j=0}^k c_jT_j(A),$$ where the coefficients 
$\{c_j\}_{j=0,\dots ,k}$ are computed numerically using the 
{\tt chebpoly} function of Chebfun \cite{chebfun}, 
and the matrices $T_j(A)$ are 
computed symbolically using the Chebyshev recurrence relation. 
Here we choose $n=20$ and $k=8$.
 See Figure \ref{fig_ex3} for the mesh plot of the matrix of norms of 
elements of ${\rm e}^A$.

Observe that $\|\sum_{j=0}^kc_jT_j(A)\|_{\infty}\leq\sum_{j=0}^k|c_j|$, 
so $|c_k|$ gives an estimate of the correction to the approximation 
due to the highest order term (see also \cite[Section 4.1]{BR07}). 
If this correction is sufficiently small, we can assume that the 
Chebyshev approximation is accurate enough for our purposes. In this 
example we have $c_{8}= 1.9921\cdot 10^{-7}$ and $c_9=1.1037\cdot 10^{-8}$.
\end{example}

\begin{figure}
\begin{center}
\includegraphics[width=0.7\textwidth]{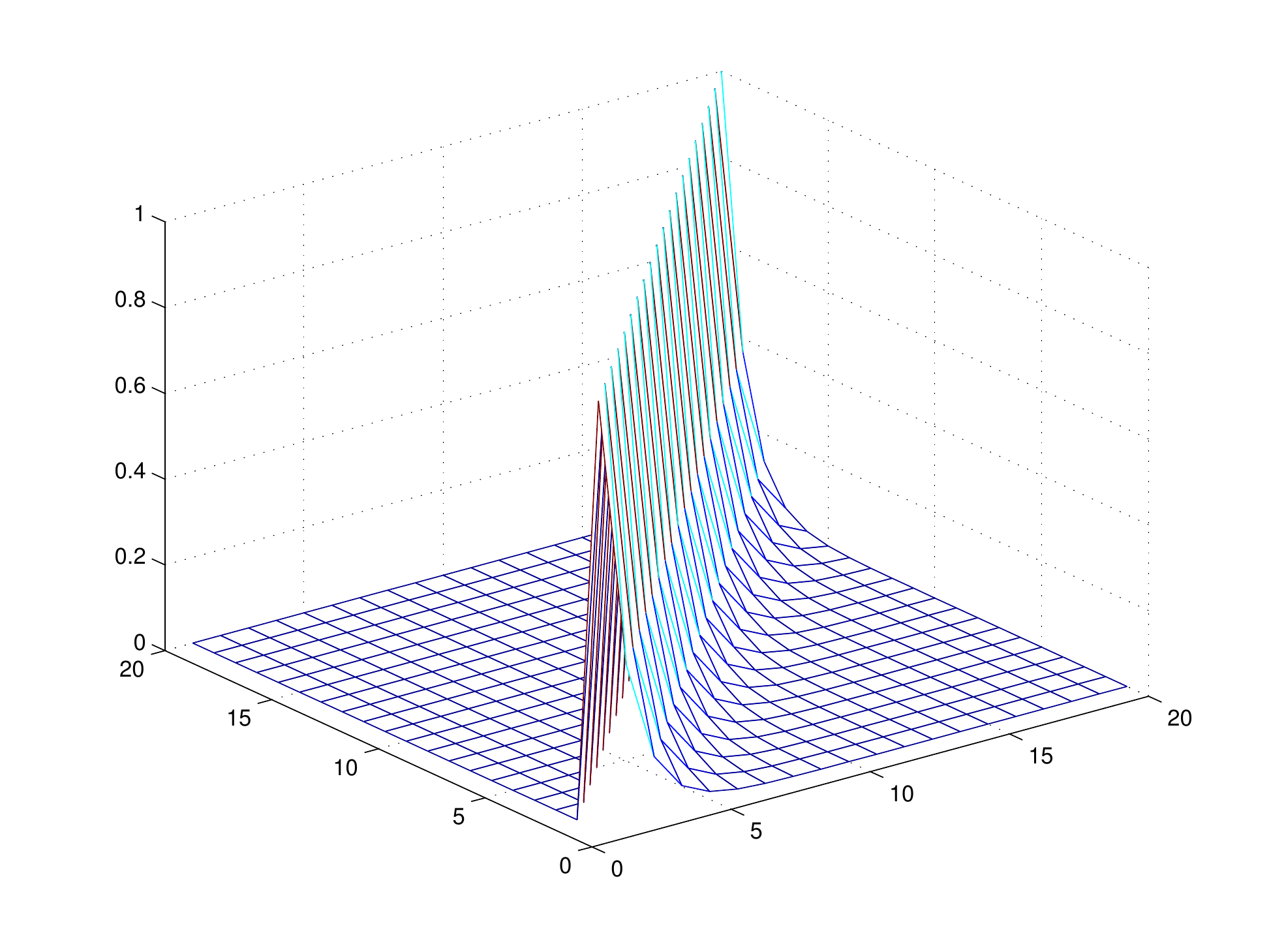}
\caption{Decay behavior for the inverse of the bidiagonal matrix 
in Example \ref{example1}.}\label{fig_ex1}
\end{center}
\end{figure}

\begin{figure}
\begin{center}
\includegraphics[width=0.7\textwidth]{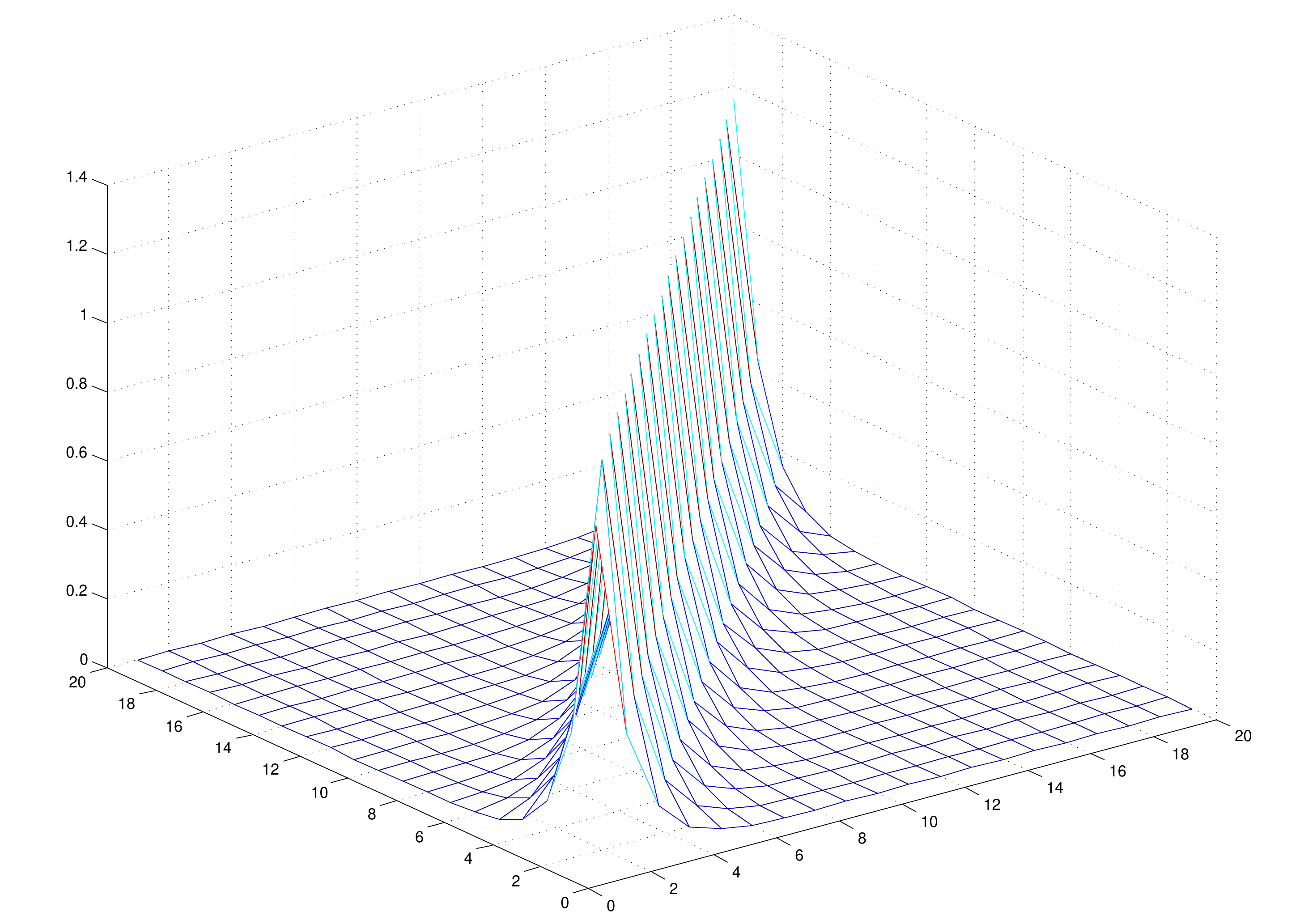}
\caption{Decay behavior for the inverse of the tridiagonal matrix 
in Example \ref{example2}.}\label{fig_ex2}
\end{center}
\end{figure}

\begin{figure}
\begin{center}
\includegraphics[width=0.7\textwidth]{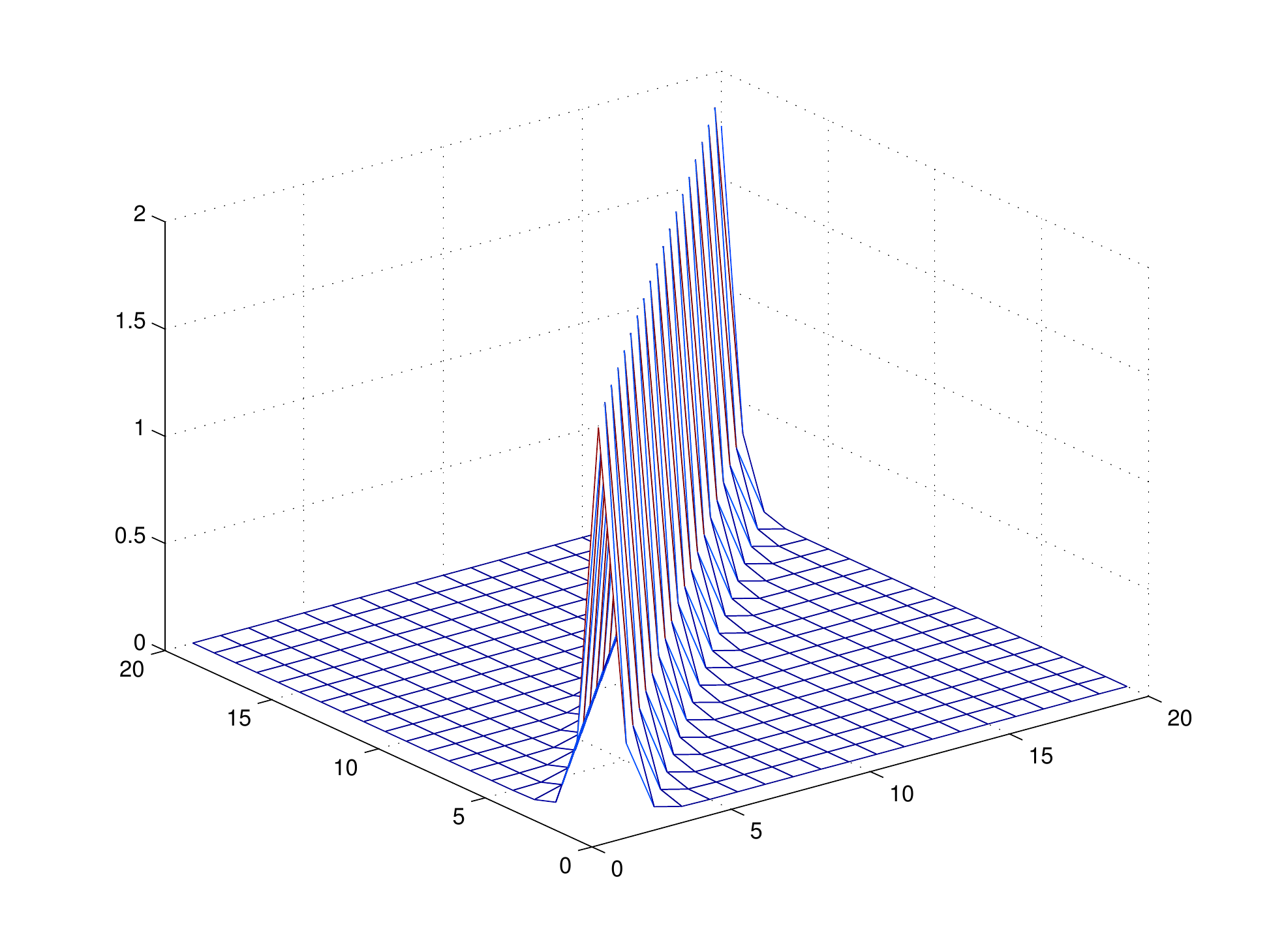}
\caption{Decay behavior for the exponential of the scaled 
tridiagonal matrix in Example \ref{example3}.}\label{fig_ex3}
\end{center}
\end{figure}

\begin{example}\label{example4}
Consider the tridiagonal Hermitian Toeplitz matrix of size 
$20\times 20$ over $C([0,1])$:
$$
A=\left[
\begin{array}{llll}
1 & {\rm e}^{-t} &    & \\
 {\rm e}^{-t} &1 & \ddots   & \\
 &  \ddots& \ddots &  {\rm e}^{-t} \\
 &  &  {\rm e}^{-t}&1
\end{array}
\right].
$$
We scale $A$ so that $\sigma(A)\subset [-1,1]$ and then
compute the Chebyshev approximation ${\rm e}^A\approx\sum_{j=0}^{12}c_j A^j$.
The approximation error is bounded in norm by $3.9913\cdot 10^{-14}$.
The decay behavior of ${\rm e}^A$ and the comparison with decay 
bounds for different choices of $\chi$ (cf.~Theorem \ref{thm_h2})
are shown in Figures \ref{fig_ex4a} and \ref{fig_ex4b}. 
The semi-logarithmic
plot clearly shows the superexponential decay in the entries
of the first row of ${\rm e}^A$, which is to be expected since
the coefficients $c_k$ in the Chebyshev expansion of ${\rm e}^z$
decay faster than exponentially as $k\to \infty$ \cite[page 96]{meinardus}.
In contrast, our bounds, being based on Bernstein's Theorem, only
decay exponentially. Nevertheless, for $\chi = 20$ the exponential 
bound decays so fast that for large enough column indices (say,
$j\approx 5$ or larger) it is very good for all practical purposes.
\end{example}


\begin{example}\label{example4bis}
Consider the tridiagonal Hermitian Toeplitz matrix $A=(a_{ij}(t))$ of size 
$20\times 20$ over $C([0,1])$ defined by 
\begin{eqnarray*}
&&a_{jj}=1,\qquad j=1,\ldots,20,\\
&&a_{j,j+1}=a_{j+1,j}=1, \qquad j=2k+1,k=1,\ldots,9,\\
&&a_{j,j+1}=a_{j+1,j}=t, \qquad j=4k+2,k=0,\ldots,4,\\
&&a_{j,j+1}=a_{j+1,j}=t^2-1, \qquad j=4k, k=0,\ldots,4.
\end{eqnarray*} 

We scale $A$ so that $\sigma(A)\subset [-1,1]$ and then
compute the Chebyshev approximation $f(A)\approx\sum_{j=0}^{14}c_j A^j$,
where $f(z)=\ln(z+5)$.
The approximation error is bounded in norm by $1.7509\cdot 10^{-14}$.
The decay behavior of $f(A)$, compared with decay 
bounds for different choices of $\chi$ (cf.~Theorem \ref{thm_h2}),
is shown in Figure \ref{fig_ex4bis}.  
The semi-logarithmic
plot clearly shows the exponential decay in the entries
of a row of $f(A)$. 
Note that the decay bounds are somewhat pessimistic in this case.
\end{example}


\begin{figure}
\includegraphics[width=0.52\textwidth]{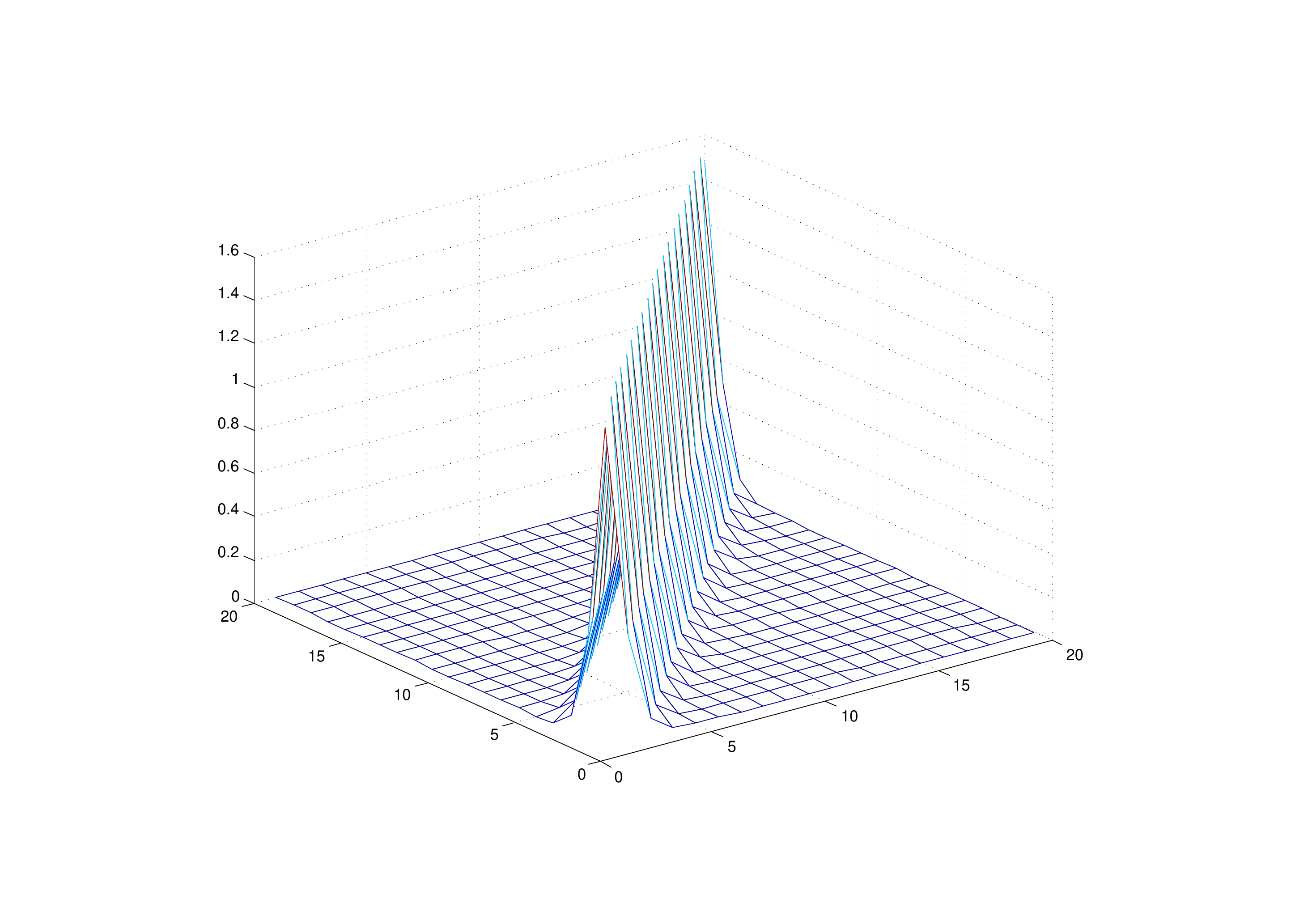}
\includegraphics[width=0.52\textwidth]{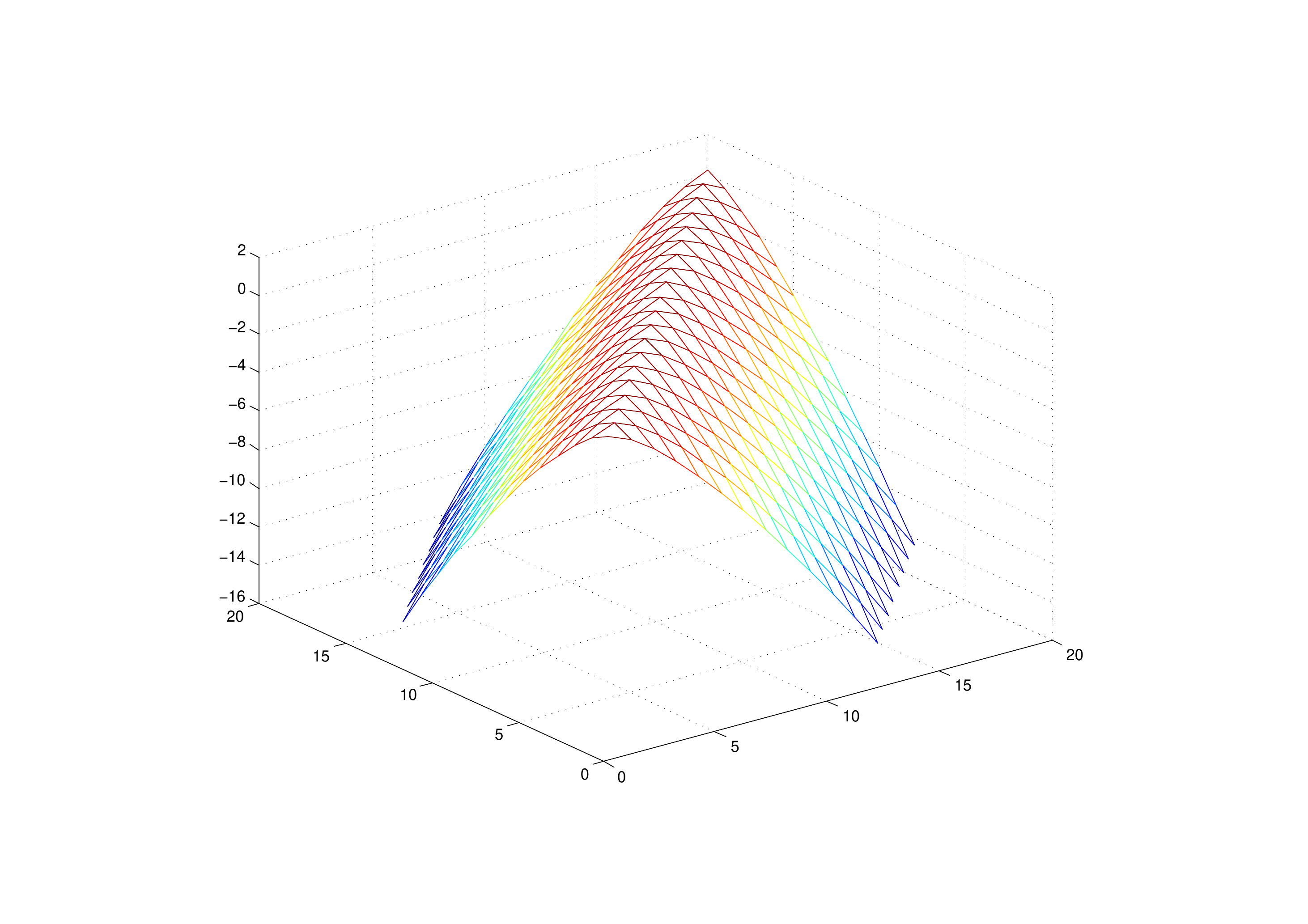}
\caption{Linear mesh plot (left) and $\log_{10}$ mesh plot (right) 
for ${\rm e}^A$ as in Example \ref{example4}.}\label{fig_ex4a}  
\end{figure}

\begin{figure}
\begin{center}
\includegraphics[width=0.8\textwidth]{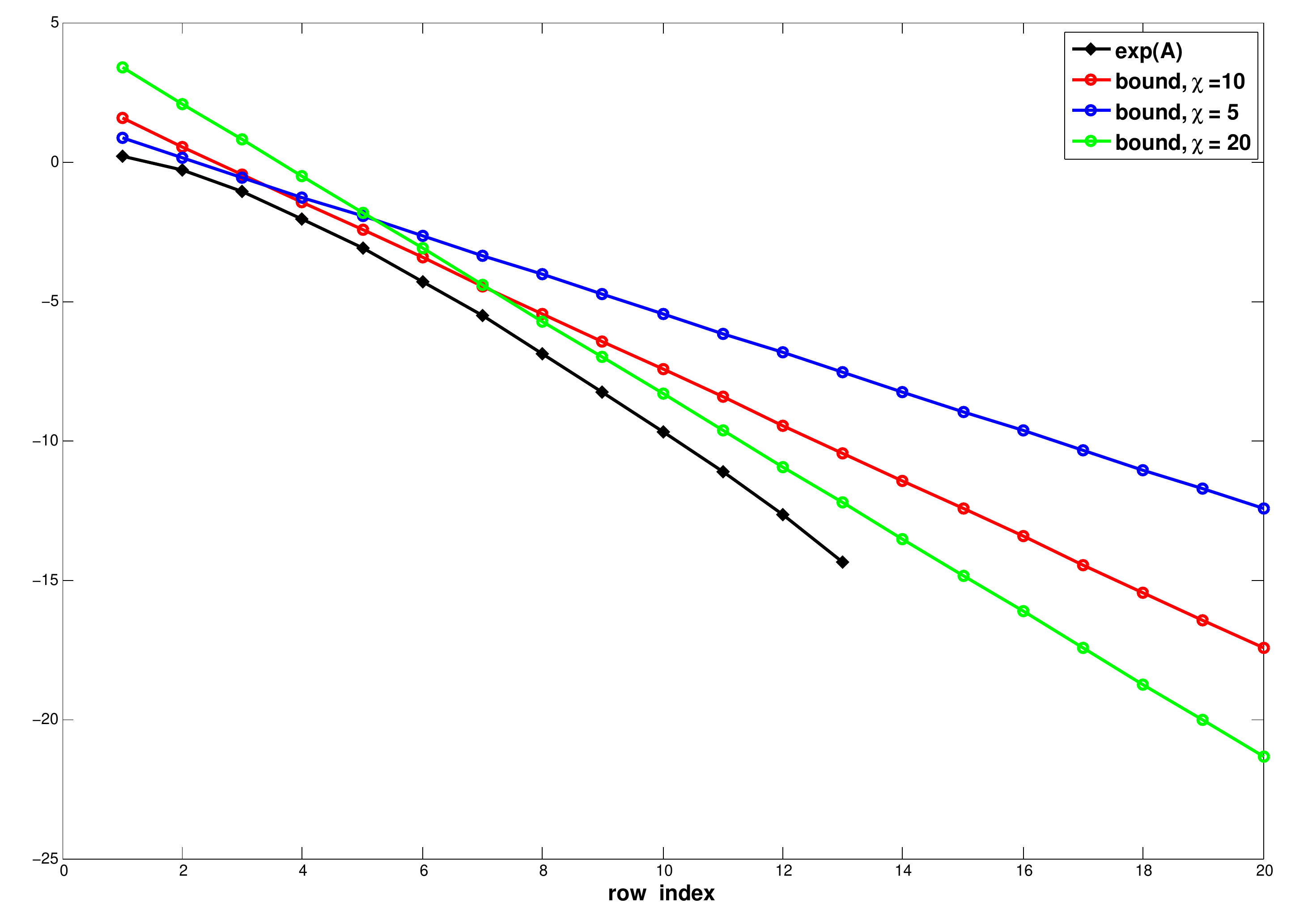}
\caption{Comparison between the first row, in norm, of ${\rm e}^A$ 
as in Example \ref{example4} and theoretical bounds, for 
several values of $\chi$. The vertical axis is shown in 
$\log_{10}$ scale.}\label{fig_ex4b} 
\end{center}
\end{figure}

\begin{figure}
\begin{center}
\includegraphics[width=0.8\textwidth]{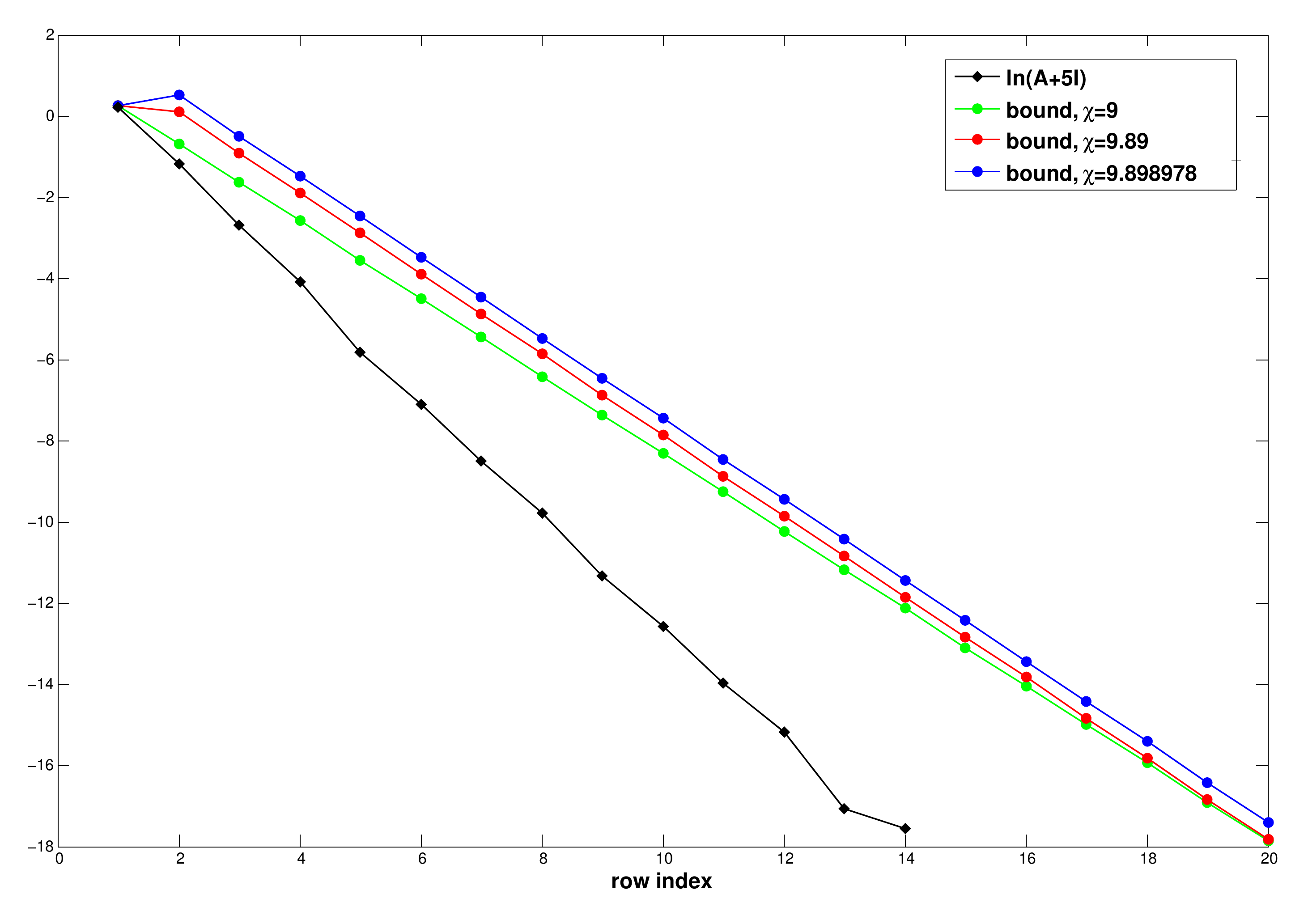}
\caption{Comparison between the 7th row, in norm, of $\ln(A+5I)$ 
as in Example \ref{example4bis} and theoretical bounds, for 
several values of $\chi$. The vertical axis is shown in 
$\log_{10}$ scale.}\label{fig_ex4bis} 
\end{center}
\end{figure}

\subsection{Block matrices}
If we choose $\mathcal{A}_0$ as the noncommutative $C^*$-algebra 
of $k\times k$ complex matrices, then $\mathcal{A}
=\mathbb{C}^{nk\times nk}$ can be 
identified with the $C^*$-algebra of $n\times n$ 
matrices with entries in $\mathcal{A}_0$. 

\begin{example}\label{ex_B}
Let $\mathcal{A}_0=\mathbb{C}^{5\times 5}$ and consider a 
banded non-Hermitian matrix $A$ of size $20\times 20$ with 
entries over $\mathcal{A}_0$. Thus, $A$ is $100\times 100$
as a matrix over $\mathbb{C}$. The entries of each 
block are chosen at random according to a uniform distribution over 
$[-1,1]$. The matrix $A$ has lower bandwidth $2$ and upper 
bandwidth $1$. 
Figure \ref{fig_B} shows the 
sparsity pattern of $A$ and the decay behavior of 
the spectral norms of the blocks of ${\rm e}^{A}$.  
\end{example}


\begin{figure}
\includegraphics[width=0.4\textwidth]{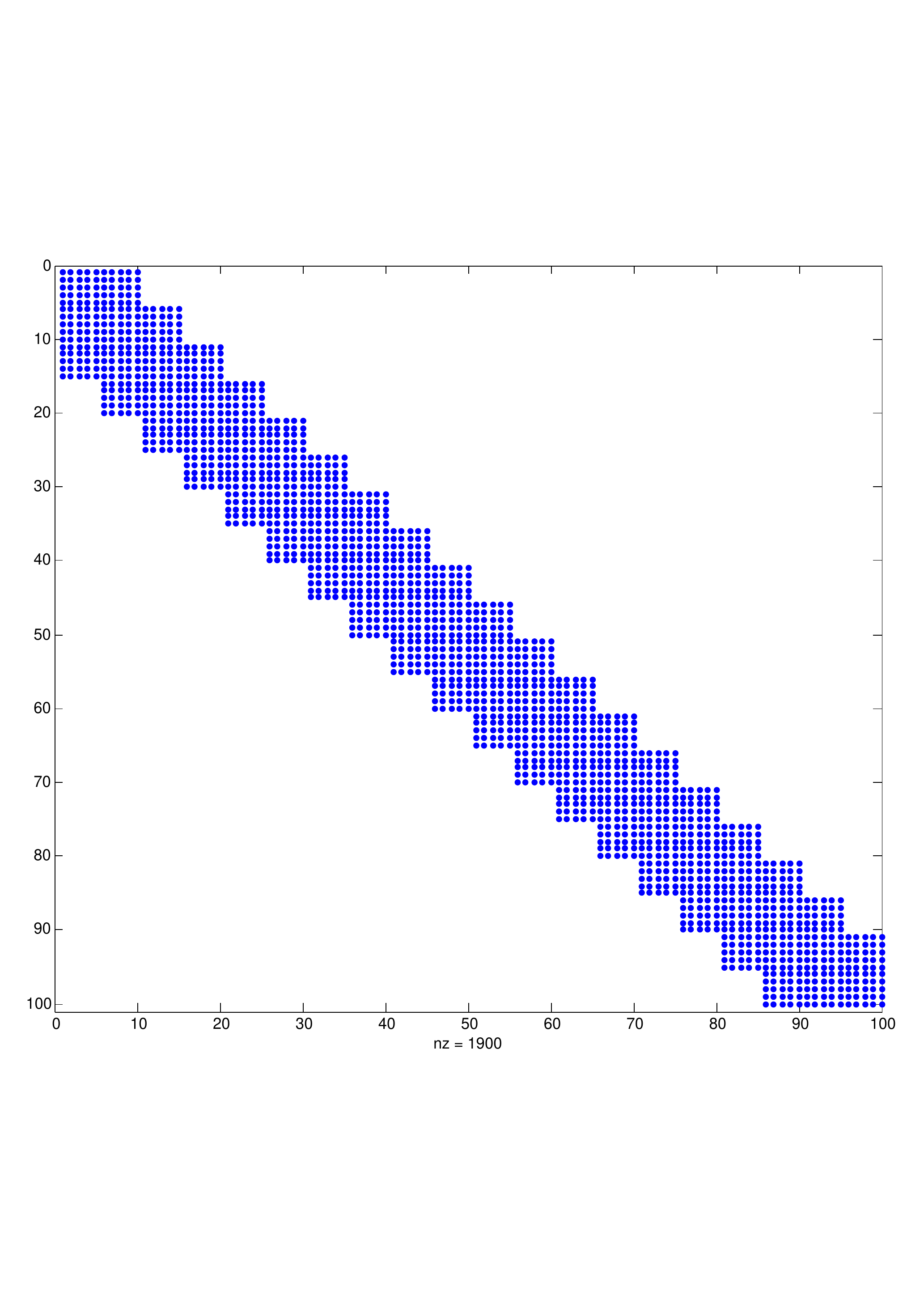}
\includegraphics[width=0.5\textwidth]{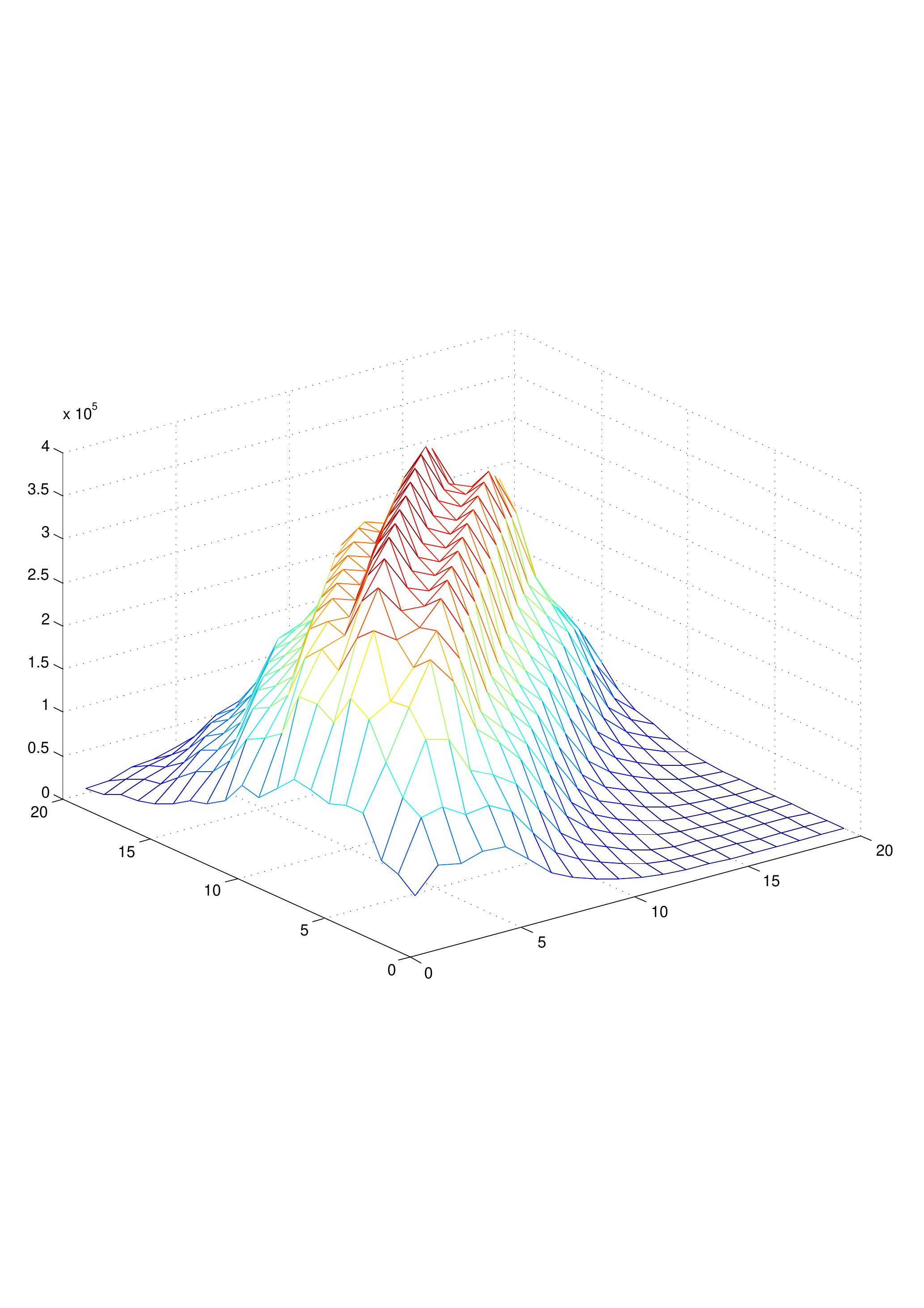}
\caption{Sparsity pattern of $A$ in Example 
\ref{ex_B} (left) and decay behavior in the norms of
the blocks of ${\rm e}^A$ (right).}\label{fig_B}
\end{figure}


\subsection{Matrices over quaternions}
As discussed in section \ref{quater},
we represent matrices over quaternions as complex block 
matrices with blocks of size $2\times 2$.

\begin{example}\label{ex_H1}
 In this example, $A\in\mathbb{H}^{50\times 50}$ is a Hermitian 
Toeplitz tridiagonal matrix with random entries, chosen from a 
uniform distribution over $[-5,5]$. We form the associated 
block matrix $\varphi (A)\in \mathbb{C}^{100\times 100}$, compute 
$f(\varphi (A))$ and convert it back to a matrix in  
$\mathbb{H}^{50\times 50}$. Figure \ref{fig_Hexplog} shows 
the mesh plot of the norms of entries of ${\rm e}^A$ and $\log A $.
\end{example}

\begin{example}\label{ex_H2}
Here we choose $A\in\mathbb{H}^{50\times 50}$ as a Hermitian 
matrix with a more general sparsity pattern and random nonzero 
entries. The decay behavior of ${\rm e}^A$ is shown in Figure 
\ref{fig_Hgeneral}
\end{example}

\begin{figure}
\includegraphics[width=0.55\textwidth]{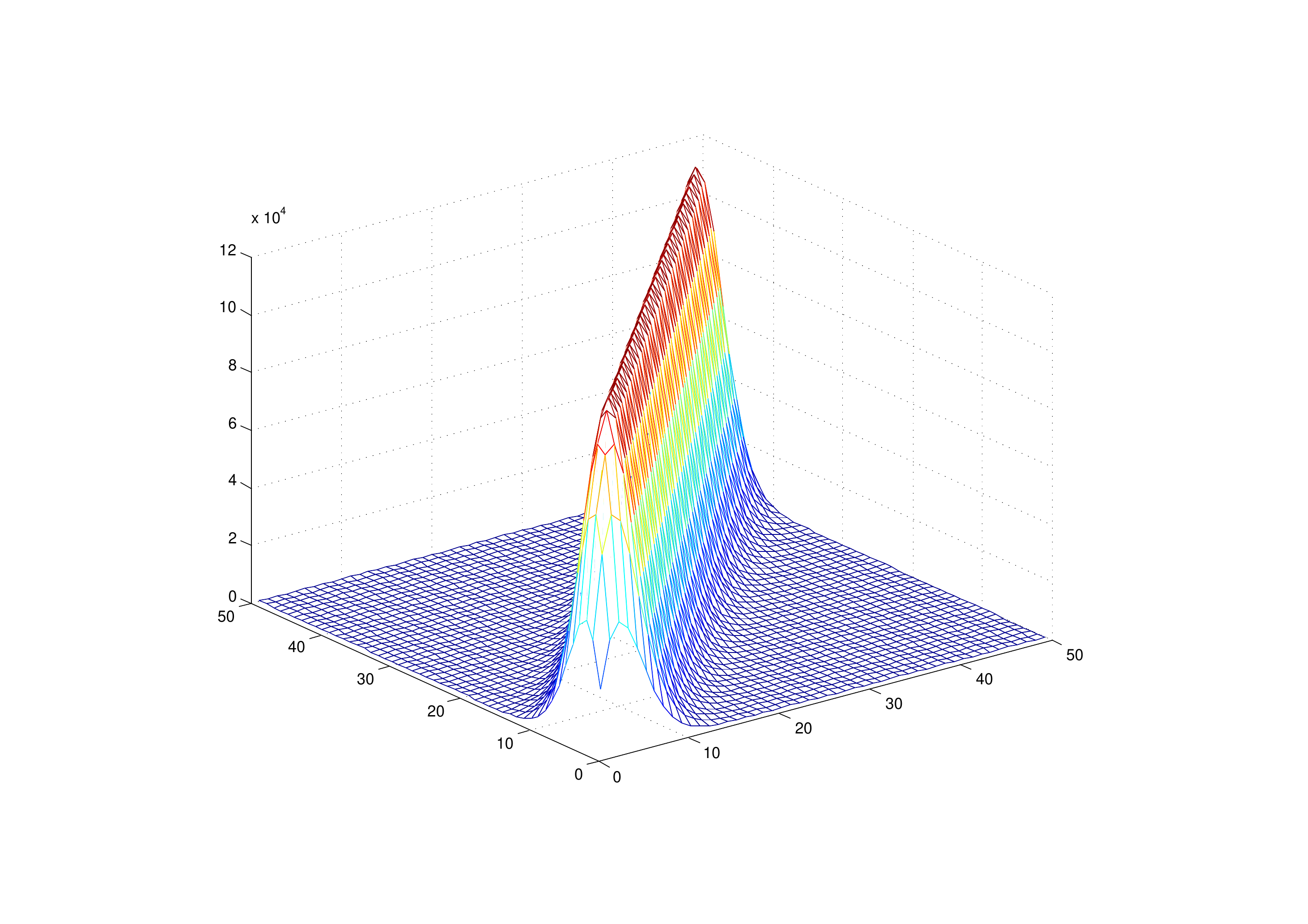}
\includegraphics[width=0.55\textwidth]{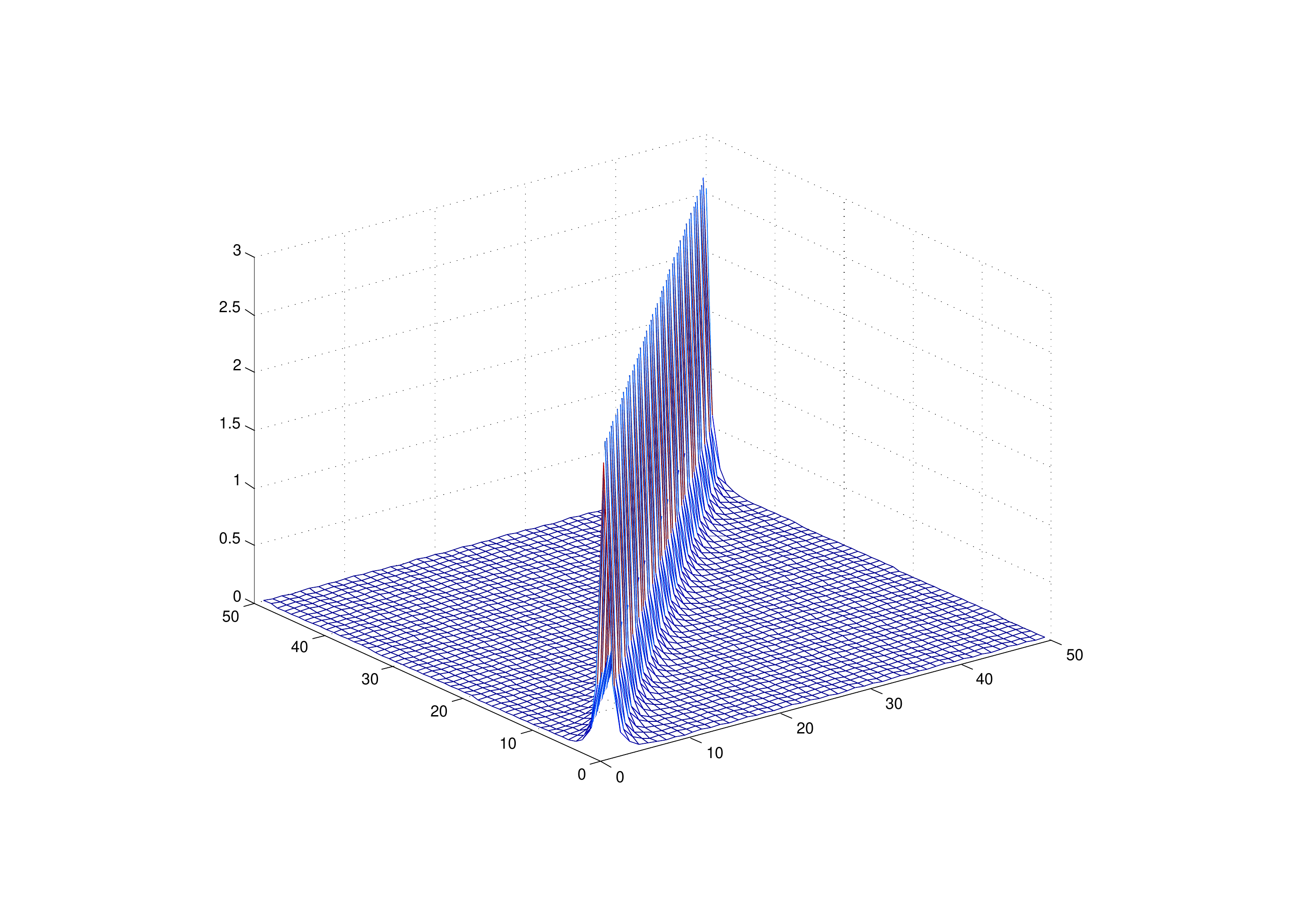}
\caption{Decay behavior for the exponential and the logarithm 
of a tridiagonal matrix over quaternions (Example \ref{ex_H1}).}
\label{fig_Hexplog}
\end{figure}
\begin{figure}
\includegraphics[width=0.45\textwidth]{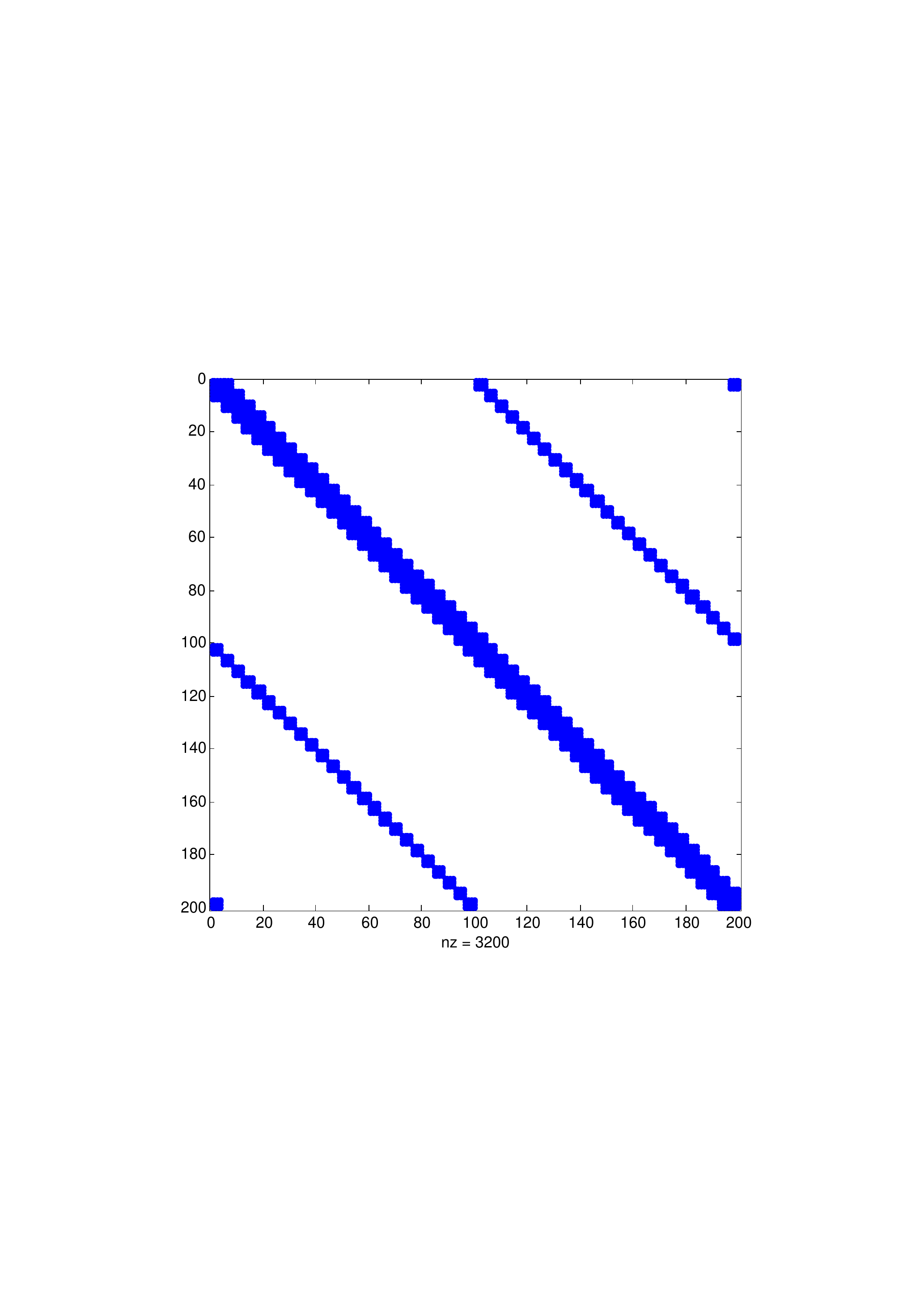}
\includegraphics[width=0.45\textwidth]{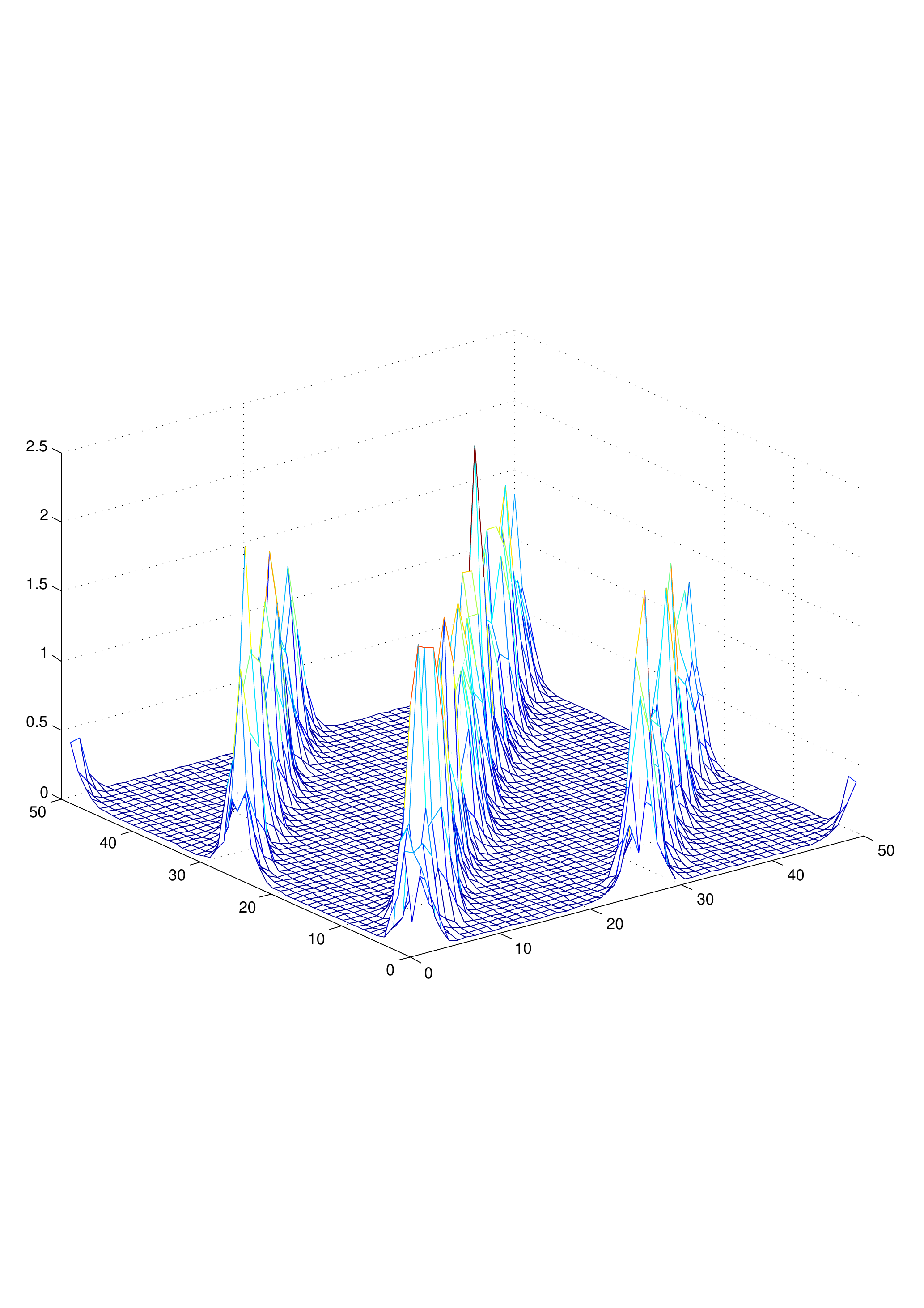}
\caption{Sparsity pattern of $A$ as in Example \ref{ex_H2} (left) 
and decay behavior for ${\rm e}^A$ (right).}\label{fig_Hgeneral}
\end{figure}

\section{Conclusions and outlook}

In this paper we have combined tools from classical approximation theory
with basic results from the general
theory of $C^*$-algebras 
to obtain decay bounds for the entries of
analytic functions of banded or sparse matrices with entries of a
rather general nature. The theory applies to functions of matrices
over the algebra of bounded linear operators on a Hilbert space,
over the algebra of continuous functions on a compact Hausdorff
space, and over the quaternion algebra, thus achieving a
broad generalization of existing exponential decay 
results for functions of matrices over the real or complex fields.
In particular, the theory shows that the exponential decay bounds 
can be extended 
{\em verbatim} to matrices over noncommutative and infinite-dimensional
$C^*$-algebras.

The results in this paper are primarily qualitative in nature,
and the bounds can be pessimistic in practice. This is the price 
one has to pay for the extreme generality of the theory. For entire
functions like the matrix exponential, sharper estimates 
(and superexponential decay bounds)
can be obtained by extending known bounds, such as those
found in \cite{BR09} and in \cite{iserles}.
Another avenue for obtaining more quantitative decay
results is the Banach algebra approach as found, e.g., in
\cite{KSW}.  This approach is quite different from ours.

Future work should address application of the theory
to the derivation of specialized bounds for particular
functions, such as the matrix exponential, and their use 
in problems from physics and numerical analysis.

\vspace{0.1in}

{\bf Acknowledgments.} We would like to thank Pierre--Louis Giscard
(Oxford University) for raising the question that stimulated us to 
write this paper, Bernd Beckermann (University of Lille I)
for relevant bibliographic
indications, and Marc Rybowicz (University of Limoges)
for useful discussions on 
symbolic computation of functions of matrices.

\end{document}